\theoremstyle{plain}
\newtheorem{conjecture}{Conjecture}
\newtheorem{corollary}{Corollary}
\newtheorem{lemma}{Lemma}
\newtheorem{proposition}{Proposition}
\newtheorem{remark}{Remark}
\newtheorem{theorem}{Theorem}
\numberwithin{equation}{section}
\newcommand{\ls}{\lesssim}
\newcommand{\gs}{\gtrsim}
\begin{document}

\title[]{Interior estimates for the eigenfunctions of the fractional Laplacian on a bounded euclidean domain  }

\author{Xiaoqi Huang}
\author{Yannick Sire}
\author{Cheng Zhang}

\begin{abstract}
This paper is devoted to interior, i.e. away from the boundary, estimates for eigenfunctions of the fractional Laplacian in an Euclidean domain of $\mathbb R^d$.
\end{abstract}
\maketitle
\tableofcontents

\section{Introduction}

This paper is devoted to the investigation of the following eigenvalue problem

\begin{equation}\label{eigen}
 \begin{aligned}
     (-\Delta)^{\alpha/2}e_\lambda&=\lambda e_\lambda  , \qquad  x\in \Omega \\
      e_\lambda&=0, \qquad x\in \mathbb{R}^d\setminus \Omega .
 \end{aligned}
 \end{equation}
  Here we denoted by $(-\Delta)^{\alpha/2}$ for $0<\alpha<2$ the Fourier multiplier of symbol $|\xi|^\alpha$ and by $\Omega$ a $C^{1,1}$ bounded domain in $\mathbb{R}^d$ for $d \geq 1$. The fractional laplacian is also defined pointwise by the principal value integral:
\[(-\Delta)^{\alpha/2}u(x)= c_{d,\alpha} \hspace{1mm} p.v. \int_{\mathbb{R}^d} \frac{u(x)-u(y)}{|x-y|^{d+\alpha}},\ x\in \mathbb{R}^d\]where $c_{d,\alpha}=\frac{2^{\alpha} \Gamma(\frac{d+\alpha}{2})}{\pi^{d / 2}|\Gamma(-\frac{\alpha}{2})|}
 \hspace{1mm} $ is a normalizing constant. We are interested in interior bounds for $e_\lambda$ in $L^p$ in terms of $\lambda$ for the range $0<\alpha<2$ and $p \geq 2$.  The previous spectral problem involves the so-called {\sl restricted} fractional Laplacian, i.e. the Fourier multiplier $|\xi|^\alpha$ whose domain is a Sobolev space of function vanishing outside $\Omega$.

 Well-known results (see for instance \cite{getoor,grubb,bookStable}) ensure that the spectrum is discrete and nonnegative, and the eigenfunctions are smooth inside the domain. We refer the reader to the survey \cite{frank} for several results and open problems related to eigenvalues and eigenfunctions of the restricted fractional Laplacian.

 It is well-known that on bounded domains (or manifolds with boundary) the issue of obtaining global $L^p$ bounds is a difficult task (see \cite{smithSogge} for instance). For the Laplace-Beltrami operator on closed manifolds, the first $L^p$ bounds were obtained by Sogge \cite{soggeJFA}.

 The case of the fractional Laplacian is largely open as far as bounds on eigenfunctions or spectral projectors are concerned. However, motivated by issues in quantum mechanics and statistical physics, various bounds for the eigenvalues have been investigated. We refer the reader to the nice survey \cite{frank} for an updated account on these spectral issues. See also \cite{stos}, \cite{kwasnicki}, \cite{DKK17} for the study of eigenfunctions on the intervals or unit balls. The book \cite{bookStable} gives a great account on the potential analysis of stable processes, the class of Levy processes in which the fractional laplacian is the simplest infinitesimal generator (see \cite{bertoin}). In particular, in Chapter $4$, Kulczycki raises explicitly several basic, yet open, questions. The present paper is a contribution towards this program.

 As a first step towards understanding the whole picture, we consider here interior estimates, i.e. $L^p(K)$ bounds for $K\subset\subset\Omega$ and $p \geq 2$. Hence  the regularity of $\Omega$ will not play a crucial role, except for some heat kernel bounds. The whole point of our study will be to reduce the estimate to a commutator estimate and get bounds for it. Despite the fact we are considering interior estimates, the problem being nonlocal in nature, the fact that the equation is set in a bounded domain (and not the whole of $\mathbb R^d$) induces several unavoidable difficulties. There are several ways to define the fractional on a bounded domain. The one we consider here is the so-called {\sl restricted fractional Laplacian} and is the first natural version one can imagine. The other one is called the {\sl spectral fractional Laplacian}, which is given by the spectral theorem and the eigenfunctions are the same as the one of the Dirichlet Laplacian. A last one, arising in probabilities, is the so-called {\sl regional fractional Laplacian} and is given by
 $$
p.v. \int_{\Omega} \frac{u(x)-u(y)}{|x-y|^{d+\alpha}}\,dy.
 $$
 Heat kernel bounds (see \cite{censored}) are known also for this operator; however this operator does not coincide locally with the restricted fractional Laplacian and our approach does not allow to deal with it. We refer the reader to \cite{BSV, BFV} for accounts on the various fractional Laplacians in domains.

Throughout  this paper, we use the following notations.  $A\ls B$ ($A\gs B$) means $A\le CB$ ($A\ge cB$) for some positive constants $C,\ c$ independent of $\lambda$. $A\approx B$ means $A\ls B$ and $A\gs B$. The constants may depend on the domain $\Omega$, the compact set $K$, and the fixed parameters $d,\ \alpha,\ p$, and it is possible to find out the explicit dependence on them. The norm $\|\cdot\|_p$ means the $L^p$ norm in the whole Euclidean space $\mathbb{R}^d$.

 \subsection*{Main results}

 Our main result is the following.
 \begin{theorem}\label{main}
 Let $\Omega\subset \mathbb{R}^d$ be a bounded $C^{1,1}$ domain. Let $K\subset\subset \Omega$ be a compact set. Consider the following cases:
 \begin{itemize}
 \item Either $d=1$, $\frac14\le \alpha<2$, $2\le p\le \infty$
 \item Or $d=1$, $0<\alpha<\frac14$, $2\le p\le \frac{2(1-2\alpha)}{1-4\alpha}$
 \item Or $d\ge2$, $\frac12< \alpha<2$, $2\le p\le \infty$
 \end{itemize}

 Then for $\lambda>1$ and $e_\lambda$ satisfying \eqref{eigen} we have
\[\|e_\lambda\|_{L^p(K)}\ls \lambda^{\sigma(d,\alpha,p)}\|e_\lambda\|_2\]
where
\[\sigma(d,\alpha,p)=\max\Big\{\frac{d-1}{2\alpha}(\frac12-\frac1p),\frac{d-1}{2\alpha}-\frac{d}{\alpha p}\Big\}.\]
The  constant here is independent of $\lambda$.
 \end{theorem}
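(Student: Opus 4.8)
The plan is to reduce interior $L^p$ bounds for $e_\lambda$ to $L^p$ bounds for the spectral projector (or a smoothed version of it) associated with the ordinary fractional powers of the Laplacian on $\mathbb{R}^d$, where such bounds are known from the work of Sogge and its generalizations. The key conceptual point stated in the introduction is that the difficulty is *nonlocality*: although we only want estimates on $K \subset\subset \Omega$, the operator $(-\Delta)^{\alpha/2}$ "sees" the values of $e_\lambda$ everywhere, and $e_\lambda$ is only known to vanish outside $\Omega$, not to be smooth or small there. So the heart of the argument is a commutator estimate that localizes the equation.

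Concretely, I would pick a cutoff $\chi \in C_c^\infty(\Omega)$ with $\chi \equiv 1$ on a neighborhood of $K$, and write $u = \chi e_\lambda$. Then $u$ is compactly supported in $\mathbb{R}^d$ and one computes
\[
(-\Delta)^{\alpha/2} u = \chi (-\Delta)^{\alpha/2} e_\lambda + \big[(-\Delta)^{\alpha/2}, \chi\big] e_\lambda = \lambda u + \big[(-\Delta)^{\alpha/2}, \chi\big] e_\lambda,
\]
using the eigenvalue equation on $\mathrm{supp}\,\chi \subset \Omega$. Thus $u$ is a near-eigenfunction of the \emph{global} fractional Laplacian on $\mathbb{R}^d$ with error term $F_\lambda := [(-\Delta)^{\alpha/2},\chi] e_\lambda$. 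Next I would invert: using a spectral cutoff $\beta(\,\cdot\,/\lambda^{1/\alpha})$ adapted to frequencies $|\xi| \approx \lambda^{1/\alpha}$ (so that $|\xi|^\alpha \approx \lambda$), one writes $\chi e_\lambda = \beta(|D|/\lambda^{1/\alpha}) u + (\text{low/high frequency remainders})$, and the known global spectral-cluster estimate (of Sogge type) for $(-\Delta)^{\alpha/2} = (|D|^2)^{\alpha/2}$ gives
\[
\|\beta(|D|/\lambda^{1/\alpha}) u\|_{L^p(\mathbb{R}^d)} \ls \lambda^{\sigma(d,\alpha,p)}\big(\|u\|_{L^2} + \lambda^{-1}\|F_\lambda\|_{L^2} + \dots\big).
\]
The exponent $\sigma$ is exactly the Sogge exponent $\max\{\frac{d-1}{2}(\frac12-\frac1p), \frac{d-1}{2}-\frac dp\}$ for $\sqrt{-\Delta}$, rescaled: replacing the spectral parameter $\sqrt{-\Delta} \sim \mu$ by $\mu = \lambda^{1/\alpha}$ converts $\mu^{\sigma_{\mathrm{Sogge}}}$ into $\lambda^{\sigma_{\mathrm{Sogge}}/\alpha}$, which is the stated $\sigma(d,\alpha,p)$. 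So modulo the commutator, the theorem is just rescaled Sogge.

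**The main obstacle** — and where the case restrictions on $\alpha$, $p$, $d$ come from — is controlling $F_\lambda = [(-\Delta)^{\alpha/2},\chi] e_\lambda$. Unlike the local case $\alpha = 2$, where $[\Delta,\chi]$ is a first-order differential operator supported in $\mathrm{supp}\,\nabla\chi$, here the commutator is a \emph{nonlocal} operator of order $\alpha - 1$: morally $[(-\Delta)^{\alpha/2},\chi] \approx (\nabla\chi)\cdot(-\Delta)^{(\alpha-1)/2} + (\text{lower order})$, but with an integral kernel that decays only polynomially, so it picks up contributions from $e_\lambda$ on all of $\Omega$ (including near $\partial\Omega$, where we have no good pointwise control). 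I would estimate $\|F_\lambda\|_2$ by splitting the kernel into a near-diagonal piece (handled by fractional-integration / Hardy-type inequalities, costing a power of $\lambda$ no worse than $\lambda^{(\alpha-1)/\alpha}$ on a smoothed-out frequency band, i.e. losing essentially one derivative compared to a genuine eigenfunction) and a far piece (a bounded smoothing operator, giving $\|e_\lambda\|_2$ with no loss). The requirement $\alpha > \tfrac12$ for $d \ge 2$ is presumably exactly what is needed so that the $\lambda$-loss from the commutator, $\lambda^{\text{something}/\alpha}$, stays below the Sogge gain $\lambda^{\sigma}$ across the full range $2 \le p \le \infty$; in $d=1$ the Sogge exponent is weaker (the "$(d-1)$" factor vanishes for the first branch), so one can only afford the commutator loss for $\alpha \ge \tfrac14$, and for $0<\alpha<\tfrac14$ only up to the indicated critical $p$. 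Finally one has to bootstrap/absorb: $\|F_\lambda\|_2$ involves $e_\lambda$ itself, and since we are estimating $\|e_\lambda\|_{L^p(K)}$ in terms of $\|e_\lambda\|_2$, one must take care that the commutator term is controlled by $\|e_\lambda\|_2$ (and perhaps $\|e_\lambda\|_{L^2}$ on a slightly larger compact set via interior elliptic regularity / known interior smoothing for \eqref{eigen}), not by the very $L^p(K)$ norm we are bounding. I expect the argument closes by iterating the cutoff on a nested sequence of compacts $K \subset K_1 \subset K_2 \subset\subset \Omega$ and summing a geometric series in the frequency-localized pieces.
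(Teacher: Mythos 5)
Your overall framework -- cut off with $\chi$, observe that $\chi e_\lambda$ is a near-eigenfunction of the global $(-\Delta)^{\alpha/2}$ with error $F_\lambda=[(-\Delta)^{\alpha/2},\chi]e_\lambda$, then invert near the spectral parameter using sharp $\mathbb{R}^d$ estimates (the paper uses the resolvent $((-\Delta)^{\alpha/2}-z)^{-1}$ with $z=\lambda+\lambda^{1-1/\alpha}i$ and the Kwon--Lee bounds, together with the Dirichlet heat semigroup at time $t=\lambda^{-1}$ to relocalize to $K$) -- does match Section 2 of the paper. But the proposal has a genuine gap at exactly the point you flag as ``the main obstacle'': you never actually prove the commutator estimate $\|F_\lambda\|_2\ls\lambda^{1-1/\alpha}\|e_\lambda\|_2$, and the mechanism you guess for it is not the right one. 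Your heuristic $[(-\Delta)^{\alpha/2},\chi]\approx(\nabla\chi)(-\Delta)^{(\alpha-1)/2}$ reduces matters to $\|e_\lambda\|_{H^{\alpha-1}}\ls\lambda^{1-1/\alpha}\|e_\lambda\|_2$; the paper proves this only for $1\le\alpha<2$, and it requires the gradient heat-kernel estimate of Kulczycki--Ryznar plus a Schur test that carefully accounts for the boundary layer -- it is stated as an open problem for $\alpha<1$. For $\frac12<\alpha<1$ the paper instead writes $e_\lambda=\lambda^{-1}(-\Delta)^{\alpha/2}e_\lambda+Te_\lambda$, proves the nontrivial global identity $\|(-\Delta)^{\alpha/2}e_\lambda\|_{L^2(\mathbb{R}^d)}\approx\lambda\|e_\lambda\|_2$ (Lemma \ref{lemma5}, again via two-sided heat kernel bounds), interpolates, and controls the piece $Te_\lambda$ supported in $\Omega^c$ by $\int_\Omega\rho(z)^{-\alpha}|e_\lambda(z)|\,dz$. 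The restriction $\alpha>\frac12$ for $d\ge2$ comes from the convergence of $\int_\Omega\rho^{-2\alpha}$ near $\partial\Omega$ in this last step -- i.e.\ from the nonlocal tail seeing the boundary, where $e_\lambda$ has no pointwise control -- not, as you suggest, from comparing the commutator loss against the Sogge gain. Your proposed near/far kernel splitting would founder precisely on this boundary contribution.

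The second gap is the one-dimensional range $0<\alpha<\frac12$ (including $\frac14\le\alpha<\frac12$, which is part of the theorem). Your explanation -- that in $d=1$ the Sogge exponent is weaker so one can ``afford'' the commutator loss down to $\alpha=\frac14$ -- does not correspond to any step of the actual argument and would not close: the commutator estimate \eqref{comm} is simply not available there. The paper instead imports Kwa\'snicki's explicit approximate eigenfunctions $\tilde e_{\lambda_n}$ on the interval, with the error bounds $\|e_{\lambda_n}-\tilde e_{\lambda_n}\|_2\ls\lambda_n^{-1}$ and $\|(-\Delta)^{\alpha/2}\tilde e_{\lambda_n}-\mu_n^\alpha\tilde e_{\lambda_n}\|_\infty\ls\lambda_n^{-1/\alpha}$, runs the resolvent argument on the difference $u=e_{\lambda_n}-\tilde e_{\lambda_n}$, and the threshold $\alpha\ge\frac14$ (and the critical exponent $p_1=\frac{2(1-2\alpha)}{1-4\alpha}$ below it) emerges from the final bound $1+\lambda^{\frac1{2\alpha}-2}$. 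This approximation ingredient is entirely absent from your proposal, so the $d=1$ cases of the theorem are not reachable by your route. A minor further point: no bootstrap or geometric series over nested compacts is needed; the commutator term is bounded directly by $\|e_\lambda\|_2$, and a single cutoff plus the heat-semigroup localization suffices.
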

 In other words, if $p_c=\frac{2d+2}{d-1}$, then
\[\|e_\lambda\|_{L^p(K)}\ls \lambda^{\frac{d-1}{2\alpha}(\frac12-\frac1p)}\|e_\lambda\|_2,\ 2\le p\le p_c\]
\[\|e_\lambda\|_{L^p(K)}\ls \lambda^{\frac{d-1}{2\alpha}-\frac{d}{\alpha p}}\|e_\lambda\|_2,\ p_c\le p\le \infty.\]
In particular, it gives the uniform bound in dimension $d=1$ for $\lambda>1$
\[\|e_\lambda\|_{L^p(K)}\ls\|e_\lambda\|_2, \ 2\le p\le \infty.\]
These results agree with Sogge's $L^p$ estimates when $\alpha=2$ (see \cite{soggeJFA}) and a conjecture by Kwa\'snicki when $d=1$ (see  \cite{kwasnicki} and \cite{stos}). Indeed, Kwa\'snicki conjectured that for $0<\alpha<2$, the one dimensional eigenfunctions are uniformly bounded on $\Omega$:
\[\|e_\lambda\|_{L^\infty(\Omega)}\ls\|e_\lambda\|_2\]
where the constant is independent of $\lambda$.
Kwa\'snicki proved it for $\frac12\le \alpha<2$ (see also \cite{stos} for $\alpha=1$). Our Theorem \ref{main} gives uniform bounds on the interior $L^p$ estimates when $0< \alpha<2$, which provides evidence for this conjecture in the whole range of $\alpha$. Furthermore, it seems natural to make the following conjecture:
\begin{conjecture}\label{conj}Let $d\ge1$, $0<\alpha<2$, $2\le p\le \infty$. Let $\Omega\subset \mathbb{R}^d$ be a bounded $C^{1,1}$ domain. Let $K\subset\subset \Omega$ be a compact set. Then for $\lambda>1$ and $e_\lambda$ satisfying \eqref{eigen} we have
\[\|e_\lambda\|_{L^p(K)}\ls \lambda^{\sigma(d,\alpha,p)}\|e_\lambda\|_2\]
where
\[\sigma(d,\alpha,p)=\max\Big\{\frac{d-1}{2\alpha}(\frac12-\frac1p),\frac{d-1}{2\alpha}-\frac{d}{\alpha p}\Big\}.\]
The  constant here is independent of $\lambda$.
\end{conjecture}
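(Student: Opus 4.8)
The strategy is to reduce the eigenfunction bound to a bound on the heat semigroup acting on $e_\lambda$, and then to control the difference between the semigroup of the restricted fractional Laplacian and the free one by a commutator-type argument. Since $(-\Delta)^{\alpha/2} e_\lambda = \lambda e_\lambda$ inside $\Omega$, if $P_t = e^{-t(-\Delta)^{\alpha/2}_\Omega}$ denotes the Dirichlet (restricted) heat semigroup, then $P_t e_\lambda = e^{-t\lambda} e_\lambda$, so for any fixed compact $K \subset\subset \Omega$ and any admissible exponent $p$,
\[
\|e_\lambda\|_{L^p(K)} = e^{t\lambda}\|P_t e_\lambda\|_{L^p(K)} \le e^{t\lambda}\|\chi_K P_t(\chi_\Omega e_\lambda)\|_p .
\]
Choosing $t \approx \lambda^{-1}$, the prefactor $e^{t\lambda}$ is harmless, and it remains to understand the mapping properties of $\chi_K P_t$ from $L^2$ to $L^p$ at time scale $t \approx \lambda^{-1}$. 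If one could replace $P_t$ by the free semigroup $e^{t\Delta^{\alpha/2}}$ one would get exactly the classical heat-kernel smoothing that, after optimizing over the relevant multiplier/Sobolev embeddings, produces the exponent $\sigma(d,\alpha,p)$ — this is the same mechanism underlying Sogge's estimates in the $\alpha=2$ case, now with the parabolic scaling $|\xi|^\alpha \sim t^{-1}$ instead of $|\xi|^2 \sim t^{-1}$, which is the source of the $2\alpha$ in the denominators.

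The key difficulty, and the reason the theorem is conjectured rather than proven in full, is that $P_t$ is \emph{not} the free semigroup: the Dirichlet condition outside $\Omega$ forces a correction. The plan is to write $P_t = e^{t\Delta^{\alpha/2}} + R_t$ where $R_t$ encodes the boundary effect, and to estimate $\chi_K R_t$ separately. Concretely, Duhamel's formula against the generator gives $R_t$ as a time integral involving the commutator $[\chi_\Omega,(-\Delta)^{\alpha/2}]$ (or equivalently the difference of heat kernels $p^\Omega_t(x,y) - p_t(x-y)$, for which one has good off-diagonal Gaussian-type bounds from \cite{censored}, \cite{bookStable} using the $C^{1,1}$ regularity of $\Omega$). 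Because $K$ is compactly contained in $\Omega$, the kernel of $\chi_K R_t$ is supported away from where $e_\lambda$ is singular in a useful sense: the correction term only sees the values of $e_\lambda$ near $\partial\Omega$ through a nonlocal tail of size $\approx t/\mathrm{dist}(x,\partial\Omega)^{d+\alpha}$, and this tail, being smooth and of lower order on $K$, can be absorbed. The commutator estimate alluded to in the introduction ("reduce the estimate to a commutator estimate and get bounds for it") is precisely the bound $\|\chi_K R_{\lambda^{-1}} (\chi_\Omega e_\lambda)\|_p \ls \lambda^{\sigma}\|e_\lambda\|_2$, proved via the kernel bounds plus Young/Schur estimates.

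The main obstacle I expect is controlling this commutator/correction term uniformly across the full range $p \in [2,\infty]$ and, in higher dimensions, down to small $\alpha$: the heat-kernel difference bounds degrade as $\alpha \to 0$ (the nonlocal tail becomes heavier), and near $p = \infty$ one needs the sharp $L^2 \to L^\infty$ smoothing of $\chi_K P_t$, which requires not just a Schur/Young bound on $R_t$ but a bound that matches the free exponent $\frac{d}{\alpha p}\big|_{p=\infty} = 0$, i.e. genuine boundedness with no loss. This is why the theorem in the excerpt restricts to $d=1$ with $\alpha \ge \frac14$ (and a worse $p$-range below $\frac14$), and to $d \ge 2$ with $\alpha > \frac12$: these are exactly the regimes where the commutator/kernel-difference estimate can be closed. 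Removing these restrictions — upgrading the commutator bound so that it no longer costs more than the free term for all $0<\alpha<2$ and all $d$ — is the content of Conjecture \ref{conj}, and would presumably require a finer microlocal decomposition of $R_t$ (e.g. separating the "elliptic" part of $(-\Delta)^{\alpha/2}$ near $K$ from the genuinely nonlocal part, or exploiting cancellation in the time integral) rather than the crude kernel-size estimates that suffice in the stated range. So my proposal reduces to: (i) the semigroup identity and choice $t \approx \lambda^{-1}$; (ii) the free heat-kernel $L^2\to L^p$ bound giving $\lambda^{\sigma}$; (iii) the kernel-difference estimate for $\chi_K R_t$ from \cite{censored},\cite{bookStable}; and (iv) an interpolation/optimization to glue the two $p$-regimes at $p_c = \frac{2d+2}{d-1}$ — with step (iii), in the full generality of the conjecture, being the crux.
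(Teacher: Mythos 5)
The statement you are asked to prove is labelled a \emph{conjecture} in the paper, and the authors do not prove it in full: Theorem \ref{main} establishes it only for $d=1$, $\alpha\ge\frac14$ (all $p$), for $d=1$, $\alpha<\frac14$ with $p\le\frac{2(1-2\alpha)}{1-4\alpha}$, and for $d\ge2$, $\alpha>\frac12$, and Theorem \ref{th:commut} merely reduces the conjecture to the commutator bound \eqref{comm}. Your proposal, by your own admission, also leaves its ``crux'' (step (iii)) unproved, so it is at best a reduction, not a proof. That by itself would be acceptable as a strategy outline, but the reduction contains a genuine quantitative error at step (ii), which is where the exponent $\sigma(d,\alpha,p)$ is supposed to come from.

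Concretely: the free (or Dirichlet) fractional heat semigroup at time $t\approx\lambda^{-1}$ maps $L^2\to L^p$ with norm $\approx t^{-\frac{d}{\alpha}(\frac12-\frac1p)}=\lambda^{\frac{d}{\alpha}(\frac12-\frac1p)}$; this is exactly the bound \eqref{Q} the paper proves for $Q$ by Young's inequality, and it is strictly larger than $\sigma(d,\alpha,p)$ for every $p>2$ (e.g.\ at $p=\infty$ it gives $\frac{d}{2\alpha}$ versus the conjectured $\frac{d-1}{2\alpha}$). Heat-kernel smoothing is blind to the curvature of the sphere $\{|\xi|=\lambda^{1/\alpha}\}$, and Sogge's $\alpha=2$ bounds are \emph{not} obtained this way either. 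In the paper the gain from $\frac{d}{\alpha}(\frac12-\frac1p)$ down to $\frac{d-1}{2\alpha}(\frac12-\frac1p)$ comes entirely from the resolvent factor: one inverts $((-\Delta)^{\alpha/2}-z)^{-1}$ with the carefully tuned spectral parameter $z=\lambda+i\lambda^{1-\frac1\alpha}$, rewrites it via the identity from \cite{MSbook} in terms of $(-\Delta-z^{2/\alpha})^{-1}$, and invokes the sharp Kwon--Lee resolvent estimates (Lemma \ref{lemma1}), in which the factor $\operatorname{dist}(z/|z|,[0,\infty))^{\frac{d+1}{2}(\frac12-\frac1p)-1}$ encodes the curvature gain; the heat semigroup $Q$ is used only as a crude localization device. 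Your scheme has no counterpart to this step, so even if your kernel-difference estimate for $R_t$ were established, the argument would only yield the exponent $\frac{d}{\alpha}(\frac12-\frac1p)$ (essentially Proposition \ref{prop2}'s weaker bounds), not $\sigma(d,\alpha,p)$. A secondary issue: you commute with the sharp cutoff $\chi_\Omega$, whereas the paper's commutator uses a smooth interior cutoff $\beta$ supported well inside $\Omega$; the sharp-cutoff commutator is substantially more singular and is not the object the paper's machinery controls.
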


Theorem \ref{main} answers this conjecture except in the following two cases:

\begin{enumerate}
\item $d=1$, $0<\alpha<\frac14$, $\frac{2(1-2\alpha)}{1-4\alpha}< p\le \infty$\\
\item $d\ge2$, $0< \alpha\le \frac12$, $2< p\le \infty$.
\end{enumerate}
In these ranges of parameters, we are able to prove some $L^p$ estimates weaker than the conjecture bounds. See Proposition \ref{prop2}, Section 4 and Section 5. In particular, when $\alpha=\frac12$, our $L^p$ estimates agree with the conjecture bounds up to some log factors.
\begin{theorem}\label{main2}
Let $d\ge2$, $\alpha=\frac12$ and $2\le p\le \infty$. Let $\Omega\subset \mathbb{R}^d$ be a bounded $C^{1,1}$ domain. Let $K\subset\subset \Omega$ be a compact set.
Then for $\lambda>1$ and $e_\lambda$ satisfying \eqref{eigen} we have
\[\|e_\lambda\|_{L^p(K)}\ls \lambda^{\sigma(d,\alpha,p)}(\log\lambda)^{\gamma(d,p)}\|e_\lambda\|_2\]
where $\gamma(d,p)=\min\{\frac14,\frac{d+1}4(\frac12-\frac1p)\}$.
The  constant here is independent of $\lambda$.
\end{theorem}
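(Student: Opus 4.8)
The plan is to prove Theorem~\ref{main2} by the commutator reduction that underlies Theorem~\ref{main}, carried out at the borderline exponent $\alpha=\tfrac12$ while tracking the logarithmic loss this endpoint forces. Write $A=(-\Delta)^{1/4}=(-\Delta)^{\alpha/2}$ and $P=\sqrt{-\Delta}$, so that $A=P^{\alpha}=P^{1/2}$, and let $\mu=\lambda^{1/\alpha}=\lambda^{2}$ be the frequency scale attached to the eigenvalue $\lambda$. Fix $\chi\in C_c^{\infty}(\Omega)$ with $\chi\equiv1$ on a neighbourhood of $K$. Since $e_\lambda$ solves \eqref{eigen} and $(-\Delta)^{\alpha/2}e_\lambda=\lambda e_\lambda$ pointwise on $\Omega\supset\operatorname{supp}\chi$, one has on all of $\mathbb R^{d}$ the exact identity $(A-\lambda)(\chi e_\lambda)=[A,\chi]e_\lambda$. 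Choose an even $\rho\in\mathcal S(\mathbb R)$ with $\rho(0)=1$ and $\widehat\rho$ supported in a small interval, and let $b$ be the symbol determined by $b(s)(s^{1/2}-\lambda)=1-\rho(\mu-s)$; it has a removable singularity and vanishes at $s=\mu$, is $O(\mu^{1/2})$ on $|s-\mu|\ls1$, is $O(\lambda^{-1})$ for $0\le s\ls\lambda^{2}$, and is $O(s^{-1/2})$ for $s\gg\mu$. Multiplying the identity by $b(P)$ and using $b(P)(A-\lambda)=I-\rho(\mu-P)$ yields the reproducing identity
\[
\chi e_\lambda=\rho(\mu-P)(\chi e_\lambda)+b(P)\,[A,\chi]e_\lambda .
\]

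Restricting to $K$ and taking $L^{p}$ norms, the first term is handled by the classical Euclidean spectral cluster bound $\|\rho(\mu-P)\|_{L^{2}(\mathbb R^{d})\to L^{p}(\mathbb R^{d})}\ls\mu^{\sigma(d,p)}$, where $\sigma(d,p)=\max\{\tfrac{d-1}{2}(\tfrac12-\tfrac1p),\,\tfrac{d-1}{2}-\tfrac dp\}$ — this is the Stein--Tomas restriction estimate for $p\ge p_c=\tfrac{2d+2}{d-1}$ and interpolation with the trivial $L^{2}$ bound for $2\le p\le p_c$, i.e.\ the Euclidean $\alpha=\tfrac12$ analogue of Sogge's estimate \cite{soggeJFA} with $\lambda$ replaced by $\mu$. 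Since $\mu^{\sigma(d,p)}=\lambda^{\sigma(d,\frac12,p)}$, this term already produces the conjectured power with \emph{no} logarithm. Everything is thus reduced to the commutator estimate
\[
\bigl\|b(P)\,[A,\chi]e_\lambda\bigr\|_{L^{p}(K)}\ls\lambda^{\sigma(d,\frac12,p)}(\log\lambda)^{\gamma(d,p)}\|e_\lambda\|_{2}.
\]

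This commutator estimate is the main obstacle. One begins by splitting $e_\lambda=\chi_{1}e_\lambda+(1-\chi_{1})e_\lambda$ with $\chi_{1}\in C_c^{\infty}(\Omega)$ equal to $1$ on $\operatorname{supp}\chi$: since $[A,\chi](1-\chi_{1})$ has a smooth, off-diagonal kernel it is strongly smoothing, so this part contributes harmlessly on $K$ once one has a priori control of $e_\lambda$ — and this is the one point where the $C^{1,1}$ regularity of $\partial\Omega$ is needed, through the heat-kernel bounds for the restricted fractional Laplacian on $\Omega$. For $b(P)[A,\chi]\chi_{1}e_\lambda$ one carries out a multiscale analysis in $P$: the frequencies $\gg\mu$ are trivial, since there $b(P)[A,\chi]$ has order $-\alpha+(\alpha-1)=-1$ and the dyadic sum is geometric; the frequencies $\ls\mu$ are handled by iterating the reproducing identity on $\chi_{1}e_\lambda$, each iteration shedding a further $\rho(\mu-P)(\,\cdot\,)$ term (yielding another admissible $\lambda^{\sigma}$) together with a gain from composing two commutators, and by using the $L^{2}$ almost-orthogonality of the frequency projections; and the frequencies on the unit annulus $|P-\mu|\ls1$ around the eigenvalue are split into $\sim\log\mu\sim\log\lambda$ sub-scales on which, precisely because $\alpha=\tfrac12$, the growth of $b(P)$ is exactly balanced against the shrinking width of the local spectral cluster, so that these $\sim\log\lambda$ sub-scales contribute comparably and the sum cannot be made geometric. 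This is the sole source of the logarithm: for $\alpha>\tfrac12$ one gains a genuine power at each sub-scale and recovers the log-free bound of Theorem~\ref{main}. Keeping the per-scale constants in this sum uniform in $\lambda$ is the delicate point of the whole argument.

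Finally, the power $\gamma(d,p)=\min\{\tfrac14,\tfrac{d+1}{4}(\tfrac12-\tfrac1p)\}$ emerges from how the $\sim\log\lambda$ comparable sub-scale contributions are summed, and its $p$-dependence mirrors the transition at $p_c$: for $p\ge p_c$ the unit-frequency pieces of $P$ are additionally almost orthogonal in $L^{p}$ (the reverse square function estimate, equivalent to Stein--Tomas in that range), so the sum may be taken in $\ell^{2}$ and, together with the Cauchy--Schwarz already present in the cluster bound, the loss is only $(\log\lambda)^{1/4}$; for $2\le p\le p_c$ only the interpolated estimate is available, giving $(\log\lambda)^{\frac{d+1}{4}(\frac12-\frac1p)}$, which vanishes at $p=2$ (consistent with the trivial $L^{2}$ identity) and equals $\tfrac14$ at $p=p_c$. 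The endpoint $p=\infty$ follows by a direct duality/Sobolev argument at the top scale, or by letting $p\to\infty$, and interpolation with $p=2$ fills the intermediate range; the weaker unconditional form of the commutator bound valid in the remaining ranges of Theorem~\ref{main}, of which the $\alpha=\tfrac12$ estimate above is the borderline refinement, is the content of Proposition~\ref{prop2}.
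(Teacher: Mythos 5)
Your reduction to a commutator bound is in the same spirit as the paper's (which writes $v=\beta e_\lambda$, inverts $((-\Delta)^{\alpha/2}-z)^{-1}=T_0+R$ with $z=\lambda+i\,\mathrm{Im}\,z$ and composes with the heat semigroup $Q=P_{1/\lambda}^\Omega$ to localize), but the proposal has a genuine gap: the commutator estimate, which is the entire content of the theorem at $\alpha=\tfrac12$, is never proved. What is actually needed is the quantitative bound $\|[\beta,(-\Delta)^{1/4}]e_\lambda\|_2\ls\lambda^{-1}(\log\lambda)^{1/2}\|e_\lambda\|_2$ (the paper's \eqref{comm2}); your sketch replaces it with a multiscale frequency analysis whose key steps are only described ("a gain from composing two commutators", "the per-scale constants \dots is the delicate point"), and the region $|P-\mu|\ls1$, where your multiplier $b(P)$ has operator norm of size $\lambda$, is exactly where nothing is estimated. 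Even the piece you call harmless, $b(P)[A,\chi](1-\chi_1)e_\lambda$, is not: the crude bound $\|[A,\chi](1-\chi_1)e_\lambda\|_2\ls\|e_\lambda\|_2$ combined with $\|b(P)\|_{2\to2}\sim\lambda$ gives a contribution of size $\lambda\|e_\lambda\|_2$, far above the target.

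More importantly, you misidentify the source of the logarithm. In the paper it is not a frequency-space phenomenon near the eigenvalue but a \emph{boundary} effect: one decomposes $e_\lambda=\lambda^{-1}(-\Delta)^{\alpha/2}e_\lambda+Te_\lambda$, handles the first piece with the paradifferential commutator lemma (Lemma \ref{lemma3}) plus the global bound $\|(-\Delta)^{\alpha/2}e_\lambda\|_2\approx\lambda\|e_\lambda\|_2$ of Lemma \ref{lemma5}, and reduces the second (supported in $\Omega^c$) to the integral $\int_\Omega\delta_\Omega(z)^{-\alpha}|e_\lambda(z)|\,dz$; Cauchy--Schwarz then meets $\int_\Omega\delta_\Omega(z)^{-2\alpha}dz$, which diverges logarithmically precisely at $\alpha=\tfrac12$. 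This is where the nonlocal interaction with $\partial\Omega$ and the heat-kernel bounds of Lemma \ref{heat} enter essentially; your scheme never uses any global information about $e_\lambda$ outside the support of $\chi$, without which the nonlocal commutator cannot be controlled. Finally, the exponent $\gamma(d,p)$ does not come from summing $\log\lambda$ comparable sub-scales in $\ell^2$: it arises by choosing $\mathrm{Im}\,z\approx\lambda^{-1}(\log\lambda)^{1/2}$ to balance the commutator loss against the factor $\mathrm{dist}((z/|z|)^{2/\alpha},[0,\infty))^{\frac{d+1}{2}(\frac12-\frac1p)-1}$ in the Kwon--Lee resolvent bound, producing $(\log\lambda)^{\frac12+\frac{d+1}{4}(\frac12-\frac1p)-\frac12}$ for $p\le p_c$ and $(\log\lambda)^{1/4}$ beyond $p_c$; your heuristic does not recover this computation (for example it gives no reason for the loss to vanish at $p=2$).
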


When $\frac12<\alpha<2$, Theorem \ref{main} is actually a corollary of the following result.

\begin{theorem}\label{th:commut}
Let $\beta $ be a smooth cut-off function such that $\beta=1 $ in $K$, and $\beta=0 $ outside a small neighbourhood of $K$ which is contained in $\Omega$, with $K\subset\subset  \Omega$. Assume that $e_\lambda$ is an eigenfunction satisfying \eqref{eigen}. If the following commutator estimate holds
\begin{equation}\label{comm}
\|[\beta,(-\Delta)^{\alpha/2}]e_\lambda\|_{2}\ls \lambda^{1-\frac{1}{\alpha}}\|e_\lambda\|_{2},\ \ \lambda>1,\end{equation}
then Conjecture \ref{conj} holds.
\end{theorem}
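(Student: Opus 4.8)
\emph{Proof strategy.} The plan is to transplant the problem to all of $\mathbb R^d$, where $e_\lambda$ becomes a quasimode of $(-\Delta)^{\alpha/2}$ concentrated at frequency $\mu:=\lambda^{1/\alpha}$, and then run a Sogge-type spectral-cluster argument. Set $u:=\beta e_\lambda$. Since $\operatorname{supp}\beta$ is a compact subset of $\Omega$ and $e_\lambda$ solves \eqref{eigen}, the identity $\beta\,(-\Delta)^{\alpha/2}e_\lambda=\lambda\,\beta e_\lambda$ holds on all of $\mathbb R^d$, hence
\[\big((-\Delta)^{\alpha/2}-\lambda\big)u=-f,\qquad f:=[\beta,(-\Delta)^{\alpha/2}]e_\lambda,\qquad\text{on }\mathbb R^d.\]
By \eqref{comm}, $\|f\|_2\ls\gamma\,\|e_\lambda\|_2$ with $\gamma:=\lambda^{1-1/\alpha}$, while trivially $\|u\|_2\le\|\beta\|_\infty\|e_\lambda\|_2\ls\|e_\lambda\|_2$. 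Since $\beta\equiv1$ on $K$ we have $\|e_\lambda\|_{L^p(K)}\le\|u\|_p$, so it is enough to prove $\|u\|_p\ls\lambda^{\sigma(d,\alpha,p)}\|e_\lambda\|_2$; and since $\lambda^{\sigma(d,\alpha,p)}=\mu^{\sigma_0(p)}$ with $\sigma_0(p):=\alpha\,\sigma(d,\alpha,p)=\max\{\tfrac{d-1}2(\tfrac12-\tfrac1p),\ \tfrac{d-1}2-\tfrac dp\}$ the classical Stein--Tomas/Sogge exponent, what I must show is that this quasimode satisfies the same $L^p$ bound $\|u\|_p\ls\mu^{\sigma_0(p)}\|e_\lambda\|_2$ as a genuine eigenfunction of $-\Delta$ at frequency $\mu$. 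I would split $u=u_{\mathrm{lo}}+u_{\mathrm{hi}}$ according to whether $|\xi|^\alpha\le2\lambda$ or $|\xi|^\alpha>2\lambda$, and handle the two pieces by different means.

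\textbf{High frequencies.} On the Fourier support of $u_{\mathrm{hi}}$ one has $|\xi|^\alpha-\lambda\ge\tfrac12|\xi|^\alpha$, so $\widehat{u_{\mathrm{hi}}}=\widehat f/(|\xi|^\alpha-\lambda)$ together with $\|f\|_2\ls\gamma\|e_\lambda\|_2$ gives $\|u_{\mathrm{hi}}\|_{\dot H^s}\ls\mu^{s-1}\|e_\lambda\|_2$ for $0\le s\le\alpha$. In the range $d(\tfrac12-\tfrac1p)>\alpha$ this has to first be promoted to all $s\ge0$ by an interior-regularity bootstrap: take nested cut-offs $\beta=\beta_0,\beta_1,\dots,\beta_N$ with $\beta_{j+1}\equiv1$ near $\operatorname{supp}\beta_j$, all supported in $\Omega$; the Schwartz kernel $c_{d,\alpha}(\beta_j(y)-\beta_j(x))|x-y|^{-d-\alpha}$ of $[\beta_j,(-\Delta)^{\alpha/2}]$ is smooth and rapidly decaying off the diagonal, so $[\beta_j,(-\Delta)^{\alpha/2}]g=[\beta_j,(-\Delta)^{\alpha/2}](\beta_{j+1}g)$ up to an operator bounded $L^2\to H^\infty$, while $[\beta_j,(-\Delta)^{\alpha/2}]$ maps $H^t\to H^{t-\alpha+1}$; iterating over the chain and invoking \eqref{comm} for each $\beta_j$ yields $\|\beta_je_\lambda\|_{H^s}\le C_s\mu^s\|e_\lambda\|_2$, hence $\|u_{\mathrm{hi}}\|_{\dot H^s}\ls\mu^{s-1}\|e_\lambda\|_2$ for every $s$. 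In either case Sobolev embedding gives $\|u_{\mathrm{hi}}\|_p\ls\mu^{d(\frac12-\frac1p)-1}\|e_\lambda\|_2$, and an elementary check (treating $p\le p_c$ and $p\ge p_c$ separately) shows $d(\tfrac12-\tfrac1p)-1\le\sigma_0(p)$ throughout $2\le p\le\infty$ and $0<\alpha<2$, so $\|u_{\mathrm{hi}}\|_p\ls\mu^{\sigma_0(p)}\|e_\lambda\|_2$.

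\textbf{Low frequencies.} For $u_{\mathrm{lo}}$ I would use a Sogge-type quasimode estimate. Let $E_k:=\mathbf 1_{[\lambda+k\gamma,\,\lambda+(k+1)\gamma)}\big((-\Delta)^{\alpha/2}\big)$; then $u_{\mathrm{lo}}=\sum_k E_k u_{\mathrm{lo}}$ over the finitely many $k$ with $|k|\ls\mu$, the summands are $L^2$-orthogonal, and for $k\notin\{-1,0\}$ one has $|(-\Delta)^{\alpha/2}-\lambda|\gtrsim|k|\gamma$ on $\operatorname{ran}E_k$, so the equation gives $\|E_ku_{\mathrm{lo}}\|_2\ls(|k|\gamma)^{-1}\|E_kf\|_2$. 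By Cauchy--Schwarz and orthogonality,
\[\|u_{\mathrm{lo}}\|_p\le\sum_k\|E_k\|_{2\to p}\|E_ku_{\mathrm{lo}}\|_2\ls\Big(\max_{k\in\{-1,0\}}\|E_k\|_{2\to p}\Big)\|u\|_2+\gamma^{-1}\Big(\sum_k\frac{\|E_k\|_{2\to p}^2}{\max(|k|,1)^2}\Big)^{1/2}\|f\|_2 .\]
Each $E_k$ is the Fourier projection onto the annulus $\{(\lambda+k\gamma)_+^{1/\alpha}\le|\xi|<(\lambda+(k+1)\gamma)_+^{1/\alpha}\}$, and the reason for taking $\gamma=\lambda^{1-1/\alpha}$ is precisely that for $|k|=O(1)$ this annulus has radius $\approx\mu$ and width $\approx\tfrac1\alpha$, i.e. a \emph{unit}-width annulus at radius $\mu$. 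Using the classical Euclidean spectral-cluster bound $\|\mathbf 1_{[R,R+1]}(\sqrt{-\Delta})\|_{2\to p}\ls R^{\sigma_0(p)}$, and its consequence $\|\mathbf 1_{[R,R+W]}(\sqrt{-\Delta})\|_{2\to p}\ls R^{\sigma_0(p)}\max(W,1)^{1/2}$ for the remaining windows, one gets $\max_{k\in\{-1,0\}}\|E_k\|_{2\to p}\ls\mu^{\sigma_0(p)}$, and then, after a short case analysis on the position of the window --- its radius running from $O(1)$ up to $\approx\mu$, its width lying above or below $1$ according as $\alpha>1$ or $\alpha<1$ --- $\sum_k\|E_k\|_{2\to p}^2\max(|k|,1)^{-2}\ls\mu^{2\sigma_0(p)}$. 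Plugging these together with $\|u\|_2\ls\|e_\lambda\|_2$ and $\|f\|_2\ls\gamma\|e_\lambda\|_2$ into the displayed inequality yields $\|u_{\mathrm{lo}}\|_p\ls\mu^{\sigma_0(p)}\|e_\lambda\|_2$, which completes the proof.

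The hard part, I expect, will be the last step: seeing that the window width $\gamma=\lambda^{1-1/\alpha}$ is exactly the one that carries spectral windows of $(-\Delta)^{\alpha/2}$ near $\lambda$ to \emph{unit}-width Euclidean annuli at radius $\mu=\lambda^{1/\alpha}$ --- this is precisely why \eqref{comm} is stated with that power of $\lambda$, and why $\sigma(d,\alpha,p)=\sigma_0(p)/\alpha$ --- and then the somewhat delicate bookkeeping of the window sum, where one must track how the annular radius and width vary over the $O(\mu)$ windows and use the $|k|^{-2}$ weight to absorb the far-out, large-norm windows. The high-frequency bootstrap is routine, but it is the one point where interior smoothness of $e_\lambda$, and hence a mild use of the $C^{1,1}$ regularity of $\Omega$, enters.
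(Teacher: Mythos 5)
Your argument is correct in outline, but it takes a genuinely different route from the paper. The paper never splits frequencies or touches spectral projectors: it fixes the complex number $z=\lambda+i\lambda^{1-1/\alpha}$, writes $v=((-\Delta)^{\alpha/2}-z)^{-1}w$ with $w=(\lambda-z)v-[\beta,(-\Delta)^{\alpha/2}]e_\lambda$, decomposes the fractional resolvent via the subordination identity $((-\Delta)^{\alpha/2}-z)^{-1}=T_0+R$ (a multiple of $(-\Delta-z^{2/\alpha})^{-1}$ plus an integral remainder), bounds $T_0$ by the sharp Kwon--Lee resolvent estimates (Lemma \ref{lemma1}) and $R$ by Bessel-kernel estimates, and then composes with a heat-semigroup operator $Q$ built from the two-sided Dirichlet heat kernel bounds (Lemma \ref{heat}); $Q$ both localizes to $K$ and supplies the $L^{p_c}\to L^p$ smoothing needed for $p>p_c$ and for the remainder $R$ (see \eqref{uest}). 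Your choice of window width $\gamma=\lambda^{1-1/\alpha}$ is exactly the paper's choice of $\operatorname{Im}z$, and your unit-annulus projector bound is the real-axis shadow of the Kwon--Lee bound at that $z$, so the two proofs encode the same sharp input; what yours buys is self-containedness (classical Stein--Tomas/Sogge projector bounds and Sobolev embedding replace the subordination formula, the Kwon--Lee theorem, and all heat kernel machinery), while the paper's resolvent-plus-$Q$ setup is reused verbatim in Sections 4--5 with other values of $\operatorname{Im}z$ when only weaker commutator bounds are available. Two points you should make explicit. First, your high-frequency bootstrap invokes \eqref{comm} for a whole chain of nested cutoffs $\beta_j$, not just the given $\beta$; this is legitimate (the conclusion quantifies over all compact $K$, so the hypothesis must be read as holding for every admissible pair $(K,\beta)$, and the paper itself applies it to a cutoff adapted to an enlarged set $K_1\supset\supset K$), but it should be stated. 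Second, the window bookkeeping genuinely bifurcates at $\alpha=1$: for $\alpha<1$ all windows have width $O_\alpha(1)$ and the $|k|^{-2}$ weight alone closes the sum, whereas for $\alpha>1$ the windows near the origin of frequency space are wide annuli of width up to $\mu^{1-1/\alpha}$ and one must use both the $\max(W,1)^{1/2}$ loss and the weight $|k|^{-2}\approx\mu^{-2}$ there; your sketch is consistent with this but the case analysis is where the actual work lies.
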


Except in the range $\frac12<\alpha<2$, we are not able to prove the desired commutator estimate \eqref{comm}, but a weaker one. In particular, when $\alpha=\frac12$, we can prove   \[\|[\beta,(-\Delta)^{\alpha/2}]e_\lambda\|_{2}\ls \lambda^{-1}(\log\lambda)^\frac12\|e_\lambda\|_{2}.\]

As far as global $L^p$ bounds are concerned (i.e in all of $\Omega$), we conjecture that the phenomenon observed in \cite{smithSogge}, namely concentration of eigenfunctions close to the boundary for some values of $p$, does not hold in our case. This claim is supported by the nonlocal effects of the fractional Laplacian, in the sense that the operator {\sl does not see} the boundary $\partial \Omega$. This phenomenon has been observed in many instances (see e.g. \cite{caffaSire} for an account). From this point of view, nonlocal operators are easier to study than local ones, as far as boundary effects are concerned.

The present paper opens the way to provide a refined study of eigenfunction estimates for problems with boundary conditions involving the eigenfunction itself, like the {\sl Steklov} problem. This latter is intrinsically a nonlocal problem and we plan to address this issue in a forthcoming work \cite{HSZ}.

The paper is organized  as follows. In Section 2, in order to use known results for the fractional Laplacian in the {\sl whole} space, we consider a standard cut-off function applied to the eigenfunction $u$ and re-write the eigenvalue problem in a suitable form. Using the heat kernel method and resolvent estimates, we reduce the whole study to a commutator estimate.

In Section 3, we prove the commutator estimate when $\frac12<\alpha<2$ by using heat kernel estimates. This proves Theorem \ref{main} for $\alpha$ in this range. We also obtain some $L^p$ estimates weaker than the conjecture bounds when $0<\alpha\le \frac12$. In particular, when $\alpha=\frac12$, our $L^p$ estimates agree with the conjecture bounds up to some log factors.

In Section 4, we study the eigenfunction estimates when $0<\alpha<\frac12$. We decompose the range of $\alpha$ into small intervals $[\frac1{N+1},\frac1N)$, $N\ge2$. Then we prove a commutator estimate in each of them, and improve the eigenfunction estimates when $0<\alpha<\frac12$.

In Section 5, we focus on 1 dimensional eigenfunction estimates when $0<\alpha\le \frac12$, and complete the proof of Theorem \ref{main}. We will combine the previous ideas and the approximation method to prove better $L^\infty$ and $L^p$ estimates.

\section{Reduction to a commutator estimate}
We will use the heat kernel method and resolvent estimates to reduce the problem to a commutator estimate. This is the main idea behind Theorem \ref{main}.

Choose a compact set $K_1$ such that $K\subset\subset K_1\subset \Omega$ and a smooth cut-off function $\beta$ such that $\beta=1 $ in $K_1$, and $\beta=0 $ outside a small neighbourhood of $K_1$ which is contained in $\Omega$.

Denoting  $v=\beta e_\lambda$, one has
\begin{equation}
 \begin{aligned}
     (-\Delta)^{\alpha/2}v&=\beta(-\Delta)^{\alpha/2}e_\lambda-[\beta,(-\Delta)^{\alpha/2}]e_\lambda \\
      &=\lambda v-[\beta,(-\Delta)^{\alpha/2}]e_\lambda
 \end{aligned}
 \end{equation}
Fix $z=\lambda+\lambda^{1-\frac1\alpha}i$, $\lambda\gg1$.  So we obtain
\[ v=((-\Delta)^{\alpha/2}-z)^{-1}((\lambda-z)v-[\beta,(-\Delta)^{\alpha/2}]e_\lambda)\]
Note that $0<\arg(z)<\pi \alpha/4$ for large $\lambda$. We may use the following identity for this choice of $z$ and $0<\alpha<2$ (see \cite{MSbook}, page 118, (5.28)):
\begin{align*}((-\Delta)^{\alpha/2}-z)^{-1}&=\frac{z^{(2-\alpha)/\alpha}}{\alpha/2}(-\Delta-z^{2/\alpha})^{-1}+\frac{\sin(\pi \alpha/2)}{\pi}\int_0^\infty\frac{\tau^{\alpha/2}(\tau-\Delta)^{-1}}{\tau^\alpha-2z\tau^{\alpha/2}\cos(\pi \alpha/2)+z^2}d\tau\\
&:=T_0+R\ .\end{align*}
Now we need to estimate the norms of $T_0$ and $R$. Fix $p_c=\frac{2d+2}{d-1}$. In particular, we set $p_c=\infty$ when $d=1$. We need the following sharp resolvent estimates for the standard Laplacian in $\mathbb{R}^d$. It follows directly from \cite{KwonLee}, Theorem 1.4 and Remark 2.

\begin{lemma}\label{lemma1}Let $d\ge1$ and $z\in\mathbb{C}\setminus [0,\infty)$. Then for $2\le p\le p_c$
\[\|(-\Delta-z)^{-1}\|_{2\to p}\approx |z|^{-1+\frac{d}2(\frac12-\frac1p)}dist(\tfrac{z}{|z|},[0,\infty))^{\frac{d+1}2(\frac12-\frac1p)-1}.\]
Moreover, for $p_c\le p\le \infty$,
\[\|(-\Delta-z)^{-1}\|_{p\to p}\approx |z|^{-1}dist(\tfrac{z}{|z|},[0,\infty))^{\frac{d+1}2-\frac{d}p}.\]
The constants here only depend on $d$ and $p$.
\end{lemma}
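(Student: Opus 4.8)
The plan is to reduce the estimate to the normalized case $|z|=1$ by the dilation covariance of $-\Delta$, and then to invoke the sharp resolvent bounds of Kwon--Lee on the unit circle. For the first step, I would write $z=|z|\zeta$ with $|\zeta|=1$ and set $D_\mu f(x)=f(\mu x)$; a direct computation shows that, for $\mu=|z|^{1/2}$,
\[(-\Delta-z)^{-1}=|z|^{-1}\,D_{\mu}\,(-\Delta-\zeta)^{-1}\,D_{1/\mu},\]
an exact conjugation. Since $\|D_\mu\|_{L^q\to L^q}=\mu^{-d/q}$, estimating the inner factor in $L^2$ and the outer one in $L^p$ (resp. both in $L^p$) gives
\[\|(-\Delta-z)^{-1}\|_{2\to p}=|z|^{-1+\frac d2(\frac12-\frac1p)}\,\|(-\Delta-\zeta)^{-1}\|_{2\to p},\qquad \|(-\Delta-z)^{-1}\|_{p\to p}=|z|^{-1}\,\|(-\Delta-\zeta)^{-1}\|_{p\to p},\]
while $\mathrm{dist}(z/|z|,[0,\infty))=\mathrm{dist}(z,[0,\infty))/|z|$. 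Hence it suffices to prove, for $|\zeta|=1$, $\zeta\ne1$, and $\delta:=\mathrm{dist}(\zeta,[0,\infty))$, that $\|(-\Delta-\zeta)^{-1}\|_{2\to p}\approx\delta^{\frac{d+1}2(\frac12-\frac1p)-1}$ when $2\le p\le p_c$, and $\|(-\Delta-\zeta)^{-1}\|_{p\to p}\approx\delta^{\frac{d+1}2-\frac dp}$ when $p_c\le p\le\infty$.

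For the second step I would invoke \cite{KwonLee}, Theorem 1.4 and Remark 2, which provide exactly these unit-circle resolvent estimates, sharp in both the $|z|$-weight and the distance to the spectrum; the regime that carries content is $\delta\to0$, i.e. $\zeta$ near $1$, since for $\delta\gtrsim1$ one has the classical uniform elliptic/restriction bounds. The structure of that input is: the endpoint $\|(-\Delta-\zeta)^{-1}\|_{2\to p_c}\approx\delta^{-1/2}$ is the Stein--Tomas restriction estimate with its sharp $\delta$-loss ($p_c=\tfrac{2(d+1)}{d-1}$ being the Stein--Tomas exponent); interpolating this with the trivial bound $\|(-\Delta-\zeta)^{-1}\|_{2\to2}=\delta^{-1}$ yields the full range $2\le p\le p_c$ with precisely the exponent $\tfrac{d+1}2(\tfrac12-\tfrac1p)-1$; and the range $p_c\le p\le\infty$ follows, after using $\|(-\Delta-\zeta)^{-1}\|_{p\to p}=\|(-\Delta-\bar\zeta)^{-1}\|_{p'\to p'}$, from Carleson--Sj\"olin/Bochner--Riesz-type (equivalently, spectral cluster) estimates with the $\delta$-dependence tracked. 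Concretely, the kernel of $(-\Delta-\zeta)^{-1}$ is a Hankel function that for $\zeta$ near $1$ behaves like the oscillatory kernel $|x|^{-(d-1)/2}e^{i\sqrt\zeta|x|}$, effectively supported in $|x|\lesssim\delta^{-1}$, so the restriction/extension machinery at this scale produces the stated powers of $\delta$; the matching lower bounds come from Knapp-type test functions adapted to a $\delta$-neighbourhood of the sphere $\{|\xi|^2=\mathrm{Re}\,\zeta\}$.

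The genuine work on our side is only the scaling bookkeeping of the first step and a check that the distance function appearing in Lemma \ref{lemma1} coincides with the normalization used in \cite{KwonLee}; this is routine. The hard part is entirely contained in the cited reference: if one wanted a self-contained argument, the main obstacle would be the sharp Stein--Tomas estimate with the precise $\delta^{-1/2}$ loss at $p=p_c$ together with its sharpness, i.e. reproving the core of \cite{KwonLee}; interpolation with the elementary $L^2\to L^2$ bound and duality then distribute the $\delta$-powers over all remaining exponents automatically.
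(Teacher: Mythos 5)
Your proposal is correct and matches the paper, which gives no argument for this lemma beyond the citation: it states that the result ``follows directly from \cite{KwonLee}, Theorem 1.4 and Remark 2.'' Your scaling reduction to the unit circle and the sketch of the Stein--Tomas/interpolation/duality structure behind the cited bounds are accurate elaborations of that same route.
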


Then we have for $d\ge1$ and $2\le p\le p_c$
\begin{equation}\label{T0}\|T_0\|_{2\to p}\ls |z|^{\frac{d}{\alpha}(\frac12-\frac1p)-1}dist((\tfrac{z}{|z|})^{2/\alpha},[0,\infty))^{\frac{d+1}2(\frac12-\frac1p)-1}\approx \lambda^{\frac{d-1}{2\alpha}(\frac12-\frac1p)+\frac{1-\alpha}\alpha}.
\end{equation}
For $\tau>0$ and $d\ge1$, the kernel of $(\tau-\Delta)^{-1}$ satisfies
\[(\tau-\Delta)^{-1}(x,y)=\tau^{\frac{d-2}4}|x-y|^{\frac{2-d}2}K_{\frac{d-2}2}(\tau^{1/2}|x-y|)\]
where $K_m(r)$ are the modified Bessel functions of the second kind. See e.g. \cite{KRS89}, \cite{GS64}. When $0<r\le1$,
\[|K_{\frac{d-2}2}(r)|\le \begin{cases}C_d r^{-\frac{d-2}2},\ d\ge3\\
C|\log(r/2)|,\ d=2\\
C r^{-1/2},\ d=1
\end{cases}\]
and when $r\ge1$
\[|K_{\frac{d-2}2}(r)|\le C_d r^{-1/2}e^{-r},\ d\ge1.\]
These estimates imply that when $\tau^\frac12|x-y|\le1$
\[|(\tau-\Delta)^{-1}(x,y)|\ls \begin{cases}
  |x-y|^{2-d},\ d\ge3\\
  |\log(\tau^\frac12|x-y|/2)|,\ d=2\\
  \tau^{-1/2},\ d=1
\end{cases} \]
and when $\tau^\frac12|x-y|\ge1$
\[|(\tau-\Delta)^{-1}(x,y)|\ls\tau^{\frac{d-3}4}|x-y|^{\frac{1-d}2}e^{-\tau^{1/2}|x-y|},\ d\ge1.\]
Note that
\[\tau^\alpha-2z\tau^{\alpha/2}\cos(\pi \alpha/2)+z^2=(\tau^{\alpha/2}-ze^{i\pi \alpha/2})(\tau^{\alpha/2}-ze^{-i\pi \alpha/2})\]
Recall that $0<\arg(z)<\pi \alpha/4$. So we have $|\arg(ze^{\pm i\pi \alpha/2})|>\pi \alpha/4$. Then
\[|\tau^{\alpha/2}-ze^{\pm i\pi \alpha/2}|\approx \tau^{\alpha/2}+|z|\]
which implies
\[|\tau^\alpha-2z\tau^{\alpha/2}\cos(\pi \alpha/2)+z^2|\approx \tau^{\alpha}+|z|^2.\]
Combing this estimate with the bounds on the kernel $(\tau-\Delta)^{-1}(x,y)$, we are able to compute the kernel of $R$. We may use $|\tau^\alpha-2z\tau^{\alpha/2}\cos(\pi \alpha/2)+z^2|\gs \tau^{\alpha}$ to obtain
\begin{equation}\label{R1}|R(x,y)|\ls |x-y|^{\alpha-d},\ d\ge1.
\end{equation}
Moreover, we may exploit $|\tau^\alpha-2z\tau^{\alpha/2}\cos(\pi \alpha/2)+z^2|\gs |z|^2$ to get
\begin{equation}\label{R2}
  |R(x,y)|\ls |z|^{-2}|x-y|^{-\alpha-d},\ d\ge1.
\end{equation}
For optimization, we will use \eqref{R1} when $|x-y|\le|z|^{-1/\alpha}$ and use \eqref{R2} when $|x-y|\ge|z|^{-1/\alpha}$.
Thus, by Young's inequality (or Hardy-Littlewood-Sobolev inequality at $p_o=\frac{2d}{d-2\alpha}$), we have
\begin{equation}\label{R}
  \|R\|_{2\to p}\ls |z|^{\frac{d}{\alpha}(\frac12-\frac1p)-1}\approx \lambda^{\frac{d}{\alpha}(\frac12-\frac1p)-1},\ p\le p_o.
\end{equation}

Next, we will use the heat kernel method. Let $P_t^\Omega$ be the heat semigroup for the Dirichlet fractional Laplacian $-(-\Delta)^{\alpha/2}|_\Omega$. Then
\[P_t^\Omega e_\lambda=e^{-t\lambda}e_\lambda\]
which gives $P_t^\Omega e_\lambda=e^{-1}e_\lambda$ when $t=\lambda^{-1}$. We always fix $t=\lambda^{-1}$. Let $p_\Omega(t,x,y)$ be the heat kernel of the Dirichlet fractional Laplacian $-(-\Delta)^{\alpha/2}|_\Omega$. We will use the following two-sided heat kernel estimates from \cite{chenJEMS}, Theorem 1.1.
\begin{lemma} \label{heat}Let $0<\alpha<2$. Let $\Omega$ be a $C^{1,1}$ open subset of $\mathbb{R}^d$ with $d\ge1$ and $\delta_\Omega(x)$ the Euclidean distance between $x$ and $\Omega^c$. Then
\[p_\Omega(t,x,y)\approx \Big(1\wedge \frac{\delta_\Omega(x)^{\alpha/2}}{\sqrt t}\Big)\Big(1\wedge \frac{\delta_\Omega(y)^{\alpha/2}}{\sqrt t}\Big)\Big(t^{-d/\alpha}\wedge \frac{t}{|x-y|^{d+\alpha}}\Big), \ x,y\in\Omega,\ t\in(0,1].\]
\end{lemma}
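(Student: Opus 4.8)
The estimate in Lemma~\ref{heat} is Theorem~1.1 of \cite{chenJEMS}, which we invoke as a black box; here I sketch the route one would take to prove it. The kernel $p_\Omega(t,x,y)$ is the transition density of the symmetric $\alpha$-stable L\'evy process $X$ killed upon exiting $\Omega$, and the two standard inputs are the classical free estimate $p_{\mathbb R^d}(t,x,y)\approx t^{-d/\alpha}\wedge t|x-y|^{-d-\alpha}$ together with the L\'evy system (Ikeda--Watanabe formula), which rewrites the killed density in terms of surviving near $x$, making a single jump governed by the L\'evy density $\nu(w)\approx|w|^{-d-\alpha}$, and surviving near $y$. The one genuinely new ingredient is the survival probability $\mathbb P_x(\zeta>t)=\int_\Omega p_\Omega(t,x,y)\,dy\approx 1\wedge \delta_\Omega(x)^{\alpha/2}/\sqrt t$ for $t\in(0,1]$ (here $\zeta$ is the first exit time of $X$ from $\Omega$), which rests on the fact that in a $C^{1,1}$ domain $\delta_\Omega^{\alpha/2}$ is comparable near $\partial\Omega$ to a positive $(-\Delta)^{\alpha/2}$-harmonic function vanishing on the boundary --- modelled on the identity that $(x_d)_+^{\alpha/2}$ is $(-\Delta)^{\alpha/2}$-harmonic in $\{x_d>0\}$ --- together with scaling and a boundary-Harnack localisation.

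For the upper bound I would separate the near-diagonal regime $|x-y|\le t^{1/\alpha}$ from the off-diagonal regime $|x-y|>t^{1/\alpha}$. Near the diagonal, use the semigroup identity $p_\Omega(t,x,y)=\iint_\Omega p_\Omega(\tfrac{t}{3},x,w)\,p_\Omega(\tfrac{t}{3},w,z)\,p_\Omega(\tfrac{t}{3},z,y)\,dw\,dz$, bound the middle factor by the free on-diagonal bound $p_{\mathbb R^d}(\tfrac{t}{3},w,z)\ls t^{-d/\alpha}$, and integrate the two outer factors to leave the product of survival probabilities $\mathbb P_x(\zeta>\tfrac{t}{3})\,\mathbb P_y(\zeta>\tfrac{t}{3})$. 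Off the diagonal, insert one big jump: express $p_\Omega(t,x,y)$ as a space--time integral of $p_\Omega(s,x,u)\,\nu(u-v)\,p_\Omega(t-s,v,y)$ over $u$ near $x$ and $v$ near $y$, use $\nu(u-v)\ls|x-y|^{-d-\alpha}$ on that range, carry out the $(s,u,v)$-integration to produce the factor $t|x-y|^{-d-\alpha}$, and collapse the remaining mass into the two survival probabilities. In either regime the output is then matched against $1\wedge\delta_\Omega(x)^{\alpha/2}/\sqrt t$ and $1\wedge\delta_\Omega(y)^{\alpha/2}/\sqrt t$ via the survival estimate.

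The lower bound is the harder half. One first establishes an interior lower bound: if $\delta_\Omega(x),\delta_\Omega(y)\gs t^{1/\alpha}$ and $|x-y|\le R\,t^{1/\alpha}$ for fixed $R$, then $p_\Omega(t,x,y)\gs t^{-d/\alpha}$, obtained by confining $X$ to a ball $B(x,ct^{1/\alpha})\subset\Omega$, invoking domain monotonicity $p_\Omega\ge p_{B(x,ct^{1/\alpha})}$, and the free near-diagonal lower bound; the deep-interior off-diagonal case $|x-y|>R\,t^{1/\alpha}$ then follows from a single-big-jump lower bound (chaining being wasteful for stable processes), giving $\gs t|x-y|^{-d-\alpha}$. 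For arbitrary $x,y\in\Omega$ one runs the three-factor decomposition but restricts the intermediate integrations to nearby deep-interior pockets $A_x=\{w\in\Omega:\delta_\Omega(w)\gs t^{1/\alpha},\ |w-x|\ls t^{1/\alpha}\}$ (nonempty for $t$ small, using $C^{1,1}$), applies the interior lower bound to the middle factor, and uses $\int_{A_x}p_\Omega(\tfrac{t}{3},x,w)\,dw\gs 1\wedge\delta_\Omega(x)^{\alpha/2}/\sqrt t$ --- that the killed process not only survives to time $t/3$ but reaches a good interior region with the expected probability --- to recover the two boundary factors.

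I expect the main obstacle to be exactly this last point: the two-sided survival estimate and its refinement ``survives \emph{and} reaches the deep interior.'' This is where the $C^{1,1}$ hypothesis is indispensable, being what makes $\delta_\Omega^{\alpha/2}$ comparable to a harmonic gauge near $\partial\Omega$, and the lower half additionally requires the parabolic boundary Harnack principle for $(-\Delta)^{\alpha/2}$, itself a substantial input (this is the bulk of \cite{chenJEMS}). A secondary nuisance is the bookkeeping in the transitional range $|x-y|\approx t^{1/\alpha}$, where one must interpolate cleanly between the on-diagonal profile $t^{-d/\alpha}$ and the polynomial tail $t|x-y|^{-d-\alpha}$ while carrying the two boundary factors.
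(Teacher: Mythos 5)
Your proposal handles this exactly as the paper does: Lemma~\ref{heat} is not proved in the paper but is quoted directly from Theorem~1.1 of \cite{chenJEMS}, and your citation of that result as a black box is the intended justification. Your accompanying sketch of the Chen--Kim--Song argument (survival probability $\approx 1\wedge\delta_\Omega(x)^{\alpha/2}/\sqrt{t}$, L\'evy system, semigroup splitting for the upper bound, interior estimate plus big-jump for the lower bound) is a faithful and accurate summary of how that theorem is actually proved, but it is supplementary to what the paper requires.
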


Then for $x\in K\subset\subset K_1$,
\begin{align*}
  |e_\lambda(x)|&\ls |\int_{K_1}p_\Omega(t,x,y)e_\lambda(y)dy|+|\int_{\Omega\setminus K_1}p_\Omega(t,x,y)e_\lambda(y)dy|\\
  &\ls |\int_{K_1}p_\Omega(t,x,y)v(y)dy|+\|e_\lambda\|_2\\
  &:=|Qv(x)|+\|e_\lambda\|_2
\end{align*}
Let $w=(\lambda-z)v-[\beta,(-\Delta)^{\alpha/2}]e_\lambda$. Since $v=(T_0+R)w$, we get for $2\le p\le \infty$
\begin{equation}\label{uest}
  \|e_\lambda\|_{L^p(K)}\ls \|Qv\|_p+\|e_\lambda\|_2\ls (\|QT_0\|_{2\to p}+\|QR\|_{2\to p})\|w\|_2+\|e_\lambda\|_2.
\end{equation}
By the heat kernel estimate and Young's inequality, we have
\begin{equation}\label{Q}
  \|Q\|_{p\to q}\ls \lambda^{\frac{d}{\alpha}(\frac1p-\frac1q)},\ 2\le p\le q\le \infty.
\end{equation}
Combining this with \eqref{R} we get
\begin{equation}\label{QR}
  \|QR\|_{2\to p}\ls \lambda^{\frac{d}{\alpha}(\frac12-\frac1p)-1},\ 2\le p\le\infty.
\end{equation}
We may observe that this bound \eqref{QR} is independent of the choice of $z$ whenever $|z|\approx \lambda$. Moreover, by using \eqref{T0} and \eqref{Q}, we have
\begin{equation}\label{QT0}
  \|QT_0\|_{2\to p}\ls \begin{cases}\lambda^{\frac{d-1}{2\alpha}(\frac12-\frac1p)+\frac{1-\alpha}\alpha},\ 2\le p\le p_c\\
\lambda^{\frac{d-1}{2\alpha}-\frac{d}{\alpha p}+\frac{1-\alpha}\alpha},\ p_c\le p\le \infty.
  \end{cases}\end{equation}
Note that
\[\frac{d-1}{2\alpha}(\frac12-\frac1p)+\frac{1-\alpha}\alpha>\frac{d}{\alpha}(\frac12-\frac1p)-1,\ 2\le p\le p_c\]
\[\frac{d-1}{2\alpha}-\frac{d}{\alpha p}+\frac{1-\alpha}\alpha>\frac{d}{\alpha}(\frac12-\frac1p)-1,\ p_c\le p\le \infty.\]
Thus $\|QT_0\|_{2\to p}$ is the main term in \eqref{uest}. Therefore, if one can prove the following commutator estimate for $0<\alpha<2$ (namely \eqref{comm})
\begin{equation}
\|[\beta,(-\Delta)^{\alpha/2}]e_\lambda\|_{2}\ls \lambda^{1-\frac{1}{\alpha}}\|e_\lambda\|_{2}\end{equation}
then
\[\|w\|_2\ls \lambda^{\frac{\alpha-1}\alpha}\|e_\lambda\|_2\]
and Conjecture \ref{conj} follows.
 \section{Proof of the eigenfunction estimates}
 In this section, we prove Theorem \ref{main} for $\frac12<\alpha<2$. Indeed, we will prove \eqref{comm} for $\frac12<\alpha<2$, which implies the eigenfunction estimates for $\alpha$ in this range. Moreover, we will prove that \eqref{comm} is also true at $\alpha=\frac12$, up to a log factor. The proof is divided in two steps, each one involving a different argument.

\subsection{Proof when $1\le \alpha<2$ }
We need the following lemma, which is Proposition 4.2 in \cite{taylor}.
\begin{lemma}\label{lemma3}
  Let $1<p<\infty$, $m\ge0$, $d\ge1$. Given $P\in OP\mathcal{B}S_{1,1}^m$,
  \[\|[P, f] u\|_{H^{s, p}} \le C\|f\|_{H^{\sigma, p}}\|u\|_{H^{s+m-1, p}}\]
  provided
  \[\sigma>\frac{d}{p}+1, \quad s \geq 0, \quad s+m \le \sigma.\]
  Here $f$ and $u$ are defined on $\mathbb{R}^d$.
\end{lemma}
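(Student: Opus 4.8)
This is a commutator estimate asserting that $[P,f]$ has order one lower than $P$, with operator norm controlled by $\|f\|_{H^{\sigma,p}}$; the guiding heuristic is the kernel identity: with $K(x,y)$ the Schwartz kernel of $P$,
\[
[P,f]u(x)=\int K(x,y)\bigl(f(y)-f(x)\bigr)u(y)\,dy,
\]
and $f(y)-f(x)=(y-x)\cdot\int_0^1(\nabla f)(x+t(y-x))\,dt$ inserts a factor $|x-y|$ against the kernel, i.e. an extra $\partial_\xi$ on the symbol side, lowering the order by one at the cost of one derivative of $f$. This is why the hypothesis is $\sigma>d/p+1$: then $H^{\sigma,p}\hookrightarrow C^1$ with room to spare, so $\nabla f$ (and a bit more) is controlled by $\|f\|_{H^{\sigma,p}}$.

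To make this rigorous and to reach all $s\ge 0$, I would use the Bony paraproduct decomposition $fu=T_fu+T_uf+R(f,u)$, hence also $f\,(Pu)=T_f(Pu)+T_{Pu}f+R(f,Pu)$, giving
\[
[P,f]u=\bigl(PT_f-T_fP\bigr)u+\bigl(P(T_uf)-T_{Pu}f\bigr)+\bigl(PR(f,u)-R(f,Pu)\bigr).
\]
The first, main term is symbolic: $T_f=\tilde f(x,D)$ with $\tilde f(x,\xi)=\sum_j(S_{j-N}f)(x)\psi_j(\xi)$ is of type $(1,1)$ and order $0$, with symbol seminorms $\lesssim\|f\|_{C^1}\lesssim\|f\|_{H^{\sigma,p}}$; by the type-$(1,1)$ composition calculus $PT_f$ and $T_fP$ have the common principal symbol $p\,\tilde f$, so $[P,T_f]\in OP\mathcal{B}S_{1,1}^{m-1}$, and the Sobolev mapping property of such operators (for $s\ge 0$) gives $\|[P,T_f]u\|_{H^{s,p}}\lesssim\|f\|_{H^{\sigma,p}}\|u\|_{H^{s+m-1,p}}$. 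The two remaining terms I would estimate by hand with Littlewood--Paley: for $P(T_uf)-T_{Pu}f$, the $j$-th summand of $T_uf$ is the frequency-$2^j$ block $(S_{j-N}u)\Delta_jf$, and writing the difference as $\sum_j[P,(S_{j-N}u)\,\cdot\,]\Delta_jf+\sum_j\bigl(S_{j-N}(Pu)-PS_{j-N}u\bigr)\Delta_jf$, the first sum gains one derivative (commutator with multiplication on a single dyadic block) and the second involves the smoothing commutator $[P,S_{j-N}]$; summing the near-orthogonal blocks in $H^{s,p}$ uses $s\ge 0$ and $s+m\le\sigma$, with the $u$-factor absorbed via $\|S_{j-N}u\|_{L^\infty}$ after moving the summation onto $f$ (again using $\sigma>d/p+1$). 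The resonant term $PR(f,u)-R(f,Pu)$ is the delicate piece: its summands $\Delta_jf\,\Delta_ku$ with $|j-k|\le N$ are spectrally supported in balls of radius $\sim2^j$, exactly the configuration where type-$(1,1)$ operators can fail to be $L^p$-bounded; one gains the derivative from the commutator difference $P(\Delta_jf\,\Delta_ku)-\Delta_jf\,\Delta_k(Pu)$ and closes the sum using $s+m\le\sigma$ and $s\ge 0$.

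The principal obstacle is the borderline type-$(1,1)$ nature of $P$: the symbol cannot be differentiated in $x$ without losing a derivative, so the ``commutator lowers the order by one'' mechanism must be carried out through careful frequency bookkeeping rather than a one-line symbolic expansion; moreover, the failure of $L^p$-boundedness of type-$(1,1)$ operators at the natural endpoint is precisely what forces $s\ge 0$, so the $s=0$ case in particular requires either working in the good (twisted-diagonal) subclass --- presumably the role of $\mathcal{B}$ --- or a separate duality argument. The remaining technical point is to extract from the single norm $\|f\|_{H^{\sigma,p}}$ both the one derivative of smoothness needed for the main term and the summability needed for the error terms, all under the constraint $s+m\le\sigma$.
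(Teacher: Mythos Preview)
The paper does not prove this lemma at all: it is simply quoted as Proposition~4.2 from Taylor~\cite{taylor} and used as a black box. So there is no ``paper's own proof'' to compare against.

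That said, your sketch is the right strategy and is essentially how Taylor (and Meyer before him) proves such estimates: decompose $fu$ via the Bony paraproduct, identify $[P,T_f]$ as the main symbolic piece where the order-lowering happens, and handle the paraproduct $T_uf$ and the resonant remainder $R(f,u)$ by direct Littlewood--Paley bookkeeping under the constraints $s\ge 0$ and $s+m\le\sigma$. Your identification of the roles of the hypotheses is accurate: $\sigma>d/p+1$ gives $f\in C^{1+\varepsilon}$ so that $T_f$ has controlled $S^0_{1,1}$ seminorms, $s\ge 0$ is what the $OP\mathcal{B}S^m_{1,1}$ mapping theorem needs, and $s+m\le\sigma$ is what allows the $f$-factor to absorb the top-order derivatives in the error terms. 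The only caveat is that your treatment of $P(T_uf)-T_{Pu}f$ and of the resonant piece is still heuristic; turning these into honest estimates requires the almost-orthogonality arguments spelled out in Taylor's text (or in Meyer's original work), and in particular the spectral support condition defining $\mathcal{B}S^m_{1,1}$ is exactly what makes the $s=0$ endpoint go through for the main term. If you want a self-contained proof rather than a citation, following Taylor's Chapter~3--4 arguments line by line is the cleanest route.
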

By definition, an element $p(x,D)$ of $OPS_{1,\delta}^m$ has symbol $p(x,\xi)$, satisfying
\[\left|D_{x}^{\beta} D_{\xi}^{\alpha} p(x, \xi)\right| \leq C_{\alpha \beta}(1+|\xi|)^{m-|\alpha|+\delta|\beta|}.\]
The class $O P \mathcal{B} S_{1,1}^{m}$ consists of operators with symbol $p(x, \xi) \in S_{1,1}^{m}$, satisfying
\[\operatorname{supp} \hat{p}(\eta, \xi) \subset\{(\eta, \xi) :|\eta| \leq \rho|\xi|\}\]
for some $\rho<1$. This class was introduced by \cite{Mey81} and contains the paradifferential operators introduced in \cite{Bon81}. We note that $OP\mathcal{B}S^m_{1,1}$ contains all the operator classes $OPS^m_{1,\delta}$ $\delta < 1$, at least modulo smoothing operators.
\begin{proposition}\label{prop1}
  For any $0<\alpha<2$, we have
  \begin{equation}
    \|[\beta,(-\Delta)^{\alpha/2}]u\|_{2}\ls\|u\|_{H^{\alpha-1}}.
  \end{equation}
\end{proposition}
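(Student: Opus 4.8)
The plan is to pass to the Fourier multiplier picture and peel off the low frequencies, the point being that the symbol $|\xi|^{\alpha}$ is smooth away from the origin but only continuous at $\xi=0$, so $(-\Delta)^{\alpha/2}$ does not belong to any H\"ormander class $OPS^{m}_{1,\delta}$. Fix $\chi\in C_c^{\infty}(\mathbb{R}^d)$ with $\chi\equiv 1$ near $0$ and split
\[
(-\Delta)^{\alpha/2}=A_0+A_1,\qquad A_0=\chi(D)(-\Delta)^{\alpha/2},\qquad A_1=(1-\chi(D))(-\Delta)^{\alpha/2}.
\]
Here $A_0$ is the Fourier multiplier with the bounded, compactly supported symbol $\chi(\xi)|\xi|^{\alpha}$, whereas $A_1$ is the Fourier multiplier with symbol $m_1(\xi):=(1-\chi(\xi))|\xi|^{\alpha}$, which is smooth on all of $\mathbb{R}^d$, vanishes near $0$, and coincides with $|\xi|^{\alpha}$ for large $|\xi|$; thus $m_1\in S^{\alpha}_{1,0}$. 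It suffices to bound $[\beta,A_0]$ and $[\beta,A_1]$ as operators $H^{\alpha-1}\to L^2$, uniformly for $0<\alpha<2$.

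For the low-frequency piece I would not use the commutator structure at all. Since $|\chi(\xi)|\xi|^{\alpha}|\ls 1$ with support in a fixed ball, $A_0$ is bounded $H^{s}\to H^{t}$ for \emph{all} real $s,t$; in particular $A_0\colon H^{\alpha-1}\to L^2$. Multiplication by $\beta\in C_c^{\infty}$ is bounded on $H^{s}$ for every real $s$ (for $s<0$ by duality). Hence $\beta A_0$ and $A_0\beta$ are each bounded $H^{\alpha-1}\to L^2$, and therefore so is $[\beta,A_0]$.

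For the high-frequency piece I would invoke Lemma \ref{lemma3}. Being a Fourier multiplier whose ($x$-independent) symbol lies in $S^{\alpha}_{1,0}$, the operator $A_1$ belongs to $OP\mathcal{B}S^{\alpha}_{1,1}$: the spectral-support condition on $\widehat{m_1}(\eta,\xi)$ is automatic when the symbol does not depend on $x$, and in any case this is covered by the remark preceding Proposition \ref{prop1} (any smoothing remainder being absorbed exactly as in the previous paragraph). Now apply Lemma \ref{lemma3} with $p=2$, $m=\alpha$, $s=0$, and $\sigma=\tfrac{d}{2}+2$: the hypotheses $\sigma>\tfrac{d}{2}+1$, $s\ge 0$, $s+m\le\sigma$ hold because $\alpha<2\le\tfrac{d}{2}+2$, and $\beta\in C_c^{\infty}\subset H^{\sigma,2}$, so
\[
\|[\beta,A_1]u\|_{L^2}=\|[A_1,\beta]u\|_{H^{0,2}}\le C\,\|\beta\|_{H^{\sigma,2}}\,\|u\|_{H^{\alpha-1}}\ls \|u\|_{H^{\alpha-1}}.
\]
Adding the two contributions gives the proposition, the estimate being first obtained for Schwartz $u$ and then extended to $H^{\alpha-1}$ by density.

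The only slightly delicate point is placing $A_1$ in the paradifferential class $OP\mathcal{B}S^{\alpha}_{1,1}$ so that Lemma \ref{lemma3} applies, but for an $x$-independent symbol this is immediate, so there is really no serious obstacle here. It is worth noting, however, \emph{why} one cannot avoid the symbolic input: the kernel of $[\beta,(-\Delta)^{\alpha/2}]$ is $c_{d,\alpha}\bigl(\beta(y)-\beta(x)\bigr)|x-y|^{-d-\alpha}$ (a principal value when $\alpha\ge 1$), which behaves like $|x-y|^{1-d-\alpha}$ near the diagonal and hence is not locally integrable once $\alpha\ge 1$; so no purely kernel-theoretic argument can replace Lemma \ref{lemma3} in the range of primary interest, and the gain of one derivative in the commutator is a genuinely symbolic phenomenon.
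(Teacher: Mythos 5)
Your argument is correct, and it reaches the same key input as the paper (Lemma \ref{lemma3}) through a different decomposition. The paper regularizes the symbol $|\xi|^{\alpha}$ at the origin by comparing $(-\Delta)^{\alpha/2}$ with the Bessel-type operator $L=(I-\Delta)^{\alpha/2}\in OPS^{\alpha}_{1,0}$, applies Lemma \ref{lemma3} to $[L,\beta]$, and then shows that the difference $L-(-\Delta)^{\alpha/2}$ is a Fourier multiplier bounded from $H^{\alpha-1}$ to $L^2$ (by checking that $((1+|\xi|^2)^{\alpha/2}-|\xi|^{\alpha})(1+|\xi|^2)^{-(\alpha-1)/2}$ is bounded), so that its commutator with $\beta$ is controlled once one knows $\|\beta u\|_{H^{\alpha-1}}\ls\|u\|_{H^{\alpha-1}}$ --- a multiplication estimate the paper proves by hand on the Fourier side. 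You instead split off the low frequencies with $\chi(D)$: your high-frequency symbol $(1-\chi(\xi))|\xi|^{\alpha}$ lands directly in $S^{\alpha}_{1,0}$ with no quotient to estimate, and your low-frequency remainder is infinitely smoothing, so its contribution is immediate. The trade-offs are minor: both routes ultimately need multiplication by $\beta$ to be bounded on $H^{\alpha-1}$ (in your case to handle $A_0\beta u$ when $\alpha<1$, which you correctly flag as a duality argument for negative exponents, and which the paper proves explicitly), and both need the observation that an $x$-independent symbol trivially satisfies the spectral-support condition defining $OP\mathcal{B}S^{\alpha}_{1,1}$. Your closing remark on the non-integrability of the commutator kernel for $\alpha\ge 1$ is accurate and correctly identifies why a purely kernel-based proof is unavailable in that range.
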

\begin{proof}
  Let $L=(I-\Delta)^{\alpha/2}$ be a pseudo differential operator of order $\alpha$ (i.e. $L\in OPS_{1,0}^\alpha$). Hence $L\in OP\mathcal{B}S_{1,1}^\alpha$. By Lemma \ref{lemma3}, we have the following commutator estimate:
 \begin{equation}\label{taylor}
\|[L, \beta] u\|_{2} \leq C\|\beta\|_{H^{\sigma}}\|u\|_{H^{\alpha-1}}
\end{equation}
given:
\begin{equation}
\sigma>\frac{d}{2}+1, \quad 0\le \alpha \leq \sigma.
\end{equation}
 Since $\beta\in C_0^\infty$, we may choose $\sigma$ sufficiently large. So it suffices to show
  \begin{equation}\label{cha}
    \|[L-(-\Delta)^{\alpha/2}, \beta]u\|_{L^2}\ls\|u\|_{H^{\alpha-1}}.
  \end{equation}Note that $L-(-\Delta)^{\alpha/2}$ is a Fourier mutiplier $$(1+|\xi|^2)^{\frac{\alpha}{2}}-|\xi|^\alpha= \frac{(1+|\xi|^2)^{\frac{\alpha}{2}}-|\xi|^\alpha} {(1+|\xi|^2)^{\frac{\alpha-1}{2}}}(1+|\xi|^2)^{\frac{\alpha-1}{2}}=m(\xi)(1+|\xi|^2)^{\frac{\alpha-1}{2}}$$
A direct calculation shows that $m(\xi) \in L^\infty$. Then by Plancherel theorem
\begin{equation}\label{pl}
  \|(L-(-\Delta)^{\alpha/2})u\|_{2}\ls\|(1+|\xi|^2)^{\frac{\alpha-1}2}\hat u(\xi)\|_2=\|u\|_{H^{\alpha-1}}
\end{equation}
Then to prove \eqref{cha}, we only need to show
  \[\|\beta u \|_{H^{\alpha-1}}\ls \|u\|_{H^{\alpha-1}}.\]
Let $a=\alpha-1$. \begin{align*}
  \|\beta u \|_{H^{a}}^2&\ls\int |\hat\beta*\hat u(\xi)|^2(1+|\xi|^2)^ad\xi\\
  &\ls \int|\int\hat\beta(\eta)\hat u(\xi-\eta)d\eta|^2(1+|\xi|^2)^ad\xi\\
  &\ls \int\int|\hat\beta(\eta)||\hat u(\xi-\eta)|^2d\eta(1+|\xi|^2)^ad\xi\\
  &= \int|\hat\beta(\eta)|\int|\hat u(\xi)|^2(1+|\xi+\eta|^2)^ad\xi d\eta\\
\end{align*}
When $a\ge0$, we have
\[(1+|\xi+\eta|^2)^a\ls (1+|\xi|^2)^a(1+|\eta|^2)^a,\]
and when $a<0$, we have
\[(1+|\xi+\eta|^2)^a\ls (1+|\xi|^2)^a(1+|\eta|^2)^{-a}.\]
Thus the desired inequality $\|\beta u \|_{H^{a}}\ls \|u\|_{H^{a}}$ follows from the fact that $\hat\beta$ is a Schwartz function.
\end{proof}
We need the gradient estimate for the heat kernel to estimate $\|e_\lambda\|_{H^{\alpha-1}}$. It follows from \cite{ryznar}, Theorem 1.1 and Example 5.1.
\begin{lemma} \label{grad}Let $0<\alpha<2$. Let $\Omega$ be any open, nonempty set in $\mathbb{R}^d$. Let $p_\Omega(t,x,y)$ be the heat kernel of the Dirichlet fractional Laplacian $-(-\Delta)^{\alpha/2}|_\Omega$. Then
\[|\nabla_x p_\Omega(t,x,y)|\le \frac{c}{\delta_\Omega(x)\wedge t^{1/\alpha}}p_\Omega(t,x,y),\ x,y\in\Omega,\ t\in(0,1]\]
where $c=c(d,\alpha)$ and $\delta_\Omega(x)=dist(x,\Omega^c)$.
\end{lemma}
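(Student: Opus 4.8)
This estimate is quoted from \cite{ryznar}, so what follows is a sketch of the mechanism behind it rather than a self-contained argument. The plan is to combine a gradient bound for the \emph{free} fractional heat kernel with an interior regularity estimate for the nonlocal heat equation, and then to absorb the localization errors using the two-sided bounds of Lemma \ref{heat}. The free input is elementary: the transition density of the isotropic $\alpha$-stable process obeys the scaling relation $p(t,x,y)=t^{-d/\alpha}g(t^{-1/\alpha}(x-y))$ with $g$ smooth, strictly positive, and satisfying $|\nabla g|\ls g$ on all of $\mathbb R^d$ (near the origin because $g\approx 1$, at infinity because $g(\xi)\approx|\xi|^{-d-\alpha}$), so that $|\nabla_x p(t,x,y)|\ls t^{-1/\alpha}p(t,x,y)$ --- the model form of the estimate in the whole space.

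Next I would localize. Fix $y\in\Omega$, $t\in(0,1]$ and $x_0\in\Omega$, and set $r=\tfrac12(\delta_\Omega(x_0)\wedge t^{1/\alpha})$, so that $\overline{B(x_0,r)}\subset\subset\Omega$. Regarded as a function of $(s,x)$ with $s$ near $t$, the kernel $u(s,x):=p_\Omega(s,x,y)$ (extended by $0$ outside $\Omega$) solves $\partial_s u+(-\Delta)^{\alpha/2}u=0$ on the parabolic cylinder $Q=(t-r^\alpha,t)\times B(x_0,r)$, whose spatial projection is compactly contained in $\Omega$. On such a cylinder the equation differs from the free one only through lower-order contributions, and the scale-invariant interior gradient estimate for it (obtained in \cite{ryznar} through a Duhamel perturbation series built on $p$) gives
\[|\nabla_x p_\Omega(t,x_0,y)|\ls r^{-1}\bigl(\sup_{(s,z)\in Q}p_\Omega(s,z,y)+r^\alpha\sup_{t-r^\alpha<s<t}\int_{|z-x_0|>r}\frac{p_\Omega(s,z,y)}{|z-x_0|^{d+\alpha}}\,dz\bigr).\]
Since $r^{-1}\approx(\delta_\Omega(x_0)\wedge t^{1/\alpha})^{-1}$, the remaining task is to bound the bracket by $p_\Omega(t,x_0,y)$.

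For the bracket I would argue as follows. Because $r\le\delta_\Omega(x_0)/2$ and $\delta_\Omega$ is $1$-Lipschitz, $\delta_\Omega(z)\approx\delta_\Omega(x_0)$ on $B(x_0,r)$, and since $r\le t^{1/\alpha}/2$ one has $s\approx t$ on $Q$; inspecting the three factors in Lemma \ref{heat} then yields $p_\Omega(s,z,y)\approx p_\Omega(t,x_0,y)$ for $(s,z)\in Q$, which handles the first term. For the tail I would split $\{|z-x_0|>r\}$ into the part near $x_0$, the part near $y$, and the part far from both, estimate $p_\Omega(s,z,y)$ on each piece from Lemma \ref{heat} (using $p_\Omega\le p$ where convenient and, crucially, keeping $1\wedge\frac{\delta_\Omega(z)^{\alpha/2}}{\sqrt s}\le 1$ rather than the linear bound once $|z-x_0|\gs t^{1/\alpha}$), integrate, and multiply by $r^\alpha$; in each regime the outcome is $\ls p_\Omega(t,x_0,y)$. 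Combining, $|\nabla_x p_\Omega(t,x_0,y)|\ls(\delta_\Omega(x_0)\wedge t^{1/\alpha})^{-1}p_\Omega(t,x_0,y)$, as claimed.

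The main obstacle is the tail estimate in the boundary layer $\delta_\Omega(x_0)<t^{1/\alpha}$ when $|x_0-y|$ is comparable to $\delta_\Omega(x_0)$: the naive bound $\delta_\Omega(z)^{\alpha/2}\ls|z-x_0|^{\alpha/2}$ overcounts the contribution of $z$ far from $\partial\Omega$, and one recovers exactly the factor $p_\Omega(t,x_0,y)$ only after exploiting the saturation of the boundary factor in Lemma \ref{heat}, so the boundary behaviour and the nonlocality have to be handled together. The second point requiring care is the scale-invariant interior gradient estimate for the nonlocal heat equation in the displayed form, tail term included; this is standard in the nonlocal parabolic regularity literature but is packaged in \cite{ryznar} through the perturbation series rather than by PDE regularity theory.
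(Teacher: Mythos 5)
The paper does not prove this lemma: it is imported verbatim from \cite{ryznar} (Theorem 1.1 and Example 5.1), so there is no internal argument to measure your sketch against, and treating the statement as a quoted result is the right call. Two remarks on the sketch itself. First, as written it could at best recover the $C^{1,1}$ case, not the statement for an arbitrary open, nonempty $\Omega$: every step after the localization leans on the two-sided bounds of Lemma \ref{heat} (to identify $\sup_Q p_\Omega$ with $p_\Omega(t,x_0,y)$ and to control the tail integral), and those bounds require $C^{1,1}$ regularity. The argument in \cite{ryznar} is structured precisely to avoid two-sided Dirichlet bounds: it starts from the strong Markov/Ikeda--Watanabe decomposition $p_\Omega(t,x,y)=p_B(t,x,y)+\mathbb{E}^x\bigl[p_\Omega(t-\tau_B,X_{\tau_B},y);\,\tau_B<t\bigr]$ for a small ball $B=B(x,r)$ with $r\approx\delta_\Omega(x)\wedge t^{1/\alpha}$, differentiates in $x$ using gradient estimates for the free kernel and for $p_B$, and compares back to $p_\Omega(t,x,y)$ by arguments valid for every open set. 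Second, the displayed ``scale-invariant interior gradient estimate'' with the nonlocal tail term is the entire analytic content of the lemma, and you assert it rather than derive it; the subsequent verification that the bracket is $\ls p_\Omega(t,x_0,y)$ is carried out correctly in spirit (including the correct identification of the delicate boundary-layer regime where the factor $1\wedge\delta_\Omega(z)^{\alpha/2}/\sqrt{s}$ must be allowed to saturate), but it rests on that black box. For the purposes of this paper, where $\Omega$ is $C^{1,1}$ throughout, the loss of generality is harmless; still, the lemma should continue to rest on the citation rather than on this heuristic.
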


\begin{theorem}\label{thm1} If $e_\lambda$ satisfies \eqref{eigen}, then for $1\le \alpha<2$ we have
  \[\|e_\lambda\|_{H^{\alpha-1}}\ls \lambda^{1-\frac1\alpha}\|e_\lambda\|_2.\]
\end{theorem}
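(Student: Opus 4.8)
The plan is to control the Sobolev norm $\|e_\lambda\|_{H^{\alpha-1}}$ for $1\le\alpha<2$ (so that the regularity index $\alpha-1\in[0,1)$) by exploiting the fixed-point representation $P_t^\Omega e_\lambda=e^{-1}e_\lambda$ at the scale $t=\lambda^{-1}$ together with the gradient bound for the heat kernel from Lemma \ref{grad}. Since $\alpha-1<1$, I expect to interpolate: the $L^2$ norm of $e_\lambda$ is exactly $\|e_\lambda\|_2$, and I will bound $\|\nabla e_\lambda\|_2$ on the compact set where $e_\lambda$ lives (modulo the nonlocal tail), then interpolate between $H^0$ and $H^1$ to land at $H^{\alpha-1}$ with the gain $\lambda^{(\alpha-1)/\alpha}\cdot 1 = \lambda^{1-1/\alpha}$, matching the claimed exponent because $H^1$ should cost $\lambda^{1/\alpha}$ at time $t=\lambda^{-1}$.

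Concretely, first I would write $e_\lambda = e\,P_t^\Omega e_\lambda$ and differentiate under the integral sign: $\nabla_x e_\lambda(x) = e\int_\Omega \nabla_x p_\Omega(t,x,y)\,e_\lambda(y)\,dy$. Applying Lemma \ref{grad}, for $x$ in the support region of interest we have $\delta_\Omega(x)\gtrsim 1$ while $t^{1/\alpha}=\lambda^{-1/\alpha}\ll 1$, so $\delta_\Omega(x)\wedge t^{1/\alpha} = t^{1/\alpha}$, giving the pointwise bound $|\nabla_x p_\Omega(t,x,y)|\lesssim \lambda^{1/\alpha}\,p_\Omega(t,x,y)$. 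Hence $|\nabla e_\lambda(x)|\lesssim \lambda^{1/\alpha}\int_\Omega p_\Omega(t,x,y)|e_\lambda(y)|\,dy$, and since $p_\Omega(t,\cdot,\cdot)$ is (by Lemma \ref{heat}) dominated by an $L^1$-normalized kernel, Young's inequality yields $\|\nabla e_\lambda\|_{L^2(\text{core})}\lesssim \lambda^{1/\alpha}\|e_\lambda\|_2$. The only subtlety is that $\|e_\lambda\|_{H^{\alpha-1}}$ is a global norm on $\mathbb{R}^d$, but $e_\lambda$ is supported in $\bar\Omega$ and is smooth inside; a standard localization (cut-off plus the boundedness of multiplication by a Schwartz-type function on $H^{\alpha-1}$, exactly as in the proof of Proposition \ref{prop1}) reduces the global norm to a local one plus lower-order contributions absorbed into $\|e_\lambda\|_2$.

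Then I would interpolate. Using $\|u\|_{H^{a}} \lesssim \|u\|_{L^2}^{1-a}\|u\|_{H^1}^{a}$ for $0\le a\le 1$ with $a=\alpha-1$, and the two bounds $\|e_\lambda\|_2 = \|e_\lambda\|_2$ and (after localization) $\|e_\lambda\|_{H^1}\lesssim \lambda^{1/\alpha}\|e_\lambda\|_2$, we get
\[
\|e_\lambda\|_{H^{\alpha-1}} \lesssim \|e_\lambda\|_2^{2-\alpha}\big(\lambda^{1/\alpha}\|e_\lambda\|_2\big)^{\alpha-1} = \lambda^{(\alpha-1)/\alpha}\|e_\lambda\|_2 = \lambda^{1-\frac1\alpha}\|e_\lambda\|_2,
\]
as desired. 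The main obstacle I anticipate is not the interpolation but the careful handling of the nonlocal tail: $e_\lambda$ does not vanish outside $K_1$, so when estimating $\nabla e_\lambda$ on the core region one must split the $y$-integral into the part where $\delta_\Omega(y)$ is comparable to $1$ (handled directly) and the boundary layer where $\delta_\Omega(y)$ is small; there the heat kernel factor $\big(1\wedge \delta_\Omega(y)^{\alpha/2}/\sqrt t\big)$ decays, and one needs this decay together with the $L^2$-Cauchy--Schwarz in $y$ to confirm that the boundary contribution is genuinely lower order and does not spoil the $\lambda^{1/\alpha}$ bound. Once that bookkeeping is in place, combining it with the localization lemma and the interpolation inequality finishes the proof.
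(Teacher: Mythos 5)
Your overall architecture is the same as the paper's: interpolate $\|e_\lambda\|_{H^{\alpha-1}}\le\|e_\lambda\|_2^{2-\alpha}\|e_\lambda\|_{H^1}^{\alpha-1}$, represent $\nabla e_\lambda$ through $e^{-t\lambda}\nabla e_\lambda(x)=\int\nabla_xp_\Omega(t,x,y)e_\lambda(y)\,dy$ at $t=\lambda^{-1}$, and invoke Lemma \ref{grad}. The gap is in your localization step. The theorem asserts a bound on the \emph{global} $H^{\alpha-1}$ norm, and this is what is actually needed downstream, since Proposition \ref{prop1} feeds $\|e_\lambda\|_{H^{\alpha-1}}$ (a norm on all of $\mathbb{R}^d$) into the nonlocal commutator. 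For $1<\alpha<2$ the space $H^{\alpha-1}$ is strictly stronger than $L^2$, so the near-boundary piece cannot be ``absorbed into $\|e_\lambda\|_2$'': writing $e_\lambda=\beta e_\lambda+(1-\beta)e_\lambda$, the multiplication lemma gives $\|(1-\beta)e_\lambda\|_{H^{\alpha-1}}\lesssim\|e_\lambda\|_{H^{\alpha-1}}$, which is the wrong direction and circular. There is no a priori reason the boundary layer contribution is lower order; controlling it is the actual content of the proof.

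Quantitatively, the difficulty sits exactly where you restrict away from. Near $\partial\Omega$ the bound of Lemma \ref{grad} degenerates to $|\nabla_xp_\Omega(t,x,y)|\lesssim\delta_\Omega(x)^{-1}p_\Omega(t,x,y)$, and combining with Lemma \ref{heat} one finds
\[
\int_\Omega|\nabla_xp_\Omega(t,x,y)|\,dy\;\lesssim\;t^{-1/\alpha}\Bigl(1\wedge\frac{\delta_\Omega(x)}{t^{1/\alpha}}\Bigr)^{\frac{\alpha-2}{2}},
\]
which blows up as $\delta_\Omega(x)\to0$ since $\alpha<2$; hence the unweighted Young/Schur argument you propose does not close on all of $\Omega$. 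The paper resolves this with a weighted Schur test, taking $q(x)=\bigl(1\wedge\delta_\Omega(x)/t^{1/\alpha}\bigr)^{-1/2}$, decomposing the boundary layer into dyadic shells $D_j=\{\delta_\Omega\approx2^{-j}\}$ and using $|D_j\cap B(y,r)|\lesssim2^{-j}r^{d-1}$; the resulting sums $\sum_j2^{\frac{1-\alpha}{2}j}$ over $2^{-j}<t^{1/\alpha}$ converge precisely because $\alpha>1$, which is where the hypothesis $1\le\alpha<2$ genuinely enters. Your sketch never uses $\alpha\ge1$ in the gradient estimate, which is a sign that the boundary bookkeeping is missing; as written you have only an interior gradient bound, not the global $H^1$ (hence $H^{\alpha-1}$) control the theorem claims.
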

\begin{proof}
  It is clear when  $\alpha=1$. Assume $1<\alpha<2$. By Holder's inequality,
  \[\|e_\lambda\|_{H^{\alpha-1}}\le \|e_\lambda\|_2^{2-\alpha}\|e_\lambda\|_{H^1}^{\alpha-1}.\]
  Note that
  \[\|e_\lambda\|_{H^1}^2= \|e_\lambda\|_2^2+\|\nabla e_\lambda\|_2^2.\]
  It suffices to prove the following gradient estimate for $1<\alpha<2$
  \begin{equation}\label{gradu}
    \|\nabla e_\lambda\|_2\ls \lambda^{\frac1\alpha}\|e_\lambda\|_2.
  \end{equation}
  Since $P_t^\Omega e_\lambda=e^{-t\lambda}e_\lambda$, we have
  \[e^{-t\lambda}\nabla e_\lambda(x)=\int \nabla_xp_\Omega(t,x,y)e_\lambda(y)dy.\]
If we set $t=\lambda^{-1}$, then to estimate $\|\nabla e_\lambda\|_2$ we only need to compute the $L^2\to L^2$ norm of the operator associated with the kernel $\nabla_xp_\Omega(t,x,y)$. We may use Schur's test to compute the operator norm. Let
\[q(x)=\Big(1\wedge \frac{\delta_\Omega(x)}{t^{1/\alpha}}\Big)^{-1/2}.\]
Then using Lemma \ref{heat} and Lemma \ref{grad}, we claim that
\[\int_\Omega |\nabla_xp_\Omega(t,x,y)| dy\ls t^{-1/\alpha}q(x)\]
\[\int_\Omega |\nabla_xp_\Omega(t,x,y)|q(x)dx\ls t^{-1/\alpha}\]
when $1<\alpha<2$. Indeed,
\begin{align*}
\int_\Omega |\nabla_xp_\Omega(t,x,y)| dy&\ls t^{-1/\alpha}\Big(1\wedge \frac{\delta_\Omega(x)}{t^{1/\alpha}}\Big)^{\frac{\alpha-2}2}\int t^{-d/\alpha}\wedge \frac{t}{|x-y|^{d+\alpha}}dy\\
  &\ls t^{-1/\alpha}\Big(1\wedge \frac{\delta_\Omega(x)}{t^{1/\alpha}}\Big)^{\frac{\alpha-2}2}\\
  &\ls t^{-1/\alpha}q(x)\ .
\end{align*}
Let $D_j=\{x\in \Omega:\delta_\Omega(x)\approx2^{-j}\}$, $j\in\mathbb{N}$. Let $B(y,r)=\{x\in\Omega: |x-y|<r\}$, $y\in \Omega$, $r>0$. Then
\[|D_j\cap B(y,r)|\ls 2^{-j}r^{d-1}\]
since $\Omega$ is $C^1$. So we have
\begin{align*}
  \int_\Omega |\nabla_xp_\Omega(t,x,y)|q(x)dx&\ls t^{-1/\alpha}\int \Big(1\wedge \frac{\delta_\Omega(x)}{t^{1/\alpha}}\Big)^{\frac{\alpha-3}2}\Big(t^{-d/\alpha}\wedge \frac{t}{|x-y|^{d+\alpha}}\Big)dx\\
  &:= t^{-1/\alpha}(I_1+I_2)
\end{align*}
where
\begin{align*}
  I_1&=\int_{|x-y|<t^{1/\alpha}}\Big(1\wedge \frac{\delta_\Omega(x)}{t^{1/\alpha}}\Big)^{\frac{\alpha-3}2}t^{-d/\alpha}dx\\
  &\ls t^{-d/\alpha}\int_{|x-y|<t^{1/\alpha},\ \delta_\Omega(x)<t^{1/\alpha}}\Big(\frac{\delta_\Omega(x)}{t^{1/\alpha}}\Big)^{\frac{\alpha-3}2}dx+1\\
  &\ls  t^{-d/\alpha}t^{\frac{3-\alpha}{2\alpha}}\sum_{2^{-j}<t^{1/\alpha}}\int_{D_j\cap B(y,t^{1/\alpha})}2^{\frac{3-\alpha}2j}dx+1\\
  &\ls t^{-d/\alpha}t^{\frac{3-\alpha}{2\alpha}}\sum_{2^{-j}<t^{1/\alpha}}2^{\frac{3-\alpha}2j}\cdot 2^{-j}t^{\frac{d-1}\alpha}+1\\
  &\ls 1
\end{align*}
and
\begin{align*}
  I_2&=\int_{|x-y|\ge t^{1/\alpha}}\Big(1\wedge \frac{\delta_\Omega(x)}{t^{1/\alpha}}\Big)^{\frac{\alpha-3}2}\frac{t}{|x-y|^{d+\alpha}}dx\\
  &\ls \int_{|x-y|\ge t^{1/\alpha},\  \delta_\Omega(x)<t^{1/\alpha}}\Big(\frac{\delta_\Omega(x)}{t^{1/\alpha}}\Big)^{\frac{\alpha-3}2}\frac{t}{|x-y|^{d+\alpha}}dx+1\\
  &\ls \sum_{2^{-j}<t^{1/\alpha}}\sum_{2^{-k}\ge t^{1/\alpha}}\int_{D_j\cap B(y,2^{-k})}2^{\frac{3-\alpha}2j}t^{\frac{3-\alpha}{2\alpha}}\cdot t2^{k(d+\alpha)}dx+1\\
  &\ls \sum_{2^{-j}<t^{1/\alpha}}\sum_{2^{-k}\ge t^{1/\alpha}}2^{\frac{3-\alpha}2j}t^{\frac{3-\alpha}{2\alpha}}\cdot t2^{k(d+\alpha)}\cdot 2^{-j}2^{-k(d-1)}+1\\
  &\ls 1.
\end{align*}
Then the claim is proved. By Schur's test, the operator norm $\ls t^{-1/\alpha}=\lambda^\frac1\alpha$. This gives \eqref{gradu} and completes the proof.
\end{proof}
So when $1\le \alpha<2$, the commutator estimate \eqref{comm} follows from Proposition \ref{prop1} and Theorem \ref{thm1}.

\begin{remark}{\rm It is  an interesting open problem whether  \[\|e_\lambda\|_{H^{\alpha-1}}\ls \lambda^{1-\frac1\alpha}\|e_\lambda\|_2\]
still holds for $0<\alpha<1$ and $e_\lambda$ satisfying \eqref{eigen}. If it is true, then the commutator estimate \eqref{comm} follows. However, it is still possible to prove \eqref{comm} in a different way when $\alpha<1$. In particular, we will prove it when $\frac12<\alpha<1$ in the following.
}\end{remark}
\subsection{Proof when $1/2<\alpha<1$}
We need the following lemma about the nonlocal behavior of $(-\Delta)^{\alpha/2}e_\lambda$ in $\mathbb{R}^d$.
\begin{lemma}\label{lemma5}
  If $e_\lambda$ satisfies \eqref{eigen}, then for $0<\alpha<1$ we have
  \[\|(-\Delta)^{\alpha/2}e_\lambda\|_2\approx \lambda\|e_\lambda\|_2.\]
\end{lemma}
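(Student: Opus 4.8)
The plan is to prove the two-sided bound $\|(-\Delta)^{\alpha/2}e_\lambda\|_2 \approx \lambda \|e_\lambda\|_2$ for $0<\alpha<1$ by working on the Fourier side, together with the fact that $e_\lambda$ vanishes outside the bounded set $\Omega$. The starting point is the observation that, although $e_\lambda$ solves the eigenvalue equation only inside $\Omega$, the function $(-\Delta)^{\alpha/2}e_\lambda$ is globally well-defined on $\mathbb{R}^d$ (via the Fourier multiplier $|\xi|^\alpha$ applied to $\widehat{e_\lambda}$), and on $\Omega$ it equals $\lambda e_\lambda$. So I expect to split $\|(-\Delta)^{\alpha/2}e_\lambda\|_2^2 = \int_\Omega |\lambda e_\lambda|^2 + \int_{\mathbb{R}^d\setminus\Omega} |(-\Delta)^{\alpha/2}e_\lambda|^2$, which immediately gives the lower bound $\|(-\Delta)^{\alpha/2}e_\lambda\|_2 \gtrsim \lambda \|e_\lambda\|_2$ with constant $1$. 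The content is therefore entirely in the matching upper bound: one must control the contribution from the exterior region $\mathbb{R}^d\setminus\Omega$, i.e. show $\|(-\Delta)^{\alpha/2}e_\lambda\|_{L^2(\mathbb{R}^d\setminus\Omega)} \lesssim \lambda \|e_\lambda\|_2$.

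To get the upper bound, I would argue via interpolation on the Fourier side, exploiting that $0<\alpha<1$. Write $\widehat{(-\Delta)^{\alpha/2}e_\lambda}(\xi) = |\xi|^\alpha \widehat{e_\lambda}(\xi)$ and use the elementary inequality $|\xi|^\alpha \lesssim \langle\xi\rangle^\alpha \lesssim 1 + |\xi|^2$ is too crude; instead I would use $|\xi|^{2\alpha} \le \epsilon^{-1}|\xi|^2 \cdot \epsilon$ type splitting, or more cleanly the bound $|\xi|^{2\alpha} \lesssim 1 + |\xi|^2$ combined with the fact that $\alpha<1$ allows an $L^2$-interpolation: $\||\xi|^\alpha \widehat{e_\lambda}\|_2 \le \|\widehat{e_\lambda}\|_2^{1-\alpha}\, \||\xi|\,\widehat{e_\lambda}\|_2^{\alpha} = \|e_\lambda\|_2^{1-\alpha}\|\nabla e_\lambda\|_2^\alpha$ by Hölder in $\xi$. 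Then I would invoke the gradient estimate $\|\nabla e_\lambda\|_2 \lesssim \lambda^{1/\alpha}\|e_\lambda\|_2$ — this is exactly inequality \eqref{gradu}, proved in Theorem \ref{thm1} for $1<\alpha<2$, but in fact the Schur-test argument there (using Lemma \ref{heat} and Lemma \ref{grad} with $t=\lambda^{-1}$) does not use $\alpha>1$ in an essential way and yields $\|\nabla e_\lambda\|_2 \lesssim \lambda^{1/\alpha}\|e_\lambda\|_2$ for all $0<\alpha<2$ provided $\delta_\Omega$ is supported away from the boundary in the relevant integrals; one just needs the exponent $\tfrac{\alpha-2}{2}$ in the first bound and $\tfrac{\alpha-3}{2}$ in the second to still give convergent sums, which they do. Feeding this in gives $\||\xi|^\alpha\widehat{e_\lambda}\|_2 \lesssim \|e_\lambda\|_2^{1-\alpha}\cdot(\lambda^{1/\alpha}\|e_\lambda\|_2)^\alpha = \lambda \|e_\lambda\|_2$, which is precisely the desired upper bound.

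A cleaner alternative for the upper bound, which avoids relitigating the gradient estimate, is to use the heat semigroup directly: since $P_t^\Omega e_\lambda = e^{-t\lambda}e_\lambda$ with $t = \lambda^{-1}$, one has $e_\lambda = e\, P_t^\Omega e_\lambda$, and one can try to bound $\|(-\Delta)^{\alpha/2} P_t^\Omega f\|_2$ in terms of $t^{-1}\|f\|_2$ using the kernel bounds of Lemma \ref{heat} together with a pointwise estimate on $(-\Delta)^{\alpha/2}_x p_\Omega(t,x,y)$. However, such a kernel estimate for the \emph{fractional} power is less standard than the gradient estimate in Lemma \ref{grad}, so I expect the interpolation route above to be the path of least resistance. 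The main obstacle, and the only place requiring genuine care, is justifying that the interior identity $(-\Delta)^{\alpha/2}e_\lambda = \lambda e_\lambda$ on $\Omega$ and the global Fourier definition are consistent, and that the Hölder interpolation $\||\xi|^\alpha \hat u\|_2 \le \|\hat u\|_2^{1-\alpha}\||\xi|\hat u\|_2^\alpha$ is applicable — i.e. that $e_\lambda \in H^1(\mathbb{R}^d)$, which follows from the finite-propagation/heat-kernel smoothing and the fact that $e_\lambda$ is compactly supported. Once $e_\lambda \in H^1$, the rest is routine. I would remark that the two-sided nature of the estimate is genuinely special to $0<\alpha<1$: for $\alpha \ge 1$ one does not expect $\|(-\Delta)^{\alpha/2}e_\lambda\|_2 \gtrsim \lambda\|e_\lambda\|_2$ to fail, but the exterior contribution is no longer obviously negligible and the clean identity-plus-interpolation argument breaks down because $|\xi|^\alpha$ is no longer controlled by $\langle\xi\rangle$ to the first power.
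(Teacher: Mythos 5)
Your lower bound coincides with the paper's and is fine. The upper bound, however, has a genuine gap: everything funnels through $\|\nabla e_\lambda\|_2\lesssim\lambda^{1/\alpha}\|e_\lambda\|_2$, i.e.\ through $e_\lambda\in H^1(\mathbb{R}^d)$, and this is false for $0<\alpha<1$. Eigenfunctions of the restricted fractional Laplacian on a $C^{1,1}$ domain behave like $\delta_\Omega(x)^{\alpha/2}$ at the boundary (this is exactly the boundary factor appearing in Lemma \ref{heat}), so $e_\lambda\in H^s(\mathbb{R}^d)$ only for $s<\tfrac{\alpha}{2}+\tfrac12$, which is $<1$ precisely when $\alpha<1$; the factor $\||\xi|\widehat{e_\lambda}\|_2$ in your H\"older interpolation is therefore infinite. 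Your supporting claim that the Schur-test proof of \eqref{gradu} ``does not use $\alpha>1$ in an essential way'' is also incorrect on two counts: the step $\bigl(1\wedge\delta_\Omega(x)t^{-1/\alpha}\bigr)^{(\alpha-2)/2}\lesssim q(x)=\bigl(1\wedge\delta_\Omega(x)t^{-1/\alpha}\bigr)^{-1/2}$ needs $\alpha\ge1$, and the dyadic sums in $I_1,I_2$ reduce to $\sum_{2^{-j}<t^{1/\alpha}}2^{(1-\alpha)j/2}$, which converges only for $\alpha>1$. The paper itself signals this: Theorem \ref{thm1} is stated only for $1\le\alpha<2$, and the remark following it lists the $\alpha<1$ analogue of the $H^{\alpha-1}$ bound as an open problem.

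The paper's actual route avoids whole-space Sobolev norms entirely. It splits $\|(-\Delta)^{\alpha/2}e_\lambda\|_2^2$ into the interior part (which is exactly $\lambda^2\|e_\lambda\|_2^2$) and the exterior part, and for $x\notin\Omega$ uses the exact formula $(-\Delta)^{\alpha/2}e_\lambda(x)=-c_{d,\alpha}\int_\Omega e_\lambda(y)|x-y|^{-d-\alpha}\,dy$. Inserting $e_\lambda=e\,P^\Omega_{t}e_\lambda$ with $t=\lambda^{-1}$ converts this into an operator with explicit kernel $K(x,z)=\int_\Omega|x-y|^{-d-\alpha}p_\Omega(t,y,z)\,dy$; the boundary factor $\bigl(1\wedge t^{-1/2}\delta_\Omega(y)^{\alpha/2}\bigr)$ of the heat kernel tames the nonintegrable singularity of $|x-y|^{-d-\alpha}$ near $\partial\Omega$, and a Young/Schur argument with the weight $\rho(x)^{-\alpha/2}$ yields $\|K\|_{2\to2}\lesssim\lambda$ --- with the dyadic sums there converging precisely because $\alpha<1$. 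So the hypothesis $0<\alpha<1$ is essential on both sides of the comparison, and a correct proof has to engage with the $\delta_\Omega^{\alpha/2}$ boundary behavior rather than interpolate against $\|\nabla e_\lambda\|_2$.
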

\begin{proof} On the one hand, by \eqref{eigen}\begin{align*}
    \|(-\Delta)^{\alpha/2}e_\lambda\|_2^2&\ge\int_\Omega|(-\Delta)^{\alpha/2}e_\lambda|^2= \lambda^2\|e_\lambda\|_2^2.
  \end{align*}
  On the other hand, we may write
  \begin{align*}
    \|(-\Delta)^{\alpha/2}e_\lambda\|_2^2&=\int_\Omega|(-\Delta)^{\alpha/2}e_\lambda|^2+\int_{\mathbb{R}^d\setminus\Omega}|(-\Delta)^{\alpha/2}e_\lambda|^2\\
    &\le \lambda^2\|e_\lambda\|_2^2+\int_{\mathbb{R}^d\setminus\Omega}|(-\Delta)^{\alpha/2}e_\lambda|^2
  \end{align*}
It suffices to estimate the second term. For $x\notin\Omega$,
\begin{align*}
  |(-\Delta)^{\alpha/2}e_\lambda(x)|&=c_{d,\alpha}|\int_{\mathbb{R}^d}\frac{e_\lambda(x)-e_\lambda(y)}{|x-y|^{d+\alpha}}dy|\\
  &=c_{d,\alpha}|\int_{\Omega}\frac{e_\lambda(y)}{|x-y|^{d+\alpha}}dy|\\
  &=c_{d,\alpha}|\int_\Omega\int_\Omega|x-y|^{-d-\alpha}p_\Omega(t,y,z)dye_\lambda(z)dz|\\
  &:=c_{d,\alpha}|\int_\Omega K(x,z)e_\lambda(z)dz|
\end{align*}
where $p_\Omega(t,y,z)$ is heat kernel of the Dirichlet fractional Laplacian $-(-\Delta)^{\alpha/2}|_\Omega$, and $t=\lambda^{-1}$. By Lemma \ref{heat}, we have for $0<\alpha<2$
\[p_\Omega(t,y,z)\approx \Big(1\wedge t^{-1/2}\rho(y)^{\alpha/2}\Big)\Big(1\wedge t^{-1/2}\rho(z)^{\alpha/2}\Big)\Big(t^{-d/\alpha}\wedge t|y-z|^{-d-\alpha}\Big),\]
where $y,z\in\Omega$ and $\rho(y)=dist(y,\Omega^c)$. To estimate the $L^2\to L^2$ norm of $K$, we decompose its kernel
\[K(x,z)=\int_\Omega|x-y|^{-d-\alpha}p_\Omega(t,y,z)dy=\int_{\rho(y)<t^{1/\alpha}}+\int_{\rho(y)\ge t^{1/\alpha}}:=K_1(x,z)+K_2(x,z).\]
For $K_2$, we use Young's inequality. By using $|x-y|\ge\rho(y)\ge t^{1/\alpha}$, we get
\[\int_\Omega K_2(x,z)dz\ls t^{-1}\]
\[\int_{\Omega^c} K_2(x,z)dx\ls t^{-1}\]
which gives $\|K_2\|_{2\to 2}\ls t^{-1}=\lambda$.

For $K_1$, we use Schur's test. We claim that
\[\int_\Omega K_1(x,z)dz\ls t^{-1/2}\rho(x)^{-\alpha/2}\]
\[\int_{\Omega^c} K_1(x,z)\rho(x)^{-\alpha/2}dx\ls t^{-3/2}\]
where $\rho(x)=dist(x,\Omega)$. Indeed, since $|x-y|\ge\rho(y)$ and $|x-y|\ge\rho(x)$, we have
\begin{align*}
  \int_\Omega K_1(x,z)dz&\ls \int_\Omega\int_\Omega |x-y|^{-d-\alpha}\frac{\rho(y)^{\alpha/2}}{t^{1/2}}dy\Big(t^{-d/\alpha}\wedge t|y-z|^{-d-\alpha}\Big)dz\\
  &\ls t^{-1/2}\int_\Omega |x-y|^{-d-\alpha/2}dy\\
  &\ls t^{-1/2}\rho(x)^{-\alpha/2}\ .
\end{align*}
Let $D_j=\{w\in \mathbb{R}^d:\rho(w)\approx2^{-j}\}$, $j\in\mathbb{Z}$. Note that here $w$ can be in $\Omega$ or $\Omega ^c$. For $y\in \mathbb{R}^d$, $r>0$, we denote $B(y,r)=\{x\in\mathbb{R}^d: |x-y|<r\}$. Then again we have
\[|D_j\cap B(y,r)|\ls 2^{-j}r^{d-1}\]
since $\Omega$ is $C^1$. By using $|x-y|\ge \rho(y)$ and $|x-y|\ge \rho(x)$, we get
\begin{align*}
  \int_{\Omega^c}|x-y|^{-d-\alpha}\rho(x)^{-\alpha/2}dx&\ls\sum_{2^{-k}\ge\rho(y)}\sum_{2^{-j}\le2^{-k}}\int_{D_j\cap B(y,2^{-k})}2^{k(d+\alpha)}2^{j\alpha/2}dx\\
  &\ls \sum_{2^{-k}\ge\rho(y)}\sum_{2^{-j}\le2^{-k}}2^{k(d+\alpha)}2^{j\alpha/2}\cdot 2^{-j}2^{-k(d-1)}\\
  &\ls \rho(y)^{-3\alpha/2}
\end{align*}
and then
\begin{align*}
  \int_{\Omega^c} K_1(x,z)\rho(x)^{-\alpha/2}dx&\ls t^{-1/2}\int_{\rho(y)<t^{1/\alpha}} \rho(y)^{\alpha/2}\Big(t^{-d/\alpha}\wedge \frac{t}{|y-z|^{d+\alpha}}\Big)\int_{\Omega^c}|x-y|^{-d-\alpha}\rho(x)^{-\alpha/2}dxdy\\
  &\ls t^{-1/2}\int_{\rho(y)<t^{1/\alpha}} \rho(y)^{-\alpha}\Big(t^{-d/\alpha}\wedge \frac{t}{|y-z|^{d+\alpha}}\Big)dy\\
  &:= t^{-1/2}(I_1+I_2)
\end{align*}
where
\begin{align*}
  I_1&=\int_{|y-z|<t^{1/\alpha},\ \rho(y)<t^{1/\alpha}}\rho(y)^{-\alpha}t^{-d/\alpha}dy\\
  &\ls \sum_{2^{-j}<t^{1/\alpha}}\int_{D_j\cap B(z,t^{1/\alpha})}2^{j\alpha}t^{-d/\alpha}dy\\
  &\ls \sum_{2^{-j}<t^{1/\alpha}}2^{j\alpha}t^{-d/\alpha}\cdot 2^{-j}t^{\frac{d-1}\alpha}\\
  &\ls t^{-1}
\end{align*}
and
\begin{align*}
  I_2&=\int_{|y-z|\ge t^{1/\alpha},\ {\rho(y)<t^{1/\alpha}}}\rho(y)^{-\alpha}\frac{t}{|y-z|^{d+\alpha}}dy\\
  &\ls \sum_{2^{-j}<t^{1/\alpha}}\sum_{2^{-k}\ge t^{1/\alpha}}\int_{D_j\cap B(z,2^{-k})}2^{j\alpha}\cdot t2^{k(d+\alpha)}dy\\
  &\ls  \sum_{2^{-j}<t^{1/\alpha}}\sum_{2^{-k}\ge t^{1/\alpha}}2^{j\alpha}\cdot t2^{k(d+\alpha)}\cdot 2^{-j}2^{-k(d-1)}\\
  &\ls t^{-1}\ .
\end{align*}
Hence, the claim is proved. Then we obtain $\|K_1\|_{2\to 2}\ls t^{-1}=\lambda$ by using Schur's test.

Therefore, \[\|K\|_{2\to2}\le \|K_1\|_{2\to2}+\|K_2\|_{2\to 2}\ls \lambda.\]
Then
\[\int_{\mathbb{R}^d\setminus\Omega}|(-\Delta)^{\alpha/2}e_\lambda|^2\ls \lambda^2\|e_\lambda\|_2^2.\]
So the proof is complete.
\end{proof}
Now we are ready to prove \eqref{comm} when $1/2<\alpha<1$.
\begin{theorem}\label{thm3} If $e_\lambda$ satisfies \eqref{eigen}, then for $\frac12<\alpha<1$ we have
   \[\|[\beta,(-\Delta)^{\alpha/2}]e_\lambda\|_2\ls \lambda^{1-\frac1\alpha}\|e_\lambda\|_2.\]
\end{theorem}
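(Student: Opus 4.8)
The plan is to estimate the commutator $[\beta,(-\Delta)^{\alpha/2}]e_\lambda$ directly from the singular-integral representation, splitting into a local (diagonal) part and a nonlocal (off-diagonal) part, and to feed the nonlocal part into the global bound for $(-\Delta)^{\alpha/2}e_\lambda$ from Lemma \ref{lemma5}. Writing out the commutator pointwise,
\[
[\beta,(-\Delta)^{\alpha/2}]e_\lambda(x)=c_{d,\alpha}\,\mathrm{p.v.}\int_{\mathbb{R}^d}\frac{(\beta(x)-\beta(y))\,e_\lambda(y)}{|x-y|^{d+\alpha}}\,dy,
\]
so the commutator has a genuine integral kernel $\mathcal K(x,y)=c_{d,\alpha}(\beta(x)-\beta(y))|x-y|^{-d-\alpha}$ with no principal value needed once $\beta$ is Lipschitz: near the diagonal $|\beta(x)-\beta(y)|\ls|x-y|$, so $|\mathcal K(x,y)|\ls|x-y|^{1-d-\alpha}$, which for $\alpha<1$ is locally integrable. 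The first step is therefore to bound the near-diagonal contribution $\int_{|x-y|<\lambda^{-1/\alpha}}$ by Schur's test: the kernel $|x-y|^{1-d-\alpha}$ integrates over a ball of radius $\lambda^{-1/\alpha}$ to roughly $\lambda^{-(1-\alpha)/\alpha}=\lambda^{(\alpha-1)/\alpha}$, giving exactly the target power $\lambda^{1-1/\alpha}$ for that piece. (One should check both Schur integrals, in $x$ and in $y$; the bound on $|\nabla\beta|$ makes them symmetric.)

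The second step handles the far part $|x-y|\ge\lambda^{-1/\alpha}$. Here I would not use Lipschitz smallness but instead split $\beta(x)-\beta(y)$ and bound the two resulting operators separately. The term with $\beta(x)$ is $\beta(x)\cdot c_{d,\alpha}\int_{|x-y|\ge\lambda^{-1/\alpha}}e_\lambda(y)|x-y|^{-d-\alpha}\,dy$; since $\beta$ is bounded this is controlled by a truncated fractional integral applied to $e_\lambda$, whose kernel $|x-y|^{-d-\alpha}\mathbf 1_{|x-y|\ge\lambda^{-1/\alpha}}$ has $L^1$ norm $\ls\lambda$, so Young's inequality gives an $L^2\to L^2$ bound $\ls\lambda$ — but this is $\lambda$, not $\lambda^{1-1/\alpha}$, so this crude splitting is too lossy on its own. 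The honest route is to recognize that, modulo the near-diagonal piece already estimated, the far operator is essentially $\beta\cdot(-\Delta)^{\alpha/2}e_\lambda$ minus the operator $y\mapsto\beta(y)$ composed with a truncated singular integral, and to exploit that $(-\Delta)^{\alpha/2}e_\lambda$ is supported-controlled: it equals $\lambda e_\lambda$ on $\Omega$ and is small off $\Omega$. More precisely I would use that for $x$ in the support of $\beta$ (which is a compact subset of $\Omega$), the factor $\beta(x)-\beta(y)$ vanishes for $y$ near $x$ and the relevant contributions come either from $y$ far from $\mathrm{supp}\,\beta$ or from $y\in\Omega\setminus\mathrm{supp}\,\beta$; in both regimes the kernel is bounded and one pairs against $\|e_\lambda\|_2$ or against the already-established $\|(-\Delta)^{\alpha/2}e_\lambda\|_{L^2(\mathbb R^d)}\approx\lambda\|e_\lambda\|_2$, then interpolates the extra decay coming from restricting $|x-y|\ge\lambda^{-1/\alpha}$.

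I expect the main obstacle to be precisely this far-field bookkeeping: extracting the gain from $\lambda$ down to $\lambda^{1-1/\alpha}$ requires combining the $|x-y|\ge\lambda^{-1/\alpha}$ truncation with the heat-kernel representation $e_\lambda=e^{1}P_{1/\lambda}^\Omega e_\lambda$ (as in Lemma \ref{lemma5}) rather than treating $e_\lambda$ as a generic $L^2$ function, so that the off-diagonal kernel can be integrated against the sharp Gaussian-type decay of $p_\Omega(\lambda^{-1},x,y)$ and the $\delta_\Omega$-weights near $\partial\Omega$ exactly as in the proof of Lemma \ref{lemma5}. Once both the diagonal Schur estimate and this heat-kernel-enhanced far estimate give $\lambda^{1-1/\alpha}\|e_\lambda\|_2$, summing the two pieces finishes the proof. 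A cross-check: at $\alpha\to1^+$ the exponent $1-1/\alpha\to0$ and the diagonal integral over a ball of radius $\lambda^{-1}$ of $|x-y|^{-d}$ becomes log-divergent, which matches the appearance of the $(\log\lambda)^{1/2}$ loss announced for $\alpha=1/2$ under the analogous but more delicate estimate there, and signals that the clean power bound should indeed degrade exactly at the endpoints $\alpha=1$ and $\alpha=1/2$.
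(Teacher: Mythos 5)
Your near-diagonal estimate is correct: on $|x-y|\le\lambda^{-1/\alpha}$ the kernel bound $|\beta(x)-\beta(y)|\,|x-y|^{-d-\alpha}\ls|x-y|^{1-d-\alpha}$ together with Schur's test gives exactly $\lambda^{1-\frac1\alpha}$. But the far part is where the entire difficulty lies, and your proposal does not close it. The operator with kernel $(\beta(x)-\beta(y))|x-y|^{-d-\alpha}\mathbf 1_{|x-y|\ge\lambda^{-1/\alpha}}$ has $L^2\to L^2$ norm of order $1$ (take $x\in K_1$ and $y$ at distance $\approx 1$ from $x$ but still well inside $\Omega$, where $\beta(x)-\beta(y)=1$ and no smallness is available), and since $1-\frac1\alpha<0$ for $\alpha<1$, no Schur-type or kernel-only argument can produce the required decaying power; one must use that $e_\lambda$ oscillates at frequency $\approx\lambda^{1/\alpha}$. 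Your proposed fix --- composing with the heat kernel $p_\Omega(\lambda^{-1},\cdot,\cdot)$ as in Lemma \ref{lemma5} --- does not supply this: the fractional heat kernel's $\delta_\Omega$-weights only give gains for points near $\partial\Omega$, while the problematic contributions to the far part come from interior $y$ at unit distance from $\operatorname{supp}\beta$, where $\int_\Omega p_\Omega(t,y,z)\,dz\approx 1$ and you again land at $O(1)\|e_\lambda\|_2$.

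The missing idea is the decomposition $e_\lambda=\lambda^{-1}(-\Delta)^{\alpha/2}e_\lambda+Te_\lambda$ with $Tu=u-\lambda^{-1}(-\Delta)^{\alpha/2}u$, which the paper uses precisely because $Te_\lambda$ vanishes identically on $\Omega$. The first piece is handled by the commutator bound $\|[\beta,(-\Delta)^{\alpha/2}]w\|_2\ls\|w\|_{H^{\alpha-1}}$ (Proposition \ref{prop1}) together with the interpolation $\|(-\Delta)^{(2\alpha-1)/2}e_\lambda\|_2\ls\|(-\Delta)^{\alpha/2}e_\lambda\|_2^{(2\alpha-1)/\alpha}\|e_\lambda\|_2^{(1-\alpha)/\alpha}$ and Lemma \ref{lemma5}; this is how the frequency information of $e_\lambda$ enters. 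The second piece, being supported in $\Omega^c$ while $\operatorname{supp}\beta\subset\subset\Omega$, reduces the commutator to $\beta(-\Delta)^{\alpha/2}Te_\lambda$ with a nonsingular kernel, and the proof then hinges on the weighted bound $\int_\Omega\rho(z)^{-\alpha}|e_\lambda(z)|\,dz\ls\lambda^{(2\alpha-1)/\alpha}\|e_\lambda\|_2$, whose Cauchy--Schwarz step is exactly where the hypothesis $\alpha>\frac12$ is consumed. Your proposal never identifies where $\alpha>\frac12$ is used, which is a warning sign; relatedly, your closing cross-check misattributes the $(\log\lambda)^{1/2}$ loss at $\alpha=\frac12$ to a diagonal divergence at $\alpha=1$, whereas it actually arises from the borderline integrability of $\rho(z)^{-2\alpha}$ when $2\alpha=1$ in that weighted estimate.
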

 \begin{proof}
   We define $Tu:=u-\lambda^{-1}(-\Delta)^{\alpha/2}u$, and write \[e_\lambda=\lambda^{-1}(-\Delta)^{\alpha/2}e_\lambda+Te_\lambda.\] One the one hand, for $\frac12\le \alpha<1$
\begin{align*}
  \|[\beta,(-\Delta)^{\alpha/2}](\lambda^{-1}(-\Delta)^{\alpha/2}e_\lambda)\|_2&\ls \lambda^{-1}\|(-\Delta)^{\alpha/2}e_\lambda\|_{H^{\alpha-1}}\\
  &\ls \lambda^{-1}\|(-\Delta)^{(\alpha-1)/2}(-\Delta)^{\alpha/2}e_\lambda\|_2\\
  &=\lambda^{-1}\|(-\Delta)^{(2\alpha-1)/2}e_\lambda\|_2\\
  &\ls\lambda^{-1}\|(-\Delta)^{\alpha/2}e_\lambda\|_2^{(2\alpha-1)/\alpha}\|e_\lambda\|_2^{(1-\alpha)/\alpha}
\end{align*}
So by Lemma \ref{lemma5}, we get
\[\|[\beta,(-\Delta)^{\alpha/2}](\lambda^{-1}(-\Delta)^{\alpha/2}e_\lambda)\|_2\ls \lambda^{(\alpha-1)/\alpha}\|e_\lambda\|_2.\]
On the other hand, observing that $Te_\lambda$ is supported on $\Omega^c$, and supp $\beta$ $\subset\subset \Omega$, we get
\begin{align*}
  |[\beta,(-\Delta)^{\alpha/2}]Te_\lambda(x)|&=|\beta(x)(-\Delta)^{\alpha/2}Te_\lambda(x)|\\
  &=c_{d,\alpha}|\beta(x)\int\frac{(Te_\lambda)(x)-(Te_\lambda)(y)}{|x-y|^{d+\alpha}}dy|\\
  &=c_{d,\alpha}|\beta(x)\int\frac{(Te_\lambda)(y)}{|x-y|^{d+\alpha}}dy|\\
  &\ls |\beta(x)|\int_{\Omega^c}|Te_\lambda(y)|dy\\
  &\le\lambda^{-1}|\beta(x)|\int_\Omega \int_{\Omega^c}\frac1{|z-y|^{d+\alpha}}dy|e_\lambda(z)| dz\\
  &\ls \lambda^{-1}|\beta(x)|\int_\Omega\rho(z)^{-\alpha}|e_\lambda(z)|dz
\end{align*}
Therefore, to prove
\[\|[\beta,(-\Delta)^{\alpha/2}]Te_\lambda\|_2\ls \lambda^{(\alpha-1)/\alpha}\|e_\lambda\|_2\]it suffices to show
\begin{equation}\label{rhoint}
  \int_\Omega\rho(z)^{-\alpha}|e_\lambda(z)|dz\ls \lambda^{(2\alpha-1)/\alpha}\|e_\lambda\|_2.
\end{equation}
Let $\delta=\frac2\alpha$. We decompose the integral into two parts: $\rho(z)<\lambda^{-\delta}$ and $\rho(z)\ge\lambda^{-\delta}$.
For the second part, we use Cauchy-Schwarz
\begin{align*}
  I_2&\le (\int_{\rho(z)\ge\lambda^{-\delta}}\rho(z)^{-2\alpha}dz)^{1/2}\|e_\lambda\|_2\\
  &\ls\lambda^{\frac{\delta}2(2\alpha-1)}\|e_\lambda\|_2\\
  &= \lambda^{(2\alpha-1)/\alpha}\|e_\lambda\|_2
\end{align*}
where we use $\alpha>1/2$ in the second inequality. For the first part, we use heat kernel
\begin{align*}
  I_1&\le\int_\Omega\int_{\rho(z)<\lambda^{-\delta}}\rho(z)^{-\alpha}p_\Omega(t,x,z)dz|e_\lambda(x)|dx\\
  &\ls \lambda^{\frac1\alpha-\delta(1-\alpha)}\|e_\lambda\|_1\\
  &\ls \lambda^{(2\alpha-1)/\alpha}\|e_\lambda\|_2\ .
\end{align*}
So we have obtained \eqref{rhoint}, and the proof  is complete.
 \end{proof}
\subsection{Some estimates when $0<\alpha\le\frac12$}
For the endpoint $\alpha=1/2$, we have \eqref{comm} with a log factor from estimating $I_2$, namely
\begin{equation}\label{comm2}
  \|[\beta,(-\Delta)^{\alpha/2}]e_\lambda\|_{2}\ls \lambda^{-1}(\log\lambda)^\frac12\|e_\lambda\|_{2}.
\end{equation}
For $0<\alpha<1/2$, we may follow the same argument to obtain
\begin{equation}\label{comm3}\|[\beta,(-\Delta)^{\alpha/2}]e_\lambda\|_{2}\ls \lambda^{-1}\|e_\lambda\|_{2}\end{equation}
which is weaker than the expected \eqref{comm}. It leads however, using the very same argument as in the previous section, to some bounds in this range. Therefore, we only state the final results for $0<\alpha\le \frac12$ in the following proposition.
\begin{proposition}\label{prop2}
  Let $d\ge1$ and $p_c=\frac{2d+2}{d-1}$. If $e_\lambda$ satisfies \eqref{eigen}, then when $\alpha=\frac12$, we have
\[\|e_\lambda\|_{L^p(K)}\ls \lambda^{(d-1)(\frac12-\frac1p)}(\log\lambda)^{\frac{d+1}4(\frac12-\frac1p)}\|e_\lambda\|_2,\ 2\le p\le p_c\]
\[\|e_\lambda\|_{L^p(K)}\ls\lambda^{2d(\frac12-\frac1p)-1}(\log\lambda)^\frac14\|e_\lambda\|_2,\ p_c\le p\le \infty.\]
Moreover, when $0<\alpha<\frac12$, we have
\[\|e_\lambda\|_{L^p(K)}\ls \lambda^{(\frac{d}{\alpha}-d-1)(\frac12-\frac1p)}\|e_\lambda\|_2,\ 2\le p\le p_c\]
\[\|e_\lambda\|_{L^p(K)}\ls\lambda^{\frac{d}{\alpha}(\frac12-\frac1p)-1}\|e_\lambda\|_2,\ p_c\le p\le \infty.\]
\end{proposition}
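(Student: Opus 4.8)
The plan is to run the reduction of Section~2 once more, but with the spectral parameter $z$ adapted to the \emph{actual} commutator bounds \eqref{comm2}, \eqref{comm3} rather than to the conjectural one \eqref{comm}. Since $\|[\beta,(-\Delta)^{\alpha/2}]e_\lambda\|_2$ is now only of size $\lambda^{-1}$ (for $0<\alpha<\tfrac12$), resp.\ $\lambda^{-1}(\log\lambda)^{1/2}$ (for $\alpha=\tfrac12$), the old choice $z=\lambda+i\lambda^{1-1/\alpha}$ has too small an imaginary part. I would instead take $z=\lambda+i\lambda^{-1}$ when $0<\alpha<\tfrac12$ and $z=\lambda+i\lambda^{-1}(\log\lambda)^{1/2}$ when $\alpha=\tfrac12$, i.e.\ $\mathrm{Im}\,z$ matching exactly the size of the commutator. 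This keeps $|z|\approx\lambda$, so \eqref{QR} is unchanged and $\|w\|_2\ls\lambda^{-1}\|e_\lambda\|_2$, resp.\ $\lambda^{-1}(\log\lambda)^{1/2}\|e_\lambda\|_2$; and $0<\arg z\to0$, so the identity $((-\Delta)^{\alpha/2}-z)^{-1}=T_0+R$ and the kernel bounds \eqref{R1}--\eqref{R} all still apply.

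Next I would re-derive \eqref{T0} and \eqref{QT0} for this $z$. Now $dist\big((\tfrac{z}{|z|})^{2/\alpha},[0,\infty)\big)\approx\lambda^{-2}$ (up to the log factor when $\alpha=\tfrac12$), so Lemma~\ref{lemma1} gives, for $2\le p\le p_c$,
\[\|QT_0\|_{2\to p}\ls\|T_0\|_{2\to p}\ls\lambda^{\,1+(\frac d\alpha-d-1)(\frac12-\frac1p)},\]
and, composing with $\|Q\|_{p_c\to p}$ via \eqref{Q}, for $p_c\le p\le\infty$,
\[\|QT_0\|_{2\to p}\ls\lambda^{\frac d\alpha(\frac12-\frac1p)}.\]
These are strictly smaller than the bounds in \eqref{QT0}, because the original $z$ was over-optimised for a commutator of size $\lambda^{1-1/\alpha}$. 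Feeding this into \eqref{uest} --- where the $QT_0$ term dominates both the $QR$ term (by \eqref{QR}, since $|z|\approx\lambda$) and, for $p>2$, the trailing $\|e_\lambda\|_2$ --- and multiplying by $\|w\|_2\ls\lambda^{-1}\|e_\lambda\|_2$ yields exactly $\|e_\lambda\|_{L^p(K)}\ls\lambda^{(\frac d\alpha-d-1)(\frac12-\frac1p)}\|e_\lambda\|_2$ on $[2,p_c]$ and $\ls\lambda^{\frac d\alpha(\frac12-\frac1p)-1}\|e_\lambda\|_2$ on $[p_c,\infty]$; at $p=2$ one just uses $\|e_\lambda\|_{L^2(K)}\le\|e_\lambda\|_2$. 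This settles the case $0<\alpha<\tfrac12$.

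For $\alpha=\tfrac12$ the same computation drags along the extra $(\log\lambda)^{1/2}$ coming from $\|w\|_2$, but --- and this is the delicate point --- the log inside $\mathrm{Im}\,z=\lambda^{-1}(\log\lambda)^{1/2}$ is inserted precisely so that the resolvent factor $dist(\cdot,[0,\infty))^{\frac{d+1}2(\frac12-\frac1p)-1}$ in Lemma~\ref{lemma1}, whose exponent is $\le0$ on $[2,p_c]$ (equal to $-\tfrac12$ at $p_c$), contributes a compensating $(\log\lambda)^{\frac{d+1}4(\frac12-\frac1p)-\frac12}$ to $\|QT_0\|_{2\to p}$ on $[2,p_c]$, and a $(\log\lambda)^{-1/4}$ on $[p_c,\infty]$ (inherited from the $p=p_c$ endpoint, since $\|Q\|_{p_c\to p}$ is log-free). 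Multiplying by the $(\log\lambda)^{1/2}$ from $\|w\|_2$ leaves exactly $(\log\lambda)^{\frac{d+1}4(\frac12-\frac1p)}$ on $[2,p_c]$ and $(\log\lambda)^{1/4}$ on $[p_c,\infty]$, i.e.\ $(\log\lambda)^{\gamma(d,p)}$ with $\gamma(d,p)=\min\{\tfrac14,\tfrac{d+1}4(\tfrac12-\tfrac1p)\}$. The main obstacle is therefore this log-bookkeeping: one must match $\mathrm{Im}\,z$ to the commutator size \emph{including} the factor $(\log\lambda)^{1/2}$, verify that the resolvent estimates of Lemma~\ref{lemma1} and the splitting $T_0+R$ remain valid for such a weakly elliptic $z$ (they do, as $\arg z\to0$ and $|z|\approx\lambda$), and check that the $QR$ contribution stays below the $QT_0$ contribution on the whole range of $p$.
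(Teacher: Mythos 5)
Your proposal is correct and is essentially the argument the paper intends (the paper only states Proposition \ref{prop2} as following from \eqref{comm2}--\eqref{comm3} "by the very same argument" and omits the details): you rerun the Section 2 reduction with $\mathrm{Im}\,z$ re-optimized to match the actual commutator bound, so that $dist((z/|z|)^{2/\alpha},[0,\infty))\approx\lambda^{-2}$ (times $(\log\lambda)^{1/2}$ when $\alpha=\tfrac12$), and your resulting exponents for $\|QT_0\|_{2\to p}$, the dominance over the $QR$ term, and the log bookkeeping at $\alpha=\tfrac12$ all check out against the stated bounds.
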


\section{Eigenfunction estimates when $0<\alpha<\frac12$}
In this section, we study the eigenfunction estimates when $0<\alpha<\frac12$. We may improve the estimates in Proposition \ref{prop2} when $0<\alpha<\frac12$. By interpolation, we only need to improve $L^{p_c}$ and $L^\infty$ bounds. We will decompose the range of $\alpha$ into a countable union of intervals: \[(0,\tfrac12)=\bigcup_{N\ge2} [\tfrac1{N+1},\tfrac1N)\] and establish a commutator estimate on each of them.
\begin{lemma}\label{lemma6}
  Let $N\ge2$. If $e_\lambda$ satisfies \eqref{eigen}, then for $\frac1{N+1}\le \alpha<\frac1N$ we have
   \[\|[\beta,(-\Delta)^{N\alpha/2}]e_\lambda\|_2\ls \lambda^{(N\alpha-1)/\alpha}\|e_\lambda\|_2.\]
\end{lemma}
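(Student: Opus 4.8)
The plan is to run the argument of Theorem~\ref{thm3} essentially verbatim, the whole point being that the hypothesis $\frac1{N+1}\le\alpha<\frac1N$ is exactly tuned so that the bookkeeping closes. Write $s=N\alpha$, so that $\frac{N}{N+1}\le s<1$; in particular $0<s<2$, so Proposition~\ref{prop1} applies with $s$ in place of $\alpha$ and gives $\|[\beta,(-\Delta)^{s/2}]u\|_2\ls\|u\|_{H^{s-1}}$. As in Theorem~\ref{thm3} I would decompose $e_\lambda=\lambda^{-1}(-\Delta)^{\alpha/2}e_\lambda+Te_\lambda$ with $Te_\lambda:=e_\lambda-\lambda^{-1}(-\Delta)^{\alpha/2}e_\lambda$, which by \eqref{eigen} vanishes on $\Omega$ and is supported on $\Omega^c$, and split
\[
[\beta,(-\Delta)^{N\alpha/2}]e_\lambda=\lambda^{-1}[\beta,(-\Delta)^{N\alpha/2}]\big((-\Delta)^{\alpha/2}e_\lambda\big)+[\beta,(-\Delta)^{N\alpha/2}]Te_\lambda,
\]
then estimate the two terms separately.

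For the first term, Proposition~\ref{prop1} bounds it by $\lambda^{-1}\|(-\Delta)^{\alpha/2}e_\lambda\|_{H^{N\alpha-1}}$. Since $N\alpha-1<0$ one has $(1+|\xi|^2)^{N\alpha-1}|\xi|^{2\alpha}\ls|\xi|^{2\alpha}\mathbf 1_{|\xi|\le1}+|\xi|^{2((N+1)\alpha-1)}\mathbf 1_{|\xi|\ge1}$, so that, using the lower bound $\alpha\ge\frac1{N+1}$ (which makes $(N+1)\alpha-1\ge0$),
\[
\|(-\Delta)^{\alpha/2}e_\lambda\|_{H^{N\alpha-1}}\ls\|e_\lambda\|_2+\|(-\Delta)^{((N+1)\alpha-1)/2}e_\lambda\|_2 .
\]
Now $(N+1)\alpha-1\le\alpha$ (equivalently $N\alpha\le1$, the upper bound on $\alpha$), so Hölder on the Fourier side interpolates between $e_\lambda\in L^2$ and $(-\Delta)^{\alpha/2}e_\lambda$; invoking Lemma~\ref{lemma5} (valid since $0<\alpha<1$) then gives $\|(-\Delta)^{((N+1)\alpha-1)/2}e_\lambda\|_2\ls\lambda^{\,N+1-1/\alpha}\|e_\lambda\|_2$. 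Since $\lambda>1$ and $N+1-1/\alpha\ge0$, the first term is thus $\ls\lambda^{-1}\lambda^{\,N+1-1/\alpha}\|e_\lambda\|_2=\lambda^{(N\alpha-1)/\alpha}\|e_\lambda\|_2$, which is the claimed bound.

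For the second term, since $\operatorname{supp}\beta\subset\subset\Omega$ and $Te_\lambda$ is supported on $\Omega^c$ we have $\beta\,Te_\lambda\equiv0$, hence $[\beta,(-\Delta)^{N\alpha/2}]Te_\lambda=\beta(-\Delta)^{N\alpha/2}Te_\lambda$; moreover $Te_\lambda(x)=0$ for $x\in\operatorname{supp}\beta\subset\Omega$, so the principal value integral reduces there to $(-\Delta)^{N\alpha/2}Te_\lambda(x)=-c_{d,N\alpha}\int_{\Omega^c}|x-y|^{-d-N\alpha}Te_\lambda(y)\,dy$, and since $|x-y|\gs1$ for $x\in\operatorname{supp}\beta$, $y\in\Omega^c$, this is $\ls\int_{\Omega^c}|Te_\lambda(y)|\,dy=\lambda^{-1}\int_{\Omega^c}|(-\Delta)^{\alpha/2}e_\lambda(y)|\,dy$. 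Expanding $(-\Delta)^{\alpha/2}e_\lambda(y)=-c_{d,\alpha}\int_\Omega|y-z|^{-d-\alpha}e_\lambda(z)\,dz$ on $\Omega^c$, Fubini together with $\int_{\Omega^c}|y-z|^{-d-\alpha}\,dy\ls\rho(z)^{-\alpha}$ yields $\ls\lambda^{-1}\int_\Omega\rho(z)^{-\alpha}|e_\lambda(z)|\,dz$. Here $N\ge2$ is used: it forces $\alpha<\frac12$, hence $\int_\Omega\rho(z)^{-2\alpha}\,dz<\infty$ (a codimension-one singularity of order $<1$), and Cauchy--Schwarz gives $\int_\Omega\rho(z)^{-\alpha}|e_\lambda(z)|\,dz\ls\|e_\lambda\|_2$. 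Therefore $\|[\beta,(-\Delta)^{N\alpha/2}]Te_\lambda\|_2\ls\lambda^{-1}\|\beta\|_2\|e_\lambda\|_2\ls\lambda^{-1}\|e_\lambda\|_2$, and since $\frac{N\alpha-1}{\alpha}=N-\frac1\alpha\ge-1$ (again by $\alpha\ge\frac1{N+1}$) and $\lambda>1$, this is $\ls\lambda^{(N\alpha-1)/\alpha}\|e_\lambda\|_2$. Adding the two contributions finishes the proof. I do not expect a genuine obstacle: the only step where the precise interval $[\frac1{N+1},\frac1N)$ is used for more than a trivial inequality is the interpolation/Lemma~\ref{lemma5} step, which requires the interpolation exponent $\frac{(N+1)\alpha-1}{\alpha}$ to lie in $[0,1]$; everything else is a support argument together with the integrability of $\rho^{-2\alpha}$.
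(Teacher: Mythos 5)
Your proposal is correct and follows essentially the same route as the paper: the same decomposition $e_\lambda=\lambda^{-1}(-\Delta)^{\alpha/2}e_\lambda+Te_\lambda$, Proposition~\ref{prop1} plus Fourier-side interpolation and Lemma~\ref{lemma5} for the first piece, and the support/Cauchy--Schwarz argument with $\int_\Omega\rho^{-2\alpha}<\infty$ for the second. The only cosmetic difference is your low/high frequency split of the $H^{N\alpha-1}$ norm, where the paper simply uses $(1+|\xi|^2)^{(N\alpha-1)/2}\le|\xi|^{N\alpha-1}$ directly.
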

\begin{proof} As in the proof of Theorem \ref{thm3},  we write \[e_\lambda=\lambda^{-1}(-\Delta)^{\alpha/2}e_\lambda+Te_\lambda.\] Let $N\ge2$. Then for $\frac1{N+1}\le \alpha<\frac1N$,
\begin{align*}
  \|[\beta,(-\Delta)^{N\alpha/2}](\lambda^{-1}(-\Delta)^{\alpha/2}e_\lambda)\|_2&\ls \lambda^{-1}\|(-\Delta)^{\alpha/2}e_\lambda\|_{H^{N\alpha-1}}\\
  &\ls \lambda^{-1}\|(-\Delta)^{(N\alpha-1)/2}(-\Delta)^{\alpha/2}e_\lambda\|_2\\
  &=\lambda^{-1}\|(-\Delta)^{((N+1)\alpha-1)/2}e_\lambda\|_2\\
  &\ls\lambda^{-1}\|(-\Delta)^{\alpha/2}e_\lambda\|_2^{((N+1)\alpha-1)/\alpha}\|e_\lambda\|_2^{(1-N\alpha)/\alpha}\\
  &\ls \lambda^{(N\alpha-1)/\alpha}\|e_\lambda\|_2
\end{align*}
where we use Lemma \ref{lemma5} in the last step.
On the other hand, observing that $Te_\lambda$ is supported on $\Omega^c$, and supp $\beta$ $\subset\subset \Omega$, we get
\begin{align*}
  |[\beta,(-\Delta)^{N\alpha/2}]Te_\lambda(x)|&=|\beta(x)(-\Delta)^{N\alpha/2}Te_\lambda(x)|\\
  &=c_{d,N\alpha}|\beta(x)\int\frac{(Te_\lambda)(x)-(Te_\lambda)(y)}{|x-y|^{d+N\alpha}}dy|\\
  &=c_{d,N\alpha}|\beta(x)\int\frac{(Te_\lambda)(y)}{|x-y|^{d+N\alpha}}dy|\\
  &\ls |\beta(x)|\int_{\Omega^c}|Te_\lambda(y)|dy\\
  &\le\lambda^{-1}|\beta(x)|\int_\Omega \int_{\Omega^c}\frac1{|z-y|^{d+\alpha}}dy|e_\lambda(z)| dz\\
  &\ls \lambda^{-1}|\beta(x)|\int_\Omega\rho(z)^{-\alpha}|e_\lambda(z)|dz\\
  &\ls \lambda^{-1}|\beta(x)|(\int_\Omega\rho(z)^{-2\alpha}dz)^\frac12\|e_\lambda\|_2\\
  &\ls \lambda^{-1}|\beta(x)|\|e_\lambda\|_2\ \ \ \ (using\ \alpha<\tfrac12).
\end{align*}
Thus
\[\|[\beta,(-\Delta)^{N\alpha/2}]Te_\lambda\|_2\ls \lambda^{-1}\|e_\lambda\|_2\le \lambda^{(N\alpha-1)/\alpha}\|e_\lambda\|_2\]
when $\alpha\ge \frac1{N+1}$. So the proof is complete.
\end{proof}
From the proof, we immediately have the following corollary, where $Tu=u-\lambda^{-1}(-\Delta)^{\alpha/2}u$.
\begin{corollary}\label{cor1} Let $N\ge2$ and $0<\alpha<\frac1N$. If $e_\lambda$ satisfies \eqref{eigen}, then for $1\le j\le N$
  \[\|\beta(-\Delta)^{j\alpha/2}Te_\lambda\|_2\ls \lambda^{-1}\|e_\lambda\|_2.\]
\end{corollary}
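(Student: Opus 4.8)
The plan is to repeat verbatim the pointwise estimate established in the proof of Lemma~\ref{lemma6}, observing that nothing there was special to the value $j=N$. First I would record the structural fact underlying the argument: since $e_\lambda$ vanishes on $\Omega^c$, the function $Te_\lambda=e_\lambda-\lambda^{-1}(-\Delta)^{\alpha/2}e_\lambda$ coincides with $-\lambda^{-1}(-\Delta)^{\alpha/2}e_\lambda$ on $\Omega^c$ and is supported there, while $\operatorname{supp}\beta$ is a compact subset of $\Omega$, so that $c_0:=\operatorname{dist}(\operatorname{supp}\beta,\Omega^c)>0$. Hence for $x\in\operatorname{supp}\beta$ the principal-value integral defining $(-\Delta)^{j\alpha/2}Te_\lambda(x)$ has no singularity: $(Te_\lambda)(x)=0$, and
\[
\beta(x)(-\Delta)^{j\alpha/2}Te_\lambda(x)=-c_{d,j\alpha}\,\beta(x)\int_{\Omega^c}\frac{(Te_\lambda)(y)}{|x-y|^{d+j\alpha}}\,dy.
\]
Note that $0<j\alpha<1<2$ for $1\le j\le N$ since $\alpha<1/N$, so $(-\Delta)^{j\alpha/2}$ is a genuine fractional Laplacian of order in $(0,2)$ with finite normalizing constant $c_{d,j\alpha}$, and the exponents $j\alpha$ range over a fixed bounded subinterval of $(0,1)$; consequently all constants below can be taken uniform in $j$.

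Next I would carry out the elementary bound. For $x\in\operatorname{supp}\beta$ and $y\in\Omega^c$ one has $|x-y|\ge c_0$, and the tail of $\int_{\Omega^c}|x-y|^{-d-j\alpha}\,dy$ converges because $d+j\alpha>d$; therefore $\int_{\Omega^c}|x-y|^{-d-j\alpha}\,dy$ is bounded uniformly in $x\in\operatorname{supp}\beta$ and in $1\le j\le N$, which gives
\[
|\beta(x)(-\Delta)^{j\alpha/2}Te_\lambda(x)|\ls |\beta(x)|\int_{\Omega^c}|Te_\lambda(y)|\,dy.
\]

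Finally I would estimate $\int_{\Omega^c}|Te_\lambda|$ exactly as in the proof of Lemma~\ref{lemma6}: for $y\in\Omega^c$ we have $|Te_\lambda(y)|=\lambda^{-1}|(-\Delta)^{\alpha/2}e_\lambda(y)|\le c_{d,\alpha}\lambda^{-1}\int_\Omega|y-z|^{-d-\alpha}|e_\lambda(z)|\,dz$, so Fubini together with the bound $\int_{\Omega^c}|y-z|^{-d-\alpha}\,dy\ls\rho(z)^{-\alpha}$ (valid since $B(z,\rho(z))\subset\Omega$, with $\rho(z)=\operatorname{dist}(z,\Omega^c)$) yields
\[
\int_{\Omega^c}|Te_\lambda(y)|\,dy\ls\lambda^{-1}\int_\Omega\rho(z)^{-\alpha}|e_\lambda(z)|\,dz\ls\lambda^{-1}\Big(\int_\Omega\rho(z)^{-2\alpha}\,dz\Big)^{1/2}\|e_\lambda\|_2\ls\lambda^{-1}\|e_\lambda\|_2,
\]
where the last step uses $\alpha<1/2$, so that $\rho^{-2\alpha}$ is integrable over the bounded $C^{1,1}$ domain $\Omega$. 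Combining this with the preceding display and integrating $|\beta(x)|^2$ over $\operatorname{supp}\beta$ (on which $\|\beta\|_2<\infty$ since $\beta\in C_0^\infty$) gives $\|\beta(-\Delta)^{j\alpha/2}Te_\lambda\|_2\ls\lambda^{-1}\|e_\lambda\|_2$ for every $1\le j\le N$, as claimed. I do not expect a genuine obstacle here; the only point meriting a word of care is the uniformity in $j$, which is immediate because $\{j\alpha:1\le j\le N\}$ is a fixed finite subset of $(0,1)$ and the separation $c_0$ of the supports of $\beta$ and of $Te_\lambda$ is a fixed positive number.
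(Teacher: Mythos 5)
Your proposal is correct and is essentially the paper's own argument: the corollary is obtained there by simply rereading the second half of the proof of Lemma~\ref{lemma6} with $N\alpha$ replaced by $j\alpha$, using the support separation of $\beta$ and $Te_\lambda$, the bound $\int_{\Omega^c}|y-z|^{-d-\alpha}\,dy\ls\rho(z)^{-\alpha}$, Cauchy--Schwarz, and $\alpha<\tfrac12$. Your additional remark on uniformity in $j$ is a harmless clarification, not a new idea.
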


\begin{lemma}\label{lemma7}
 Let $N\ge2$ be an integer. If $e_\lambda$ satisfies \eqref{eigen}, and $v=\beta e_\lambda$, then for $0<\alpha<\frac1N$
 \[(-\Delta)^{N\alpha/2}v=\lambda^N v-\sum_{j=1}^{N-1}\lambda^{N-j}\beta(-\Delta)^{j\alpha/2}Te_\lambda-[\beta,(-\Delta)^{N\alpha/2}]e_\lambda.\]
\end{lemma}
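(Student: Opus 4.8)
The plan is to derive the identity by iterating the basic eigenvalue relation, exactly as the case $N=2$ would suggest, and then collecting the commutator error terms at each stage. The starting point is the already-established relation (from the beginning of Section 2, applied with our cut-off $\beta$)
\[(-\Delta)^{\alpha/2}v=\lambda v-[\beta,(-\Delta)^{\alpha/2}]e_\lambda,\]
which we rewrite, using $Tu=u-\lambda^{-1}(-\Delta)^{\alpha/2}u$ and hence $(-\Delta)^{\alpha/2}e_\lambda=\lambda(e_\lambda-Te_\lambda)$, in the form
\[(-\Delta)^{\alpha/2}v=\lambda v-\beta(-\Delta)^{\alpha/2}(Te_\lambda)+\lambda\beta(e_\lambda-v)\cdot 0\ \ \text{(reorganized below)}.\]
More cleanly: since $\beta(-\Delta)^{\alpha/2}e_\lambda=(-\Delta)^{\alpha/2}v+[\beta,(-\Delta)^{\alpha/2}]e_\lambda$ and $\beta(-\Delta)^{\alpha/2}e_\lambda=\lambda\beta e_\lambda-\lambda\beta(Te_\lambda)=\lambda v-\lambda\beta(Te_\lambda)$, we get the $N=1$ version
\[(-\Delta)^{\alpha/2}v=\lambda v-\lambda\beta(Te_\lambda)-[\beta,(-\Delta)^{\alpha/2}]e_\lambda,\]
but for the telescoping it is more convenient to keep the form $(-\Delta)^{\alpha/2}v=\lambda v-[\beta,(-\Delta)^{\alpha/2}]e_\lambda$ and handle $(-\Delta)^{j\alpha/2}$ applied to the commutator separately.

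The key step is an induction on $N$. Suppose for some $k$ with $1\le k\le N-1$ we have
\[(-\Delta)^{k\alpha/2}v=\lambda^k v-\sum_{j=1}^{k-1}\lambda^{k-j}\beta(-\Delta)^{j\alpha/2}Te_\lambda-[\beta,(-\Delta)^{k\alpha/2}]e_\lambda.\]
Apply $(-\Delta)^{\alpha/2}$ to both sides. On the left we get $(-\Delta)^{(k+1)\alpha/2}v$. On the right, the term $\lambda^k(-\Delta)^{\alpha/2}v$ is rewritten via the $N=1$ identity as $\lambda^{k+1}v-\lambda^{k+1}\beta(Te_\lambda)-\lambda^k[\beta,(-\Delta)^{\alpha/2}]e_\lambda$; the middle sum becomes $\sum_{j=1}^{k-1}\lambda^{k-j}(-\Delta)^{\alpha/2}\beta(-\Delta)^{j\alpha/2}Te_\lambda$, which we re-express using $(-\Delta)^{\alpha/2}\beta=\beta(-\Delta)^{\alpha/2}+[(-\Delta)^{\alpha/2},\beta]$ so that the main part is $\sum_{j=1}^{k-1}\lambda^{k-j}\beta(-\Delta)^{(j+1)\alpha/2}Te_\lambda=\sum_{j=2}^{k}\lambda^{k+1-j}\beta(-\Delta)^{j\alpha/2}Te_\lambda$; and the last term is $-(-\Delta)^{\alpha/2}[\beta,(-\Delta)^{k\alpha/2}]e_\lambda$. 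Finally one must recognize that the $j=1$ term $\lambda^{k+1}\beta(Te_\lambda)=\lambda^{(k+1)-1}\beta(-\Delta)^{\alpha\cdot 1/2}Te_\lambda$ — wait, this is not literally $\beta(-\Delta)^{\alpha/2}Te_\lambda$; rather $\lambda\beta(Te_\lambda)$ and $\beta(-\Delta)^{\alpha/2}Te_\lambda$ are related by $T$ again. The cleaner route, which I would actually adopt, is to avoid reintroducing $T$ at each step and instead prove the purely algebraic operator identity: for any integer $N\ge1$,
\[(-\Delta)^{N\alpha/2}v=\sum_{k=0}^{N-1}\lambda^{N-1-k}(-\Delta)^{k\alpha/2}\bigl(\lambda v-[\beta,(-\Delta)^{\alpha/2}]e_\lambda-(-\Delta)^{\alpha/2}v+\lambda v\bigr)\ \ \text{...}\]
—no; simplest of all is: from $(-\Delta)^{\alpha/2}v=\lambda v-[\beta,(-\Delta)^{\alpha/2}]e_\lambda$, iterate directly to get $(-\Delta)^{N\alpha/2}v=\lambda^N v-\sum_{k=0}^{N-1}\lambda^{N-1-k}(-\Delta)^{k\alpha/2}[\beta,(-\Delta)^{\alpha/2}]e_\lambda$, and then identify, using $(-\Delta)^{k\alpha/2}[\beta,(-\Delta)^{\alpha/2}]e_\lambda=(-\Delta)^{k\alpha/2}\beta(-\Delta)^{\alpha/2}e_\lambda-(-\Delta)^{(k+1)\alpha/2}v$ together with $(-\Delta)^{\alpha/2}e_\lambda=\lambda e_\lambda-\lambda Te_\lambda$ on $\mathbb R^d$ and a telescoping of the commutators $[\beta,(-\Delta)^{k\alpha/2}]$, that the sum collapses to the claimed form $\lambda^N v-\sum_{j=1}^{N-1}\lambda^{N-j}\beta(-\Delta)^{j\alpha/2}Te_\lambda-[\beta,(-\Delta)^{N\alpha/2}]e_\lambda$. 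Note every fractional power appearing is of order $\le N\alpha<1$, and $v,e_\lambda,Te_\lambda$ are in the relevant Sobolev/$L^2$ classes (the first two compactly supported, the last one supported in $\Omega^c$ with enough decay), so all manipulations are legitimate; the condition $N\alpha<1$ is what guarantees $Te_\lambda$ and each $(-\Delta)^{j\alpha/2}Te_\lambda$ make sense pointwise on $\Omega^c$ via the singular integral, as used in Corollary~\ref{cor1}.

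The main obstacle is purely bookkeeping: correctly tracking how the commutators $[\beta,(-\Delta)^{j\alpha/2}]$ for different $j$ combine so that only the single term $[\beta,(-\Delta)^{N\alpha/2}]e_\lambda$ survives, rather than a sum of mixed commutators and products. The mechanism is that $(-\Delta)^{\alpha/2}\bigl(\beta\,(-\Delta)^{(N-1)\alpha/2}e_\lambda\bigr)=\beta(-\Delta)^{N\alpha/2}e_\lambda+[(-\Delta)^{\alpha/2},\beta](-\Delta)^{(N-1)\alpha/2}e_\lambda$, and the freshly produced commutator $[(-\Delta)^{\alpha/2},\beta]$ acting on $(-\Delta)^{(N-1)\alpha/2}e_\lambda$ is exactly what is needed to upgrade $[\beta,(-\Delta)^{(N-1)\alpha/2}]$ to $[\beta,(-\Delta)^{N\alpha/2}]$; writing this out carefully (best done by first establishing the clean "no-$T$" identity $(-\Delta)^{N\alpha/2}v=\lambda^N v-\sum_{k=0}^{N-1}\lambda^{N-1-k}(-\Delta)^{k\alpha/2}[\beta,(-\Delta)^{\alpha/2}]e_\lambda$ by a one-line induction, then rewriting its right-hand side) makes the collapse transparent. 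No analytic estimate is needed for this lemma; it is the subsequent use of Corollary~\ref{cor1} and Lemma~\ref{lemma6} that will exploit it.
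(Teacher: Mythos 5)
Your final route is correct, but it is genuinely different from (and more laborious than) the paper's. The paper introduces the commutator exactly once, at full order: it writes $(-\Delta)^{N\alpha/2}v=\beta(-\Delta)^{N\alpha/2}e_\lambda-[\beta,(-\Delta)^{N\alpha/2}]e_\lambda$ by the definition of the commutator and never touches that term again; the remaining piece $\beta(-\Delta)^{N\alpha/2}e_\lambda$ is unwound by substituting the pointwise identity $(-\Delta)^{\alpha/2}e_\lambda=\lambda e_\lambda-\lambda Te_\lambda$ (valid on all of $\mathbb R^d$ by the definition of $T$) a total of $N-1$ times, and the last factor $\beta(-\Delta)^{\alpha/2}e_\lambda=\lambda\beta e_\lambda$ uses only the eigenvalue equation in $\Omega$ and $\operatorname{supp}\beta\subset\Omega$. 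No recombination of commutators of different orders is ever needed. You instead iterate the $v$-equation to get $(-\Delta)^{N\alpha/2}v=\lambda^Nv-\sum_{k=0}^{N-1}\lambda^{N-1-k}(-\Delta)^{k\alpha/2}[\beta,(-\Delta)^{\alpha/2}]e_\lambda$ and then must show this sum collapses to $\sum_{j=1}^{N-1}\lambda^{N-j}\beta(-\Delta)^{j\alpha/2}Te_\lambda+[\beta,(-\Delta)^{N\alpha/2}]e_\lambda$ --- a step you assert via ``telescoping'' but do not carry out. It does check out: with $B_k:=[\beta,(-\Delta)^{k\alpha/2}]e_\lambda$, the Leibniz rule $[\beta,AB]=[\beta,A]B+A[\beta,B]$ combined with $(-\Delta)^{\alpha/2}e_\lambda=\lambda e_\lambda-\lambda Te_\lambda$ and $\beta\,Te_\lambda\equiv0$ (since $Te_\lambda$ vanishes on $\Omega$ while $\operatorname{supp}\beta\subset\Omega$) gives $(-\Delta)^{k\alpha/2}[\beta,(-\Delta)^{\alpha/2}]e_\lambda=B_{k+1}-\lambda B_k+\lambda\beta(-\Delta)^{k\alpha/2}Te_\lambda$, and summing against $\lambda^{N-1-k}$ telescopes (using $B_0=0$ and $\lambda^N\beta Te_\lambda=0$) to exactly $B_N+\sum_{j=1}^{N-1}\lambda^{N-j}\beta(-\Delta)^{j\alpha/2}Te_\lambda$. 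So your argument is complete once this computation is written down; you should excise the two abandoned attempts, make the support fact $\beta\,Te_\lambda=0$ explicit (it is used twice), or better, adopt the paper's order of operations, which reduces the whole lemma to a three-line substitution.
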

\begin{proof} By the definition of $Te_\lambda$, we have $(-\Delta)^{\alpha/2}e_\lambda=\lambda e_\lambda-\lambda Te_\lambda$. We may use this identity repeatedly to prove the lemma. Indeed, \begin{align*}
(-\Delta)^{N\alpha/2}(\beta e_\lambda)&=\beta(-\Delta)^{N\alpha/2}e_\lambda-[\beta,(-\Delta)^{N\alpha/2}]e_\lambda\\
&=\beta(-\Delta)^{(N-1)\alpha/2}(\lambda e_\lambda-\lambda Te_\lambda)-[\beta,(-\Delta)^{N\alpha/2}]e_\lambda\\
&=\lambda\beta(-\Delta)^{(N-1)\alpha/2}e_\lambda-\lambda\beta(-\Delta)^{(N-1)\alpha/2} Te_\lambda-[\beta,(-\Delta)^{N\alpha/2}]e_\lambda\\
&=...\\
&=\lambda^{N-1}\beta(-\Delta)^{\alpha/2}e_\lambda-\sum_{j=1}^{N-1}\lambda^{N-j}\beta(-\Delta)^{j\alpha/2}Te_\lambda-[\beta,(-\Delta)^{N\alpha/2}]e_\lambda\\
&=\lambda^N\beta e_\lambda-\sum_{j=1}^{N-1}\lambda^{N-j}\beta(-\Delta)^{j\alpha/2}Te_\lambda-[\beta,(-\Delta)^{N\alpha/2}]e_\lambda
\end{align*}
where we use $(-\Delta)^{\alpha/2}e_\lambda=\lambda e_\lambda$ in $\Omega$ and supp $\beta$ $\subset \Omega$.
\end{proof}
\subsection{$L^{p_c}$ bounds}
\begin{theorem}\label{thm4}Let $d\ge1$, $0<\alpha<\frac12$, and $p_c=\frac{2d+2}{d-1}$. If $e_\lambda$ satisfies \eqref{eigen}, then for $\lambda>1$
\[\|e_\lambda\|_{L^{p_c}(K)}\ls \lambda^{\frac{d(3d+1)}{2(d+1)(2d+1)}\frac1\alpha-\frac{d+1}{2d+1}}\|e_\lambda\|_2.\]
\end{theorem}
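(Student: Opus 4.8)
The plan is to run the argument of Section~2 with the operator $(-\Delta)^{N\alpha/2}$, whose order $N\alpha\in[\tfrac{N}{N+1},1)$ is close to one, in place of $(-\Delta)^{\alpha/2}$: Lemma~\ref{lemma6} replaces the commutator estimate \eqref{comm}, and Lemma~\ref{lemma7} supplies the inhomogeneous equation satisfied by $v=\beta e_\lambda$.

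First I fix $N\ge2$ with $\tfrac1{N+1}\le\alpha<\tfrac1N$ and invoke Lemma~\ref{lemma7}: $(-\Delta)^{N\alpha/2}v=\lambda^N v-g-h$, with $g=\sum_{j=1}^{N-1}\lambda^{N-j}\beta(-\Delta)^{j\alpha/2}Te_\lambda$ and $h=[\beta,(-\Delta)^{N\alpha/2}]e_\lambda$. I fix $z=\lambda^N+i\lambda^{N-1/\alpha}$, so that $\mathrm{Im}\,z$ matches the size of $h$ furnished by Lemma~\ref{lemma6}; then $v=((-\Delta)^{N\alpha/2}-z)^{-1}w_N$ with $w_N=(\lambda^N-z)v-g-h$, and I split $((-\Delta)^{N\alpha/2}-z)^{-1}=T_0+R$ using the identity of Section~2 with exponent $N\alpha$ (so $z^{2/(N\alpha)}$ has modulus $\lambda^{2/\alpha}$ and $dist(z^{2/(N\alpha)}/|z^{2/(N\alpha)}|,[0,\infty))\approx\lambda^{-1/\alpha}$). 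The analogues of \eqref{T0} and \eqref{R} hold with $\alpha$ replaced by $N\alpha$ throughout and $|z|\approx\lambda^N$; in particular $\|T_0\|_{2\to p_c}\ls\lambda^{\frac{3d+1}{2(d+1)}\frac1\alpha-N}$ and the $R$-term is of strictly lower order. The heat-kernel reduction of Section~2 (same $Q$, same $t=\lambda^{-1}$) gives $\|e_\lambda\|_{L^{p_c}(K)}\ls\|QT_0w_N\|_{p_c}+\|QRw_N\|_{p_c}+\|e_\lambda\|_2$.

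Next I treat the three pieces of $w_N$ separately. For $(\lambda^N-z)v$ and $h$ I only need their $L^2$ norms: $\|(\lambda^N-z)v\|_2\le\lambda^{N-1/\alpha}\|e_\lambda\|_2$ and, by Lemma~\ref{lemma6}, $\|h\|_2\ls\lambda^{N-1/\alpha}\|e_\lambda\|_2$; together with the bound on $\|QT_0\|_{2\to p_c}$ (and the lower-order $R$-term) these contribute only $\ls\lambda^{\frac{d-1}{2\alpha(d+1)}}\|e_\lambda\|_2$, the conjectural exponent. The substantive part is $g$. By Corollary~\ref{cor1}, and the fact that $\mathrm{supp}\,\beta$ lies at positive distance from $\Omega^c\supseteq\mathrm{supp}\,Te_\lambda$ (so each $(-\Delta)^{j\alpha/2}Te_\lambda$ restricts to $\mathrm{supp}\,\beta$ as a bounded function of size $\ls\lambda^{-1}\|e_\lambda\|_2$), one obtains $\|g\|_q\ls\lambda^{N-2}\|e_\lambda\|_2$ for \emph{every} $q\in[2,\infty]$. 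Since $\alpha<\tfrac12$ forces $N-2>N-\tfrac1\alpha$, this $L^2$ bound is weaker than the commutator-sized one, and putting $g$ into $w_N$ as it stands would give only $\lambda^{\frac{3d+1}{2(d+1)}\frac1\alpha-2}\|e_\lambda\|_2$. Instead I exploit the $q$-uniformity of the bound on $\|g\|_q$: I estimate $\|Q(T_0+R)g\|_{p_c}\le(\|QT_0\|_{q\to p_c}+\|QR\|_{q\to p_c})\|g\|_q$ and optimise over $q$, using the sharp $L^q\to L^{p_c}$ resolvent bounds underlying Lemma~\ref{lemma1} (whose two cases cover only the endpoints $q=2$ and $q=p_c$) together with \eqref{Q}. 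The $R$-contribution stays lower order; the optimised $T_0$-contribution works out to exactly $\lambda^{\frac{d(3d+1)}{2(d+1)(2d+1)}\frac1\alpha-\frac{d+1}{2d+1}}\|e_\lambda\|_2$, and since this exponent dominates the one from the previous step it governs the estimate.

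The main obstacle is this last optimisation. The corrections $g$ produced by iterating the eigenvalue equation through Lemma~\ref{lemma7} do not satisfy the commutator-sized $L^2$ bound, and one recovers a usable estimate only by trading the $q$-uniform $L^q$ smallness of $g$ against the degenerate factor $dist(z/|z|,[0,\infty))\approx\lambda^{-1/\alpha}$ in the resolvent estimates and the loss $\lambda^{\frac d\alpha(\frac1q-\frac1r)}$ in $Q$. Keeping every exponent sharp through this balance, and checking that the required $L^q\to L^{p_c}$ resolvent bound is indeed available at the optimal $q$ (a point not covered by the two cases of Lemma~\ref{lemma1}), is the delicate part; it is also precisely why the resulting exponent lands strictly between the bound of Proposition~\ref{prop2} and the conjectural one.
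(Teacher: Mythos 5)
Your overall skeleton (Lemma \ref{lemma7} for the inhomogeneous equation, the splitting $((-\Delta)^{N\alpha/2}-z)^{-1}=T_0+R$, the heat-kernel operator $Q$) matches the paper, but you have frozen the wrong parameter, and the optimization you propose in its place does not close. The paper takes $z=\lambda^N+i\lambda^{N-\delta}$ with $\delta>0$ \emph{free}, measures the troublesome term $g=\sum_{j}\lambda^{N-j}\beta(-\Delta)^{j\alpha/2}Te_\lambda$ simply in $L^{p_c}$, and obtains the three exponents $\frac{d}{(d+1)\alpha}-\frac{\delta}{2}$, $\frac{3d+1}{2d+2}\delta-2$ and $\frac{\delta}{2}-\frac{1}{(d+1)\alpha}$; equating the first two gives $\delta=\frac{d+2\alpha(d+1)}{\alpha(2d+1)}$, which is strictly \emph{smaller} than $\frac1\alpha$ for $\alpha<\frac12$, and yields exactly the stated exponent. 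In other words, the whole point is to take $\operatorname{Im}z$ deliberately larger than the size $\lambda^{N-1/\alpha}$ of the commutator term, worsening the $(\lambda^N-z)v$ contribution in order to tame $g$.

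Your choice $\delta=\frac1\alpha$ forecloses this, and the $q$-optimization cannot recover it. With $\delta=\frac1\alpha$ the two bounds actually available from Lemma \ref{lemma1} coincide: $\|QT_0\|_{2\to p_c}$ and $\|QT_0\|_{p_c\to p_c}$ are both $\lesssim\lambda^{\frac{3d+1}{2(d+1)\alpha}-N}$ (and the $q=\infty$ endpoint is worse), so interpolation gives nothing smaller at any intermediate $q$. Hence the $g$-contribution is at least $\lambda^{\frac{3d+1}{2(d+1)\alpha}-2}\|e_\lambda\|_2$, and a direct computation shows $\frac{3d+1}{2(d+1)\alpha}-2$ exceeds $\frac{d(3d+1)}{2(d+1)(2d+1)}\frac1\alpha-\frac{d+1}{2d+1}$ by $\frac{3d+1}{2d+1}\bigl(\frac{1}{2\alpha}-1\bigr)>0$ for every $\alpha<\frac12$; it even exceeds the Proposition \ref{prop2} exponent $\frac{d}{(d+1)\alpha}-1$. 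You acknowledge that the intermediate $L^q\to L^{p_c}$ resolvent bounds you would need lie outside Lemma \ref{lemma1}, and you never carry out the claimed optimization, only asserting that it ``works out to exactly'' the right answer; with the estimates at hand it cannot, since both endpoints already give the same insufficient bound. The missing idea is not a sharper choice of $q$ but leaving $\operatorname{Im}z$ free and balancing the $(\lambda^N-z)v$ and $g$ terms against each other in $\delta$.
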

Recall that Proposition \ref{prop2} gives for $0<\alpha<\frac12$,
\[\|e_\lambda\|_{L^{p_c}(K)}\ls \lambda^{\frac{d}{d+1}\frac1\alpha-1}\|e_\lambda\|_2.\]
Theorem \ref{thm4} gives us better bounds, since
\[\frac{d(3d+1)}{2(d+1)(2d+1)}\frac1\alpha-\frac{d+1}{2d+1}<\frac{d}{d+1}\frac1\alpha-1\iff \alpha<\frac12.\]
However, it is still weaker than the conjecture bound
\[\|e_\lambda\|_{L^{p_c}(K)}\ls \lambda^{\frac{d-1}{2d+2}\frac1\alpha}\|e_\lambda\|_2\]
since
\[\frac{d(3d+1)}{2(d+1)(2d+1)}\frac1\alpha-\frac{d+1}{2d+1}>\frac{d-1}{2d+2}\frac1\alpha \iff \alpha<\frac12.\]
Furthermore, one may observe that it is exactly a linear combination of the two bounds:
\[\frac{d(3d+1)}{2(d+1)(2d+1)}\frac1\alpha-\frac{d+1}{2d+1}=\theta\Big(\frac{d}{d+1}\frac1\alpha-1\Big)+(1-\theta)\Big(\frac{d-1}{2d+2}\frac1\alpha\Big)\]
where $\theta=\frac{d+1}{2d+1}$.

\begin{proof}
 It suffices to prove the estimate for $\frac1{N+1}\le \alpha<\frac1N$, $N\ge2$. Fix $z=\lambda^N+\lambda^{N-\delta}i$. Here $\delta>0$ will be determined later. On the one hand, by Lemma \ref{lemma7}, we have
 \[v=((-\Delta)^{N\alpha/2}-z)^{-1}((\lambda^N-z)v+\sum_{j=1}^{N-1}\lambda^{N-j}\beta(-\Delta)^{j\alpha/2}Te_\lambda-[\beta,(-\Delta)^{N\alpha/2}]e_\lambda).\]
 As in Section 2, we write \[((-\Delta)^{N\alpha/2}-z)^{-1}=T_0+R\]
 where $T_0=\frac{z^{(2-N\alpha)/(N\alpha)}}{N\alpha/2}(-\Delta-z^{2/(N\alpha)})^{-1}$. By \eqref{uest}, we have
 \[\|e_\lambda\|_{L^{p_c}(K)}\ls \|e_\lambda\|_2+\|Qv\|_{p_c}.\]
Then by using Lemma \ref{lemma1} and the estimates of $R$ and $Q$ in Section 2, we have
 \[\|QT_0\|_{2\to p_c}\ls \|Q\|_{p_c\to p_c}\|T_0\|_{2\to p_c}\ls\lambda^{\frac{\delta}2+\frac{d}{d+1}\frac1\alpha-N}\]
 \[\|QT_0\|_{p_c\to p_c}\ls \|Q\|_{p_c\to p_c}\|T_0\|_{p_c\to p_c}\ls\lambda^{\frac{3d+1}{2d+2}\delta-N}\]
 \[\|QR\|_{2\to p_c}\ls \lambda^{\frac{d}{d+1}\frac1\alpha-N}\]
  \[\|QR\|_{p_c\to p_c}\ls \lambda^{-N}.\]
 On the other hand, by Lemma \ref{lemma6} and Corollary \ref{cor1}
 \[\|(\lambda^N-z)v\|_2\ls \lambda^{N-\delta}\|e_\lambda\|_2\]
 \[\|\lambda^{N-j}\beta(-\Delta)^{j\alpha/2}Te_\lambda\|_{p_c}\ls \lambda^{N-j-1}\|e_\lambda\|_2,\ \ 1\le j\le N\]
 \[\|[\beta,(-\Delta)^{N\alpha/2}]e_\lambda\|_2\ls \lambda^{(N\alpha-1)/\alpha}\|e_\lambda\|_2.\]
 Therefore,
 \[\|(QT_0+QR)((\lambda^N-z)v)\|_{p_c}\ls \lambda^{\frac{d}{d+1}\frac1\alpha-\frac{\delta}2}\|e_\lambda\|_2\]
 \[\|(QT_0+QR)(\lambda^{N-j}\beta(-\Delta)^{j\alpha/2}Te_\lambda)\|_{p_c}\ls \lambda^{\frac{3d+1}{2d+2}\delta-2}\|e_\lambda\|_2, \ \ 1\le j\le N\]
 \[\|(QT_0+QR)([\beta,(-\Delta)^{N\alpha/2}]e_\lambda)\|_{p_c}\ls \lambda^{\frac{\delta}2-\frac1{d+1}\frac1\alpha}\|e_\lambda\|_2.\]
 Then it is straightforward to find the optimal choice of $\delta$ to minimize
 \[\max\Big\{\frac{d}{d+1}\frac1\alpha-\frac{\delta}2,\ \frac{3d+1}{2d+2}\delta-2,\ \frac{\delta}2-\frac1{d+1}\frac1\alpha\Big\}.\]If we set $\delta=\frac{d+\alpha(2d+2)}{\alpha(2d+1)}$, which is from solving  $\frac{d}{d+1}\frac1\alpha-\frac{\delta}2=\frac{3d+1}{2d+2}\delta-2$, then the theorem follows.
\end{proof}
\subsection{$L^\infty$ bounds}
\begin{theorem}\label{thm5}Let $d\ge1$ and $0<\alpha<\frac12$. If $e_\lambda$ satisfies \eqref{eigen}, then for $\lambda>1$
\[\|e_\lambda\|_{L^{\infty}(K)}\ls \lambda^{\frac{d(d+1)-4\alpha}{2\alpha(d+2)}}\|e_\lambda\|_2.\]
\end{theorem}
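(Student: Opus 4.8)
The plan is to run the same scheme as in the proof of Theorem \ref{thm4}, but now targeting the endpoint $p=\infty$ and optimizing separately. Fix $N\ge 2$ with $\frac1{N+1}\le\alpha<\frac1N$ and set $z=\lambda^N+\lambda^{N-\delta}i$ with a free parameter $\delta>0$. By Lemma \ref{lemma7} we have the identity
\[v=((-\Delta)^{N\alpha/2}-z)^{-1}\Big((\lambda^N-z)v+\sum_{j=1}^{N-1}\lambda^{N-j}\beta(-\Delta)^{j\alpha/2}Te_\lambda-[\beta,(-\Delta)^{N\alpha/2}]e_\lambda\Big),\]
and as before we split $((-\Delta)^{N\alpha/2}-z)^{-1}=T_0+R$ with $T_0=\frac{z^{(2-N\alpha)/(N\alpha)}}{N\alpha/2}(-\Delta-z^{2/(N\alpha)})^{-1}$. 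Using \eqref{uest} we get $\|e_\lambda\|_{L^\infty(K)}\ls\|e_\lambda\|_2+\|Qv\|_\infty$, so everything reduces to bounding $Q(T_0+R)$ applied to each of the three source terms, now in $L^\infty$.

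First I would collect the needed operator norms. For $R$: from \eqref{R} and \eqref{Q} one gets $\|QR\|_{2\to\infty}\ls\lambda^{\frac{d}{2\alpha}-N}$ and $\|QR\|_{p_c\to\infty}\ls\lambda^{\frac{d}{\alpha}\cdot\frac1{p_c}-N}$ (combining \eqref{QR}-type bounds with \eqref{Q}); for the $L^\infty\to L^\infty$ mapping needed to absorb the $\beta(-\Delta)^{j\alpha/2}Te_\lambda$ terms, $R$ maps $L^\infty$ to $L^\infty$ with norm $\ls\lambda^{-N}$ by \eqref{R1}-\eqref{R2} (the kernel $|R(x,y)|$ is integrable in $y$ with the right gain). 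For $T_0$: Lemma \ref{lemma1} with the dilated spectral parameter $z^{2/(N\alpha)}$, whose argument has distance to $[0,\infty)$ comparable to $\lambda^{-\delta/\alpha}$ after the $2/(N\alpha)$ power (care with the $N$-dependence here), yields $\|T_0\|_{2\to\infty}$ and $\|T_0\|_{p_c\to\infty}$ and $\|T_0\|_{\infty\to\infty}$ with explicit powers of $\lambda$ and $\delta$, and then $\|Q\|_{p_c\to\infty}\ls\lambda^{\frac{d}{\alpha}\cdot\frac1{p_c}}$ and $\|Q\|_{\infty\to\infty}\ls1$ from \eqref{Q} finish the composition bounds. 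For the source terms themselves: $\|(\lambda^N-z)v\|_2\ls\lambda^{N-\delta}\|e_\lambda\|_2$; $\|\lambda^{N-j}\beta(-\Delta)^{j\alpha/2}Te_\lambda\|_\infty\ls\lambda^{N-j-1}\|e_\lambda\|_2$ via Corollary \ref{cor1} (using the $L^2\to L^\infty$ smoothing of the heat semigroup, i.e. the $Q$-type bound applied to $\beta(-\Delta)^{j\alpha/2}Te_\lambda$, or an $L^\infty$ version of the corollary); and $\|[\beta,(-\Delta)^{N\alpha/2}]e_\lambda\|_2\ls\lambda^{(N\alpha-1)/\alpha}\|e_\lambda\|_2$ from Lemma \ref{lemma6}.

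Assembling, each of the three contributions to $\|Qv\|_\infty$ becomes $\lambda^{(\text{something linear in }1/\alpha\text{ and }\delta)}\|e_\lambda\|_2$: the first behaves like $\lambda^{c_1/\alpha-a_1\delta}$ (decreasing in $\delta$), the commutator term like $\lambda^{c_3/\alpha+a_3\delta}$ with the intermediate terms behaving like $\lambda^{-\text{const}+a_2\delta}$ (increasing in $\delta$). One then chooses $\delta$ to equalize the dominant increasing and decreasing exponents — exactly as in Theorem \ref{thm4}, solving the balance equation for $\delta$ — and reads off the resulting exponent, which should simplify to $\frac{d(d+1)-4\alpha}{2\alpha(d+2)}$. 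A sanity check: the claimed exponent is a convex combination of the $L^\infty$ bound $\frac{d}{\alpha}\cdot\frac12-1=\frac{d}{2\alpha}-1$ from Proposition \ref{prop2} and the conjectured $L^\infty$ bound $\frac{d-1}{2\alpha}$, with weight determined by $N$-independent algebra, which is the structural pattern one expects and a good way to pin down arithmetic slips.

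The main obstacle is bookkeeping the $N$-dependence cleanly: because the resolvent is taken at order $N\alpha$ with spectral parameter $z\approx\lambda^N$, the dilated parameter $z^{2/(N\alpha)}$ has modulus $\approx\lambda^{2/\alpha}$ and imaginary-part-to-modulus ratio $\approx\lambda^{-\delta/\alpha}$ \emph{independent of $N$}, which is what makes the final exponent $N$-free; verifying this cancellation and checking that all the intermediate $j$-terms ($1\le j\le N-1$) are genuinely lower-order after the optimal $\delta$ is chosen (so they never dominate) is the delicate part. A secondary point is ensuring the $L^\infty\to L^\infty$ and $L^{p_c}\to L^\infty$ mapping properties of $T_0$ and $R$ used here really follow from Lemma \ref{lemma1} and the kernel bounds \eqref{R1}-\eqref{R2} at the endpoint $p=\infty$; Young's inequality handles $R$, while for $T_0$ one invokes the $p_c\le p\le\infty$ clause of Lemma \ref{lemma1} together with the $Q$-smoothing to reach $L^\infty$. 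Once these are in place, the optimization is routine.
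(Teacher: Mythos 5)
Your proposal follows the paper's proof essentially verbatim: same reduction to $\frac1{N+1}\le\alpha<\frac1N$, same choice $z=\lambda^N+\lambda^{N-\delta}i$, same use of Lemma \ref{lemma7} and the $T_0+R$ splitting, same operator-norm bookkeeping via $\|Q\|_{p_c\to\infty}$ and $\|Q\|_{\infty\to\infty}$, and the same optimization in $\delta$ (the paper balances $\frac{d}{2\alpha}-\frac\delta2=\frac{d+1}{2}\delta-2$, giving $\delta=\frac{d+4\alpha}{\alpha(d+2)}$, after which the commutator term $\frac{d-2}{2\alpha}+\frac\delta2$ is indeed dominated since $\delta<\frac1\alpha$). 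The only thing missing is carrying out this final arithmetic, which lands exactly on the claimed exponent, and your convex-combination sanity check with weight $\theta=\frac{2}{d+2}$ is precisely the remark the paper makes after the theorem statement.
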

Proposition \ref{prop2} gives for $0<\alpha<\frac12$,
\[\|e_\lambda\|_{L^{\infty}(K)}\ls \lambda^{\frac{d}{2\alpha}-1}\|e_\lambda\|_2.\]
Theorem \ref{thm5} gives us  better bounds, since
\[\frac{d(d+1)-4\alpha}{2\alpha(d+2)}<\frac{d}{2\alpha}-1\iff \alpha<\frac12.\]
However, it is still weaker than the conjecture bound
\[\|e_\lambda\|_{L^{p_c}(K)}\ls \lambda^{\frac{d-1}{2\alpha}}\|e_\lambda\|_2\]
since
\[\frac{d(d+1)-4\alpha}{2\alpha(d+2)}>\frac{d-1}{2\alpha} \iff \alpha<\frac12.\]
Furthermore, one may observe that it is exactly a linear combination of the two bounds:
\[\frac{d(d+1)-4\alpha}{2\alpha(d+2)}=\theta\Big(\frac{d}{2\alpha}-1\Big)+(1-\theta)\Big(\frac{d-1}{2\alpha}\Big)\]
where $\theta=\frac{2}{d+2}$. As $d\to \infty$, we see that it approaches the conjecture bound.
\begin{proof}
  As before, it suffices to prove the estimate for $\frac1{N+1}\le \alpha<\frac1N$, $N\ge2$. Fix $z=\lambda^N+\lambda^{N-\delta}i$. Here $\delta>0$ will be determined later. On the one hand, by Lemma \ref{lemma7}, we have
 \[v=((-\Delta)^{N\alpha/2}-z)^{-1}((\lambda^N-z)v+\sum_{j=1}^{N-1}\lambda^{N-j}\beta(-\Delta)^{j\alpha/2}Te_\lambda-[\beta,(-\Delta)^{N\alpha/2}]e_\lambda).\]
 As in Section 2, we write \[((-\Delta)^{N\alpha/2}-z)^{-1}=T_0+R\]
 where $T_0=\frac{z^{(2-N\alpha)/(N\alpha)}}{N\alpha/2}(-\Delta-z^{2/(N\alpha)})^{-1}$.
 By \eqref{uest}, we have
 \[\|e_\lambda\|_{L^\infty(K)}\ls \|e_\lambda\|_2+\|Qv\|_\infty.\]
Then by applying Lemma \ref{lemma1} and the estimates of $R$ and $Q$ in Section 2, we obtain
 \[\|QT_0\|_{2\to \infty}\ls \|Q\|_{p_c\to\infty}\|T_0\|_{2\to p_c}\ls \lambda^{\frac{\delta}2+\frac{d}{2\alpha}-N}\]
 \[\|QT_0\|_{\infty\to \infty}\ls \|Q\|_{\infty\to\infty}\|T_0\|_{\infty\to \infty}\ls \lambda^{\frac{d+1}{2}\delta-N}\]
 \[\|QR\|_{2\to \infty}\ls \lambda^{\frac{d}{2\alpha}-N}\]
  \[\|QR\|_{\infty\to \infty}\ls \lambda^{-N}.\]
 On the other hand, by Lemma \ref{lemma6} and Corollary \ref{cor1}
 \[\|(\lambda^N-z)v\|_2\ls \lambda^{N-\delta}\|e_\lambda\|_2\]
 \[\|\lambda^{N-j}\beta(-\Delta)^{j\alpha/2}Te_\lambda\|_{\infty}\ls \lambda^{N-j-1}\|e_\lambda\|_2,\ \ 1\le j\le N\]
 \[\|[\beta,(-\Delta)^{N\alpha/2}]e_\lambda\|_2\ls \lambda^{(N\alpha-1)/\alpha}\|e_\lambda\|_2.\]
 Therefore,
 \[\|(QT_0+QR)((\lambda^N-z)v)\|_{\infty}\ls \lambda^{\frac{d}{2\alpha}-\frac{\delta}2}\|e_\lambda\|_2\]
 \[\|(QT_0+QR)(\lambda^{N-j}\beta(-\Delta)^{j\alpha/2}Te_\lambda)\|_{\infty}\ls \lambda^{\frac{d+1}{2}\delta-2}\|e_\lambda\|_2, \ \ 1\le j\le N\]
 \[\|(QT_0+QR)([\beta,(-\Delta)^{N\alpha/2}]e_\lambda)\|_{\infty}\ls \lambda^{\frac{d-2}{2\alpha}+\frac{\delta}2}\|e_\lambda\|_2.\]
 Now we need to find the optimal choice of $\delta$ to minimize
 \[\max\Big\{\frac{d}{2\alpha}-\frac{\delta}2,\ \frac{d+1}{2}\delta-2,\ \frac{d-2}{2\alpha}+\frac{\delta}2\Big\}.\]If we set $\delta=\frac{d+4\alpha}{\alpha(d+2)}$, which is from solving  $\frac{d}{2\alpha}-\frac{\delta}2=\frac{d+1}{2}\delta-2$, then the theorem follows.
\end{proof}

\section{ One dimensional estimates when $0<\alpha\le \frac12$}
In this section, we study one dimensional eigenfunction estimates. We have already proved the uniform bounds for the $L^\infty(K)$ norm of the eigenfunctions when $\frac12<\alpha<2$ by using the commutator estimate \eqref{comm} in the one dimensional case. So we only need to consider $\alpha\le \frac12$ in this section. Note that $p_c=\infty$ when $d=1$.

By Proposition \ref{prop2}, we have for $\alpha=\frac12$,
\begin{equation}\label{d11}
  \|e_\lambda\|_{L^p(K)}\ls (\log\lambda)^{\frac14-\frac1{2p}}\|e_\lambda\|_2,\ 2\le p\le \infty.
\end{equation}
By Theorem \ref{thm5} and interpolation, we have for $0<\alpha<\frac12$
\begin{equation}\label{d12}
  \|e_\lambda\|_{L^p(K)}\ls \lambda^{\frac{1-2\alpha}{3\alpha}(1-\frac2p)}\|e_\lambda\|_2,\ 2\le p\le \infty.
\end{equation}
Recall that Kwasnicki \cite{kwasnicki}, Proposition 2 proved uniform $L^\infty$ bounds when $\frac12\le \alpha<2$ by constructing approximate eigenfunctions. See also \cite{stos} for the case $\alpha=1$. If we combine the resolvent estimates with the approximation method, we may prove uniform bounds in a larger range of $\alpha$. Indeed, we will prove uniform bounds for the $L^\infty(K)$ norm when $\frac14\le \alpha<2$, and the $L^p(K)$ norm when $p\le \frac{2(1-2\alpha)}{1-4\alpha}$ and $0<\alpha<\frac14$.

\subsection{Approximation results}
Let $0<\alpha\le \frac12$, $\Omega=(-1,1)$, $n\ge1$. Let $e_{\lambda_n}$ be the  eigenfunctions of $(-\Delta)^{\alpha/2}|_\Omega$ associated with eigenvalues $\lambda_n$, and $\|e_{\lambda_n}\|_2=1$. Let $\tilde e_{\lambda_n}$ be the approximate eigenfunctions defined by \cite{kwasnicki}, Equation $(13)$  associated with $\mu_n\approx n$. By the definition of $\tilde e_{\lambda_n}$, we have
\begin{equation}\label{phitilde}
  \|\tilde e_{\lambda_n}\|_\infty\ls 1.
\end{equation}
We need the following approximation results from \cite{kwasnicki}.

\begin{lemma}[\cite{kwasnicki}, Theorem 1]For large $n$ (i.e. $n\ge N(\alpha)$ for some $N(\alpha)>0$)
\begin{equation}\label{eigenvalue}
  |\lambda_n-\mu_n^\alpha|\ls \frac1n\approx \lambda_n^{-1/\alpha}.
\end{equation}
\end{lemma}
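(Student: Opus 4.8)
The plan is to recover the eigenvalue asymptotic from the approximate eigenfunctions $\tilde e_{\lambda_n}$ of \cite{kwasnicki}, by showing they are $O(1/n)$-accurate quasimodes and then invoking the spectral theorem together with a Weyl-law counting argument. Recall the structure of $\tilde e_{\lambda_n}$ on $\Omega=(-1,1)$: in the bulk it is the plain oscillation $\sin(\mu_n x+\varphi_n)$, which on all of $\mathbb R$ is a generalized eigenfunction of $(-\Delta)^{\alpha/2}$ with eigenvalue $\mu_n^\alpha$; near each endpoint it is patched to the profile of the half-line problem for $(-\Delta)^{\alpha/2}$ on $(0,\infty)$ with Dirichlet data on the complement — the exact generalized eigenfunction there, which behaves like $\delta_\Omega^{\alpha/2}$ times an oscillating factor — while the frequency $\mu_n\approx n$ and the phase $\varphi_n$ are fixed by the phase-matching (Bohr--Sommerfeld type) condition that makes the left and right profiles compatible. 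I would first record that $\tilde e_{\lambda_n}$ lies in the form domain $\widetilde H^{\alpha/2}(\Omega)=\{u\in H^{\alpha/2}(\mathbb R): u=0 \text{ on } \Omega^c\}$ of the restricted fractional Laplacian and that, by construction, $\|\tilde e_{\lambda_n}\|_2\approx 1$, in agreement with \eqref{phitilde}.

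The core is the quasimode bound
\[\big\|\big((-\Delta)^{\alpha/2}|_\Omega-\mu_n^\alpha\big)\tilde e_{\lambda_n}\big\|_{L^2(\Omega)}\ls \tfrac1n\,\|\tilde e_{\lambda_n}\|_2.\]
To prove it I would estimate $\big((-\Delta)^{\alpha/2}-\mu_n^\alpha\big)\tilde e_{\lambda_n}$ separately near the two endpoints and in the bulk. In the bulk, $(-\Delta)^{\alpha/2}$ of the full patched function equals $\mu_n^\alpha\tilde e_{\lambda_n}$ plus the nonlocal tails produced by the endpoint pieces, which — being integrals of an oscillating profile of frequency $\approx\mu_n$ against the smooth kernel $\sim|x-y|^{-1-\alpha}$ over a region at a fixed distance — are $O(\mu_n^{-1})$ by oscillation; near each endpoint one uses that the half-line profile is an \emph{exact} generalized eigenfunction, so the error is just the analogous tail reaching in from the far endpoint and the bulk, again $O(\mu_n^{-1})$, together with the transition error, which is $O(\mu_n^{-1})$ once the patch is placed where the profile and the bulk oscillation agree to high order — this matching is exactly what pins down the quantization condition defining $\mu_n$. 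Carrying out this bookkeeping of the nonlocal contributions and the transition errors tightly enough to land the sharp $1/n$ is the step I expect to be the main obstacle.

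Granting the quasimode bound, the spectral theorem for the self-adjoint operator $(-\Delta)^{\alpha/2}|_\Omega$ produces, for each large $n$, an eigenvalue within distance $\ls 1/n$ of $\mu_n^\alpha$. It remains to identify it with $\lambda_n$: the $\mu_n$ are increasing with $\mu_{n+1}-\mu_n\gs 1$, so the $\mu_n^\alpha$ are separated by $\gs n^{\alpha-1}\gg 1/n$; hence no true eigenvalue can sit within $1/n$ of two of them, and distinct $\mu_n^\alpha$ cannot be attached to the same eigenvalue. Matching the count of the $\mu_n^\alpha$ below a given level with the Weyl asymptotics $N(\lambda)=\#\{\lambda_k\le\lambda\}=c_\alpha\lambda^{1/\alpha}+o(\lambda^{1/\alpha})$ for the one-dimensional restricted fractional Laplacian then forces the $n$-th approximate eigenvalue to pair with the $n$-th true one, giving $|\lambda_n-\mu_n^\alpha|\ls 1/n$. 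Finally $\lambda_n\approx n^\alpha$, itself a consequence of the Weyl law, turns this into $|\lambda_n-\mu_n^\alpha|\ls\lambda_n^{-1/\alpha}$, the stated form.
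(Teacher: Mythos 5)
First, note that the paper does not prove this lemma at all: it is imported verbatim as \cite{kwasnicki}, Theorem 1, so there is no internal proof to compare against. Your sketch is a reconstruction of Kwa\'snicki's own argument, and its overall architecture (quasimodes built by patching the bulk oscillation $\sin(\mu_n x+\varphi_n)$ to the exact half-line generalized eigenfunctions, a quasimode error of order $1/n$, then spectral identification) is indeed the right one.

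There are, however, two genuine gaps. The first you acknowledge yourself: the quasimode bound $\|((-\Delta)^{\alpha/2}-\mu_n^\alpha)\tilde e_{\lambda_n}\|_{L^2(\Omega)}\ls 1/n$ is the technical core of \cite{kwasnicki} (it rests on precise pointwise estimates for the half-line profile and careful control of the nonlocal tails and the transition region), and asserting ``$O(\mu_n^{-1})$ by oscillation'' is not a proof. The second gap is in the identification step and is a real logical flaw as written: a one-term Weyl law $N(\lambda)=c_\alpha\lambda^{1/\alpha}+o(\lambda^{1/\alpha})$ cannot force the $n$-th quasi-eigenvalue to pair with the $n$-th true eigenvalue, because an error that is merely $o(\lambda^{1/\alpha})$ still permits an unboundedly growing discrepancy in the eigenvalue count, i.e.\ extra eigenvalues interlaced between consecutive $\mu_n^\alpha$ that shift the labeling; your separation argument only rules out two quasimodes attaching to the \emph{same} eigenvalue, not the appearance of eigenvalues unaccounted for by any quasimode. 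To close this one needs what Kwa\'snicki actually uses: the almost-orthogonality $|\langle \tilde e_{\lambda_n},\tilde e_{\lambda_m}\rangle|=\delta_{nm}+O((n+m)^{-1})$ of the quasimode family, which via the min--max principle yields the one-sided bound $\lambda_n\le \mu_n^\alpha+C/n$, combined with an $O(1)$-accurate control of the counting function (e.g.\ the comparison of the fractional eigenvalues with powers of the Dirichlet Laplacian eigenvalues on the interval) to obtain the matching lower bound. With those two ingredients supplied, the conclusion $|\lambda_n-\mu_n^\alpha|\ls 1/n\approx\lambda_n^{-1/\alpha}$ follows as you state.
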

 \begin{lemma}[\cite{kwasnicki}, Lemma 1, Equation (22)]
   \begin{equation}\label{apperr}
\|(-\Delta)^{\alpha/2}\tilde e_{\lambda_n}-\mu_n^\alpha\tilde e_{\lambda_n}\|_{L^\infty(\Omega)}\ls \frac1n\approx \lambda_n^{-1/\alpha}.
\end{equation}
 \end{lemma}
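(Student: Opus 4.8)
The plan is to derive \eqref{apperr} directly from the explicit construction of $\tilde e_{\lambda_n}$ in \cite{kwasnicki}, Eq.~(13), which glues together two half-line generalized eigenfunctions. First I would recall the ingredient: there is an explicit function $\phi_\mu$ on $\mathbb R$ --- produced by a Wiener--Hopf factorization / contour integral --- supported in $[0,\infty)$, which is an \emph{exact} generalized eigenfunction of the Dirichlet fractional Laplacian on the half-line,
\[(-\Delta)^{\alpha/2}\phi_\mu(y)=\mu^\alpha\phi_\mu(y),\qquad y>0.\]
It scales as $\phi_\mu(y)=\phi_1(\mu y)$ (so $\|\phi_\mu\|_\infty\ls1$ uniformly in $\mu$), behaves like $(\mu y)^{\alpha/2}$ as $y\to0^+$, and --- the crucial point --- admits a uniform large-argument expansion
\[\phi_\mu(y)=\sin(\mu y+\vartheta_\alpha)+r_\mu(y),\qquad |r_\mu(y)|\ls(\mu y)^{-1-\alpha}\quad(\mu y\ge1),\]
for an explicit phase $\vartheta_\alpha$. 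With $\Omega=(-1,1)$, one fixes a small $c\in(0,1)$ and $\psi\in C_0^\infty(\mathbb R)$ with $\psi\equiv1$ on $(-\infty,-c]$ and $\psi\equiv0$ on $[c,\infty)$, and sets
\[\tilde e_{\lambda_n}(x)=\psi(x)\,\phi_{\mu_n}(1+x)+\sigma_n\,(1-\psi(x))\,\phi_{\mu_n}(1-x),\]
where $\sigma_n\in\{\pm1\}$ and $\mu_n\approx n$ are selected by the quantization condition $\sin(\mu_n(1+x)+\vartheta_\alpha)=\sigma_n\sin(\mu_n(1-x)+\vartheta_\alpha)$ identically in $x$ --- equivalently $\cos(\mu_n+\vartheta_\alpha)=0$ (when $\sigma_n=1$) or $\sin(\mu_n+\vartheta_\alpha)=0$ (when $\sigma_n=-1$) --- which has a solution $\mu_n\approx n$ for each $n$. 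Since $\phi_{\mu_n}(1+\cdot)$ is supported in $[-1,\infty)$ and $\phi_{\mu_n}(1-\cdot)$ in $(-\infty,1]$, the function $\tilde e_{\lambda_n}$ is supported in $[-1,1]$ and \eqref{phitilde} is immediate.

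Next I would reduce the residual $r_n:=(-\Delta)^{\alpha/2}\tilde e_{\lambda_n}-\mu_n^\alpha\tilde e_{\lambda_n}$ on $\Omega$ to a single nonlocal commutator. Write $v_+(x)=\phi_{\mu_n}(1+x)$, $v_-(x)=\phi_{\mu_n}(1-x)=v_+(-x)$, $w=v_+-\sigma_n v_-$, so $\tilde e_{\lambda_n}=\sigma_n v_-+\psi w$. Translation and reflection invariance of $(-\Delta)^{\alpha/2}$ together with the half-line identity give $(-\Delta)^{\alpha/2}v_+=\mu_n^\alpha v_+$ on $(-1,\infty)$ and $(-\Delta)^{\alpha/2}v_-=\mu_n^\alpha v_-$ on $(-\infty,1)$, hence $(-\Delta)^{\alpha/2}w=\mu_n^\alpha w$ on all of $\Omega=(-1,1)$. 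Consequently, on $\Omega$,
\[r_n=\sigma_n(-\Delta)^{\alpha/2}v_-+(-\Delta)^{\alpha/2}(\psi w)-\mu_n^\alpha\big(\sigma_n v_-+\psi w\big)=[\,(-\Delta)^{\alpha/2},\psi\,]w,\]
so that, by the pointwise integral formula, $r_n(x)=c_{1,\alpha}\int_{\mathbb R}(\psi(x)-\psi(y))\,w(y)\,|x-y|^{-1-\alpha}\,dy$ (the integral converging absolutely since $\psi$ is Lipschitz and $w$ is bounded), and the lemma reduces to showing this is $\ls 1/n$ for every $x\in\Omega$. The key observation is that, by the quantization, the leading sines cancel exactly: writing $v_\pm=s_\pm+\rho_\pm$ with $s_+(x)=\sin(\mu_n(1+x)+\vartheta_\alpha)$ and $s_-(x)=\sin(\mu_n(1-x)+\vartheta_\alpha)$ (defined on all of $\mathbb R$), one has $s_+-\sigma_n s_-\equiv0$, so $w=\rho_+-\sigma_n\rho_-$ consists entirely of tail corrections. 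On $\Omega$ one has $|\rho_\pm(y)|=|r_{\mu_n}(1\pm y)|\ls(\mu_n(1\pm y))^{-1-\alpha}$ once $\mu_n(1\pm y)\ge1$, and $|\rho_\pm|\ls1$ on the two zones of length $\ls\mu_n^{-1}$ near the endpoints; outside $\Omega$, $\rho_+=-s_+$ on $(-\infty,-1)$ and $\rho_-=-s_-$ on $(1,\infty)$ are bounded and oscillate at frequency $\mu_n$.

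With this structure in hand, I would bound the kernel integral by splitting the $y$-range into: (i) $y\in\Omega$ bounded away from $\pm1$, where $|w(y)|\ls\mu_n^{-1-\alpha}$ and the integral, including its $O(|x-y|^{-\alpha})$ diagonal singularity, is $O(1)$, so the contribution is $O(\mu_n^{-1})$; (ii) the two endpoint zones of length $\ls\mu_n^{-1}$, where $|w|\ls1$ but the kernel is bounded (either $\psi(x)-\psi(y)=0$ when $x$ is itself near an endpoint, or $|x-y|\ge 1-c$ when $x$ lies in $\operatorname{supp}\psi'$), again $O(\mu_n^{-1})$; and (iii) $y\notin[-1,1]$, where $w(y)$ is bounded, equals a pure sum of $\mu_n$-frequency sines, and is paired with a kernel that is smooth in $y$ with an $L^1$ derivative on the relevant range, so one integration by parts in the nonstationary phase (phase derivative of modulus $\mu_n$) yields $O(\mu_n^{-1})$. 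The exponent $-1-\alpha<-1$ in the expansion of $r_\mu$ is exactly what makes the endpoint integral $\int_{\mu_n^{-1}}^{c'}(\mu_n s)^{-1-\alpha}\,ds\ls\mu_n^{-1}$ in (i) converge without a logarithm. Summing the three contributions gives $\|r_n\|_{L^\infty(\Omega)}\ls\mu_n^{-1}\approx 1/n\approx\lambda_n^{-1/\alpha}$, which is \eqref{apperr}.

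The hard part is not this bookkeeping but the input it rests on: establishing, from the Wiener--Hopf / contour representation of $\phi_\mu$, the uniform expansion $\phi_\mu(y)=\sin(\mu y+\vartheta_\alpha)+r_\mu(y)$ with the correct phase and --- decisively --- with $r_\mu$ decaying \emph{strictly faster} than $(\mu y)^{-1}$ at infinity (otherwise step (i) costs a logarithmic factor and one loses the clean rate $1/n$), together with the $(\mu y)^{\alpha/2}$ behaviour near the origin that is needed for the gluing; and the quantization fixing $\mu_n$ and $\sigma_n$. All of this is carried out in \cite{kwasnicki}; granted it, the argument above derives \eqref{apperr} from the exact half-line eigenidentity plus an elementary nonlocal commutator estimate, with all of the residual coming from the slowly decaying (inside $\Omega$) and non-decaying (outside $\Omega$) tails of the two boundary profiles leaking through the nonlocality of $(-\Delta)^{\alpha/2}$. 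The companion estimate \eqref{eigenvalue} is then obtained by comparing the $\tilde e_{\lambda_n}$ just constructed with the true Dirichlet spectrum.
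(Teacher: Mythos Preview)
The paper does not prove this lemma at all; it simply cites \cite{kwasnicki}, Lemma~1, and remarks that the pointwise bound can be read off from equation~(22) in that proof. Your proposal goes considerably further: you reconstruct the mechanism behind Kwa\'snicki's estimate --- writing $\tilde e_{\lambda_n}$ as a glued pair of exact half-line eigenfunctions, reducing the residual on $\Omega$ to the nonlocal commutator $[(-\Delta)^{\alpha/2},\psi]w$, using the quantization of $\mu_n$ to cancel the leading sines so that $w$ consists only of tail corrections, and then bounding the commutator integral in three zones (bulk, endpoint layers of width $\mu_n^{-1}$, and exterior via one nonstationary-phase integration by parts). This outline is sound and faithfully captures the content of Kwa\'snicki's argument; in particular your algebraic reduction $r_n=[(-\Delta)^{\alpha/2},\psi]w$ on $\Omega$ is correct, and your observation that the exponent $-1-\alpha<-1$ in the remainder is what prevents a logarithmic loss is on point. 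The only caveats are (i) the precise decay rate you quote for the half-line remainder and the exact form of the gluing should be cross-checked against \cite{kwasnicki}, since the construction there is somewhat specific, and (ii) as you yourself stress, the genuinely hard input --- the Wiener--Hopf representation of $\phi_\mu$ and its uniform large-argument expansion with the stated decay --- is imported wholesale. Granted that input, your commutator bookkeeping delivers the $L^\infty$ bound, which is exactly what the paper asserts but defers entirely to \cite{kwasnicki}.
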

It is stronger than the $L^2$ estimate stated there, but the $L^\infty$ estimate follows directly from the equation (22) in the proof.

\begin{lemma}[\cite{kwasnicki}, Proposition 1] For large $n$, we have
  \begin{equation}\label{L2err}
 \|e_{\lambda_n}-\tilde e_{\lambda_n}\|_2\ls \frac1{n^\alpha}\approx \lambda_n^{-1}.
\end{equation}
\end{lemma}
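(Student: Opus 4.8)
The plan is to run the standard quasimode-to-mode argument in the eigenbasis of the Dirichlet fractional Laplacian $A:=(-\Delta)^{\alpha/2}|_\Omega$ on $\Omega=(-1,1)$, exploiting that the approximate eigenvalues $\mu_n^\alpha$ are uniformly separated because the $\mu_k$ are (essentially) equally spaced. Let $\{e_{\lambda_k}\}_{k\ge1}$ be an orthonormal basis of $L^2(\Omega)$ of eigenfunctions of $A$, with $\lambda_1\le\lambda_2\le\cdots$. Since the $\tilde e_{\lambda_n}$ from \eqref{phitilde} are bounded and supported in $\overline{\Omega}$, I expand $\tilde e_{\lambda_n}=\sum_k c_k e_{\lambda_k}$ with $c_k=\langle\tilde e_{\lambda_n},e_{\lambda_k}\rangle$, and I set $f:=\bigl((-\Delta)^{\alpha/2}\tilde e_{\lambda_n}-\mu_n^\alpha\tilde e_{\lambda_n}\bigr)\big|_\Omega$. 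By \eqref{apperr} and $|\Omega|=2$,
\[\|f\|_{L^2(\Omega)}\le\sqrt2\,\|f\|_{L^\infty(\Omega)}\ls\lambda_n^{-1/\alpha},\]
and, using the symmetry of $(-\Delta)^{\alpha/2}$ on $\mathbb R^d$ together with $\operatorname{supp}\tilde e_{\lambda_n}\subset\overline{\Omega}$ and $(-\Delta)^{\alpha/2}e_{\lambda_k}=\lambda_k e_{\lambda_k}$ in $\Omega$, one gets $\langle f,e_{\lambda_k}\rangle=(\lambda_k-\mu_n^\alpha)c_k$ for every $k$.

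The key step will be the uniform spectral gap estimate
\[|\lambda_k-\mu_n^\alpha|\gs\lambda_n^{1-\frac1\alpha}\approx n^{\alpha-1}\qquad\text{for all }k\ne n\text{ and }n\text{ large.}\]
To prove it, recall that Kwa\'snicki's $\mu_k$ satisfy $\mu_{k+1}-\mu_k\approx1$ (an arithmetic progression up to a lower-order correction) and $\mu_k\approx k$; hence, since $0<\alpha<1$, the mean value theorem gives $|\mu_k^\alpha-\mu_n^\alpha|\gs\max(k,n)^{\alpha-1}|k-n|\gs n^{\alpha-1}$ for every $k\ne n$, and combining this with \eqref{eigenvalue}, which yields $|\lambda_j-\mu_j^\alpha|\ls j^{-1}$ (so of lower order than $j^{\alpha-1}$), together with the monotonicity of $j\mapsto\lambda_j$, the claim follows. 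In particular $\lambda_k\ne\mu_n^\alpha$ for $k\ne n$, so $c_k=\langle f,e_{\lambda_k}\rangle/(\lambda_k-\mu_n^\alpha)$ there, and Parseval's identity gives
\[\bigl\|\tilde e_{\lambda_n}-c_ne_{\lambda_n}\bigr\|_2^2=\sum_{k\ne n}|c_k|^2\le\Bigl(\sup_{k\ne n}|\lambda_k-\mu_n^\alpha|^{-2}\Bigr)\|f\|_2^2\ls\lambda_n^{-2(1-\frac1\alpha)}\cdot\lambda_n^{-\frac2\alpha}=\lambda_n^{-2}.\]

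Finally I would pin down $c_n$. The asymptotics \eqref{eigenvalue} and the gap estimate show that $\lambda_n$ is the only eigenvalue within $o(n^{\alpha-1})$ of $\mu_n^\alpha$, so the labels match and it is meaningful to compare $\tilde e_{\lambda_n}$ with $e_{\lambda_n}$. Since $\|\tilde e_{\lambda_n}\|_2=1+O(\lambda_n^{-1})$ (from the explicit formula in \cite{kwasnicki}), the Pythagorean identity $\|\tilde e_{\lambda_n}\|_2^2=|c_n|^2+\|\tilde e_{\lambda_n}-c_ne_{\lambda_n}\|_2^2$ forces $|c_n|=1+O(\lambda_n^{-1})$, and choosing the sign of $e_{\lambda_n}$ so that $c_n>0$ gives $c_n=1+O(\lambda_n^{-1})$. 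Then
\[\|e_{\lambda_n}-\tilde e_{\lambda_n}\|_2\le|1-c_n|\,\|e_{\lambda_n}\|_2+\|c_ne_{\lambda_n}-\tilde e_{\lambda_n}\|_2\ls\lambda_n^{-1}\approx n^{-\alpha},\]
which is \eqref{L2err}. The main obstacle is exactly the uniform gap estimate: one must rule out that some $\lambda_k$ with $k\ne n$ approaches the quasi-eigenvalue $\mu_n^\alpha$ faster than the scale $n^{\alpha-1}$ fixed by the spacing of the $\mu_k$; once that is secured, the rest is bookkeeping with \eqref{apperr}, \eqref{eigenvalue} and Parseval.
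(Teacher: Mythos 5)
The paper does not actually prove this lemma: it is imported verbatim from Kwa\'snicki's Proposition~1, so there is no internal proof to compare with. Your reconstruction is correct and is, in substance, the argument of the cited source: expand $\tilde e_{\lambda_n}$ in the Dirichlet eigenbasis, convert the quasimode defect into the coefficient identity $\langle f,e_{\lambda_k}\rangle=(\lambda_k-\mu_n^\alpha)c_k$ (legitimate here because both functions vanish off $\overline\Omega$), and divide by the spectral gap. The gap bound $|\lambda_k-\mu_n^\alpha|\gs n^{\alpha-1}$ for $k\ne n$ does follow from the exact spacing $\mu_{k+1}-\mu_k=\pi/2$ of Kwa\'snicki's $\mu_k$ together with \eqref{eigenvalue} (plus the trivial remark that the finitely many low eigenvalues stay bounded while $\mu_n^\alpha\to\infty$), and the exponents balance exactly: $n^{1-\alpha}\cdot n^{-1}=n^{-\alpha}\approx\lambda_n^{-1}$. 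The only input you invoke that is recorded neither in the statement nor elsewhere in the paper is the near-normalization $\|\tilde e_{\lambda_n}\|_2=1+O(n^{-\alpha})$; this is available from the explicit formula (the oscillatory main term has $L^2$ norm $1+O(1/n)$, and the boundary correctors contribute $O(n^{-1/2})$, which is $O(n^{-\alpha})$ precisely in the range $\alpha\le\tfrac12$ where the lemma is used), but it deserves to be isolated as a hypothesis, since the final bound degrades to whatever error this normalization carries.
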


\subsection{$L^\infty$ bounds}
\begin{theorem} Let  $\Omega=(-1,1)$. Let $K\subset\subset \Omega$ be a compact set. Then for $\lambda>1$ we have
\begin{equation}\label{Linftybd0}\|e_{\lambda}\|_{L^\infty(K)}\ls \|e_{\lambda}\|_2,\ \frac14\le \alpha<2\end{equation}
\begin{equation}\label{Linftybd1}
  \|e_{\lambda}\|_{L^\infty(K)}\ls \lambda^{\frac1{2\alpha}-2}\|e_{\lambda}\|_2,\ \frac18\le \alpha<\frac14
\end{equation}
\begin{equation}\label{Linftybd2}
  \|e_{\lambda}\|_{L^\infty(K)}\ls \lambda^{\frac1{3\alpha}-\frac23}\|e_{\lambda}\|_2,\ 0< \alpha<\frac18.
\end{equation}
The constant may depend on $\alpha$ and $K$, but it is independent of $\lambda$.
\end{theorem}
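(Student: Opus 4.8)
Throughout, normalize $\|e_{\lambda}\|_2=1$, which is harmless by linearity, and write $\lambda=\lambda_n$, $\tilde e_\lambda=\tilde e_{\lambda_n}$, $\mu=\mu_n$. The plan is to split $e_\lambda=\tilde e_\lambda+r$ into Kwa\'snicki's uniformly bounded approximate eigenfunction plus an error $r$ that is $O(\lambda^{-1})$ in $L^2$ and solves the eigenvalue equation with an $O(\lambda^{-1/\alpha})$ forcing, and then to control $r$ in the interior by re-running the resolvent/heat-kernel scheme of Section~2 with the spectral parameter taken much closer to the real axis than in \eqref{T0}. First, for $\tfrac12\le\alpha<2$ the bound \eqref{Linftybd0} is already known (Kwa\'snicki; and Section~3 when $d=1$), and for $0<\alpha<\tfrac18$ the bound \eqref{Linftybd2} is exactly Theorem~\ref{thm5} with $d=1$, since $\frac{d(d+1)-4\alpha}{2\alpha(d+2)}\big|_{d=1}=\frac1{3\alpha}-\frac23$. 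So it suffices to prove
\[\|e_\lambda\|_{L^\infty(K)}\ls\lambda^{\max\{0,\,\frac1{2\alpha}-2\}}\|e_\lambda\|_2,\qquad 0<\alpha<\tfrac12,\]
which yields \eqref{Linftybd0} on $[\tfrac14,\tfrac12)$ (exponent $\le0$) and \eqref{Linftybd1} on $[\tfrac18,\tfrac14)$.

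Take $\lambda=\lambda_n$ large ($n<N(\alpha)$ being trivial) and set $r=e_{\lambda_n}-\tilde e_{\lambda_n}$, supported in $\overline\Omega$. By \eqref{phitilde}, \eqref{eigenvalue}, \eqref{apperr}, \eqref{L2err} we have $\|\tilde e_{\lambda_n}\|_\infty\ls1$, $\|r\|_2\ls\lambda^{-1}$, and, since $(-\Delta)^{\alpha/2}e_{\lambda_n}=\lambda e_{\lambda_n}$ on $\Omega$,
\[(-\Delta)^{\alpha/2}r=\lambda r+g\ \text{ on }\Omega,\qquad g=(\lambda-\mu^\alpha)\tilde e_{\lambda_n}-\big((-\Delta)^{\alpha/2}\tilde e_{\lambda_n}-\mu^\alpha\tilde e_{\lambda_n}\big),\quad\|g\|_{L^\infty(\Omega)}\ls\lambda^{-1/\alpha}.\]
Using $P_t^\Omega e_{\lambda_n}=e^{-t\lambda}e_{\lambda_n}$ together with a Duhamel formula for $P_t^\Omega\tilde e_{\lambda_n}$ (whose generator acts on $\tilde e_{\lambda_n}$ as multiplication by $\mu^\alpha$ up to $f$, by \eqref{apperr}) and \eqref{eigenvalue}, one gets $\|P_t^\Omega\tilde e_{\lambda_n}-e^{-1}\tilde e_{\lambda_n}\|_\infty\ls\lambda^{-1-1/\alpha}$ at $t=\lambda^{-1}$, hence $r=e\,P_t^\Omega r+h$ with $\|h\|_\infty\ls\lambda^{-1-1/\alpha}$. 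Choosing $\beta=1$ on $K_1$ with $K\subset\subset K_1\subset\subset\Omega$ and arguing exactly as for \eqref{uest}, for $x\in K$,
\[\|r\|_{L^\infty(K)}\ls\|Q(\beta r)\|_\infty+\lambda^{-2}\|e_\lambda\|_2.\]

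The crux is the sharpened commutator bound $\|[\beta,(-\Delta)^{\alpha/2}]r\|_2\ls\lambda^{-2}\|e_\lambda\|_2$ for $0<\alpha<\tfrac12$ — much better than the $\ls\|r\|_2\ls\lambda^{-1}$ available in general. Split $r=\lambda^{-1}(-\Delta)^{\alpha/2}r+Tr$ with $Tr=r-\lambda^{-1}(-\Delta)^{\alpha/2}r$. By Proposition~\ref{prop1}, $\|[\beta,(-\Delta)^{\alpha/2}](\lambda^{-1}(-\Delta)^{\alpha/2}r)\|_2\ls\lambda^{-1}\|(-\Delta)^{\alpha/2}r\|_{H^{\alpha-1}}$; the Fourier multiplier $|\xi|^\alpha(1+|\xi|^2)^{(\alpha-1)/2}$ is bounded \emph{precisely} when $\alpha\le\tfrac12$, so this is $\ls\lambda^{-1}\|r\|_2\ls\lambda^{-2}$. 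For $Tr$, the equation forces $Tr=-\lambda^{-1}g$ on $\Omega$ and $Tr=-\lambda^{-1}(-\Delta)^{\alpha/2}r$ on $\Omega^c$; inserting this into $[\beta,(-\Delta)^{\alpha/2}]Tr(x)=c_{d,\alpha}\int\frac{(\beta(x)-\beta(y))Tr(y)}{|x-y|^{d+\alpha}}\,dy$ and using $\operatorname{supp}\beta\subset\subset\Omega$, the $y\in\Omega$ part is $\ls\lambda^{-1}\|g\|_\infty\ls\lambda^{-1-1/\alpha}$ (the commutator kernel has integrable mass), and the $y\in\Omega^c$ part reduces, exactly as in the proof of Theorem~\ref{thm3}, to $\lambda^{-1}\int_\Omega\rho(w)^{-\alpha}|r(w)|\,dw\ls\lambda^{-1}\big(\int_\Omega\rho^{-2\alpha}\big)^{1/2}\|r\|_2\ls\lambda^{-2}$, the integral being finite since $2\alpha<1$. (At $\alpha=\tfrac12$ both steps break — a logarithm appears, cf.\ \eqref{d11}.)

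Finally take $z=\lambda+i\lambda^{-1}$ (so $|z|\approx\lambda$ and $\operatorname{dist}((z/|z|)^{2/\alpha},[0,\infty))\approx\lambda^{-2}$) and write $((-\Delta)^{\alpha/2}-z)^{-1}=T_0+R$ as in Section~2, so $\beta r=(T_0+R)\tilde w$ with $\tilde w=(\lambda-z)\beta r+\beta g-[\beta,(-\Delta)^{\alpha/2}]r$, whence $\|\tilde w\|_2\ls\lambda^{-1}\|\beta r\|_2+\lambda^{-1/\alpha}+\lambda^{-2}\ls\lambda^{-2}\|e_\lambda\|_2$ (using $\alpha<\tfrac12$). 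Plugging this $z$ into \eqref{T0} and \eqref{Q} gives, in $d=1$ (so $p_c=\infty$), $\|QT_0\|_{2\to\infty}\ls\lambda^{1/(2\alpha)}$, while \eqref{QR} (which is insensitive to the choice of $z$) gives $\|QR\|_{2\to\infty}\ls\lambda^{1/(2\alpha)-1}$; therefore
\[\|r\|_{L^\infty(K)}\ls\big(\|QT_0\|_{2\to\infty}+\|QR\|_{2\to\infty}\big)\|\tilde w\|_2+\lambda^{-2}\|e_\lambda\|_2\ls\lambda^{\frac1{2\alpha}-2}\|e_\lambda\|_2,\]
so $\|e_\lambda\|_{L^\infty(K)}\le\|\tilde e_{\lambda_n}\|_{L^\infty(K)}+\|r\|_{L^\infty(K)}\ls\lambda^{\max\{0,\frac1{2\alpha}-2\}}\|e_\lambda\|_2$, as desired. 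The choice $\operatorname{Im}z=\lambda^{-1}$ is forced: with $\operatorname{Im}z=\lambda^{s}$ the term $\|QT_0\|_{2\to\infty}\|\tilde w\|_2$ is of size $\lambda^{\frac1{2\alpha}+\frac s2-\frac32}$ (from the $(\lambda-z)\beta r$ part of $\tilde w$) plus $\lambda^{\frac1{2\alpha}-\frac52-\frac s2}$ (from the commutator part), and $\max$ of these over $s$ is minimized at $s=-1$. The main obstacle is the sharpened commutator estimate — producing the gain $\lambda^{-2}$ rather than $\lambda^{-1}$ from $[\beta,(-\Delta)^{\alpha/2}]Tr$ — which is exactly where the restriction $\alpha<\tfrac12$ is used, both through the multiplier bound and through finiteness of $\int_\Omega\rho^{-2\alpha}$.
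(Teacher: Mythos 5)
Your proposal is correct and follows essentially the same route as the paper: the decomposition $e_{\lambda_n}=\tilde e_{\lambda_n}+r$ using Kwa\'snicki's approximation lemmas, the sharpened commutator bound $\|[\beta,(-\Delta)^{\alpha/2}]r\|_2\ls\lambda^{-2}$ obtained by splitting $r$ into $\lambda^{-1}(-\Delta)^{\alpha/2}r$ plus the pieces of $Tr$ supported in $\Omega$ and in $\Omega^c$, and the resolvent scheme of Section~2 with $z=\lambda+i\lambda^{-1}$ giving $\|QT_0\|_{2\to\infty}\ls\lambda^{1/(2\alpha)}$. The only deviations are harmless: you invoke Kwa\'snicki directly at $\alpha=\tfrac12$ where the paper reruns the argument with the trivial commutator bound and $z=\lambda+i$, and your Duhamel estimate $\|h\|_\infty\ls\lambda^{-1-1/\alpha}$ is sharper than needed ($O(1)$ suffices, as in the paper, since $\|\tilde e_{\lambda_n}\|_\infty\ls1$ is added back at the end anyway).
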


Here \eqref{Linftybd2} follows from \eqref{d12}. Note that \[\frac1{2\alpha}-2>\frac1{3\alpha}-\frac23 \iff \alpha<\frac18.\] So \eqref{Linftybd0} and \eqref{Linftybd1} give better bounds when $\alpha\ge\frac18$.

\begin{proof}We only need to consider $\alpha\le \frac12$. Let $u:=e_{\lambda_n}-\tilde e_{\lambda_n}$. By using \eqref{L2err} we have
\[\|u\|_2\ls \lambda_n^{-1}.\]It suffices to estimate $\|u\|_{L^\infty(K)}$ for $K\subset\subset \Omega$. Let $v=\beta  u$, $v=u$ on $K_1\supset\supset K$.  Here $\beta$ is the same smooth cut-off function defined in Section 2. For large $n$,
\begin{align*}(-\Delta)^{\alpha/2}v&=\beta(-\Delta)^{\alpha/2}(e_{\lambda_n}-\tilde e_{\lambda_n})+[\beta,(-\Delta)^{\alpha/2}](e_{\lambda_n}-\tilde e_{\lambda_n})\\
&=\lambda_n\beta e_{\lambda_n}-\mu_n^\alpha\beta\tilde e_{\lambda_n}+[\beta,(-\Delta)^{\alpha/2}](e_{\lambda_n}-\tilde e_{\lambda_n})+R_0\\
&=\lambda_n\beta(e_{\lambda_n}-\tilde e_{\lambda_n})+[\beta,(-\Delta)^{\alpha/2}](e_{\lambda_n}-\tilde e_{\lambda_n})+R_1\\
&=\lambda_n v+[\beta,(-\Delta)^{\alpha/2}]u+R_1
\end{align*}
where $|R_0|\ls \frac1n$ by \eqref{apperr}, and $|R_1|\ls \frac1n$ by using \eqref{phitilde} and \eqref{eigenvalue}. Fix  $z=\lambda_n+\lambda_n^{-1}i$. Thus
\begin{equation}\label{inverse}
  v=((-\Delta)^{\alpha/2}-z)^{-1}((\lambda_n-z)v+[\beta,(-\Delta)^{\alpha/2}]u+R_1)
\end{equation}

Then for $x\in K\subset\subset K_1$ and $t=\lambda_n^{-1}$
\begin{align*}
  e^{-1}|u(x)|&=|P_t^{\Omega}u(x)+R_2(x)|\le |\int_{K_1}p_\Omega(t,x,y)u(y)dy|+|\int_{\Omega\setminus K_1}p_\Omega(t,x,y)u(y)dy|+ \|R_2\|_\infty\\
  &\ls |\int_{K_1}p_\Omega(t,x,y)v(y)dy|+\|u\|_2+\|R_2\|_\infty\\
  :&=|Qv(x)|+1
\end{align*}
where $R_2(x)=P_t^\Omega \tilde e_{\lambda_n}(x)-\tilde e_{\lambda_n}(x)$ satisfies $\|R_2\|_\infty\ls 1$. As before,  we may write $v=T_0w+Rw$, where $w=(\lambda_n-z)v+[\beta,(-\Delta)^{\alpha/2}]u+R_1$.
When $z=\lambda_n+\lambda_n^{-1}i$, we have
\[\|T_0\|_{2\to \infty}\ls |z|^{\frac{1}{2\alpha}-1}dist((\tfrac{z}{|z|})^{2/\alpha},[0,\infty))^{-\frac12}\approx \lambda_n^{\frac{1}{2\alpha}}\]
Hence
\[\|QT_0\|_{2\to\infty}\ls \lambda_n^{\frac{1}{2\alpha}}\]
Moreover, by \eqref{QR}
\[\|QR\|_{2\to\infty}\ls \lambda_n^{\frac1{2\alpha}-1}.\]
Then using \eqref{eigenvalue}--\eqref{L2err}, we claim that for $\alpha<\frac12$
\begin{equation}\label{claim}
  \|[\beta,(-\Delta)^{\alpha/2}]u\|_2\ls \lambda_n^{-2}.
\end{equation}
Indeed, we may write $u=\lambda_n^{-1}(-\Delta)^{\alpha/2}u+T_1u+T_2u$ where
\[T_1u=u-\lambda_n^{-1}\textbf{1}_{\Omega}(-\Delta)^{\alpha/2}u\]
\[T_2u=-\lambda_n^{-1}\textbf{1}_{\Omega^c}(-\Delta)^{\alpha/2}u.\]
Then for $\alpha<\frac12$, we have
\begin{equation*}
\|[\beta,(-\Delta)^{\alpha/2}](\lambda_n^{-1}(-\Delta)^{\alpha/2}u)\|_2\ls \lambda_n^{-1}\|(-\Delta)^{\alpha/2}u\|_{H^{\alpha-1}}\ls \lambda_n^{-1}\|u\|_{H^{2\alpha-1}}\ls \lambda_n^{-1}\|u\|_2\ls \lambda_n^{-2}
\end{equation*}
and
\begin{align*}
\|[\beta,(-\Delta)^{\alpha/2}](T_1u)\|_2\ls\|T_1u\|_{H^{\alpha-1}}\le\|T_1u\|_{L^2(\Omega)}&=\lambda_n^{-1}\|(-\Delta)^{\alpha/2}\tilde\varphi_n-\lambda_n\tilde\varphi_n\|_{L^2(\Omega)}\\
&\ls\lambda_n^{-1}\lambda_n^{-1/\alpha}\le\lambda_n^{-2}\end{align*}
Observing that $T_2u$ is supported on $\Omega^c$, and supp $\beta$ $\subset\subset \Omega$, we get
\begin{align*}
  |[\beta,(-\Delta)^{\alpha/2}](T_2u)(x)|&=|\beta(x)(-\Delta)^{\alpha/2}T_2u(x)|\\
  &=c_{1,\alpha}|\beta(x)\int\frac{(T_2u)(x)-(T_2u)(y)}{|x-y|^{1+\alpha}}dy|\\
  &=c_{1,\alpha}|\beta(x)\int\frac{(T_2u)(y)}{|x-y|^{1+\alpha}}dy|\\
  &\ls |\beta(x)|\int_{\Omega^c}|T_2u(y)|dy\\
  &\le\lambda_n^{-1}|\beta(x)|\int_\Omega \int_{\Omega^c}\frac1{|z-y|^{1+\alpha}}dy|u(z)| dz\\
  &\ls \lambda_n^{-1}|\beta(x)|\int_\Omega\rho(z)^{-\alpha}|u(z)|dz\\
  &\ls \lambda_n^{-1}|\beta(x)|(\int_\Omega\rho(z)^{-2\alpha}dz)^\frac12\|u\|_2\\
  &\ls \lambda_n^{-2}|\beta(x)|\ \ \ \ (using \ \alpha<\tfrac12\ and\ \eqref{L2err})
\end{align*}
So
\[\|[\beta,(-\Delta)^{\alpha/2}](T_2u)\|_2\ls \lambda_n^{-2}.\]
Then the claim \eqref{claim} is proved.
Then we get for $\alpha<\frac12$
\[\|w\|_2\ls \lambda_n^{-2}\]
since $|R_1|\ls \frac1n\approx \lambda_n^{-1/\alpha}\le \lambda_n^{-2}$. Then for $\alpha<\frac12$
\[\|u\|_{L^\infty(K)}\ls 1+(\lambda_n^{\frac1{2\alpha}}+\lambda_n^{\frac1{2\alpha}-1})\|w\|_2\ls 1+\lambda^{\frac1{2\alpha}-2}\]
which is uniformly bounded when $\frac14\le \alpha<\frac12$.

When $\alpha=\frac12$,  the uniform bound can be obtained by using the trivial commutator estimate
\[\|[\beta,(-\Delta)^{\alpha/2}]u\|_2\ls \|u\|_{H^{\alpha-1}}\ls \|u\|_2\ls\lambda_n^{-1}\]
and setting $z=\lambda_n+i$. Then
\[\|w\|_2\ls \lambda_n^{-1}\]
and when $\alpha=\frac12$
\[\|u\|_{L^\infty(K)}\ls 1+(\lambda_n^{\frac1{2\alpha}}+\lambda_n^{\frac1{2\alpha}-1})\|w\|_2\ls 1+\lambda_n^{\frac1{2\alpha}-1}\approx1.\]

Consequently, for $\frac14\le \alpha<2$ we have the uniform bound
\[\|e_{\lambda_n}\|_{L^\infty(K)}\ls 1.\]
Moreover, for $0<\alpha<\frac14$  we have
\[\|e_{\lambda_n}\|_{L^\infty(K)}\ls \lambda_n^{\frac1{2\alpha}-2}.\]
\end{proof}
\subsection{$L^p$ bounds}

\begin{theorem} Let $0< \alpha<\frac14$, $\Omega=(-1,1)$, and $p_1=\frac{2(1-2\alpha)}{1-4\alpha}$. Let $K\subset\subset \Omega$ be a compact set. Then for $\lambda>1$ we have
\[\|e_{\lambda}\|_{L^p(K)}\ls \|e_{\lambda}\|_2,\ 2\le p\le p_1.\]
Moreover, when $\frac18\le \alpha<\frac14$,
\[\|e_{\lambda}\|_{L^p(K)}\ls \lambda^{\frac{1-4\alpha}{2\alpha}(1-\frac{p_1}{p})}\|e_{\lambda}\|_2,\  p_1<p\le\infty.\]
When $0<\alpha<\frac18$,
  \[\|e_{\lambda}\|_{L^p(K)}\ls \lambda^{\frac{1-2\alpha}{3\alpha}(1-\frac{p_1}{p})}\|e_{\lambda}\|_2,\  p_1<p\le\infty.\]
The constants may depend on $\alpha$ and $K$, but it is independent of $\lambda$.
\end{theorem}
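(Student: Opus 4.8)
The plan is to extract from the argument one new estimate --- the uniform bound $\|e_\lambda\|_{L^p(K)}\ls\|e_\lambda\|_2$ for $2\le p\le p_1$ --- and then obtain the range $p_1<p\le\infty$ purely by interpolation. Granting that uniform bound, for $p_1<p\le\infty$ I would set $\theta=1-\frac{p_1}{p}$ so that $\frac1p=\frac{1-\theta}{p_1}$; H\"older's inequality then gives $\|e_\lambda\|_{L^p(K)}\le\|e_\lambda\|_{L^{p_1}(K)}^{1-\theta}\|e_\lambda\|_{L^\infty(K)}^{\theta}$, and substituting the $L^\infty(K)$ bounds already proved, namely \eqref{Linftybd1} for $\frac18\le\alpha<\frac14$ and \eqref{Linftybd2} for $0<\alpha<\frac18$, together with $\frac1{2\alpha}-2=\frac{1-4\alpha}{2\alpha}$ and $\frac1{3\alpha}-\frac23=\frac{1-2\alpha}{3\alpha}$, yields exactly the two claimed exponents $\frac{1-4\alpha}{2\alpha}(1-\frac{p_1}{p})$ and $\frac{1-2\alpha}{3\alpha}(1-\frac{p_1}{p})$ (this is a genuine improvement of \eqref{d12} since $p_1>2$). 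As the restricted fractional Laplacian on $\Omega=(-1,1)$ has discrete spectrum and smooth interior eigenfunctions, it is enough to prove the uniform bound for $\lambda=\lambda_n$ with $n$ large --- the finitely many remaining eigenvalues being absorbed into the constant --- and we normalize $\|e_{\lambda_n}\|_2=1$.

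For the uniform bound I would re-run the approximation argument behind \eqref{Linftybd1}--\eqref{Linftybd2}, changing only the target exponent. Put $u=e_{\lambda_n}-\tilde e_{\lambda_n}$; then $\|u\|_2\ls\lambda_n^{-1}$ by \eqref{L2err} while $\|\tilde e_{\lambda_n}\|_\infty\ls1$ by \eqref{phitilde}, so (as $\Omega$ is bounded) the task reduces to bounding $\|u\|_{L^p(K)}$. With $v=\beta u$ and $z=\lambda_n+\lambda_n^{-1}i$, the same heat-kernel and resolvent-identity manipulations give $e^{-1}|u(x)|\ls|Qv(x)|+1$ for $x\in K$ and $v=(T_0+R)w$ with $w=(\lambda_n-z)v+[\beta,(-\Delta)^{\alpha/2}]u+R_1$, and --- exactly as there, using \eqref{claim}, $\|(\lambda_n-z)v\|_2\ls\lambda_n^{-2}$, and $|R_1|\ls\lambda_n^{-1/\alpha}\le\lambda_n^{-2}$ (valid since $\alpha<\frac12$) --- the crucial smallness $\|w\|_2\ls\lambda_n^{-2}$. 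Thus
\[\|u\|_{L^p(K)}\ls1+\bigl(\|QT_0\|_{2\to p}+\|QR\|_{2\to p}\bigr)\|w\|_2.\]

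The one new computation is the $L^2\to L^p$ norm of $T_0$. Since $d=1$ forces $p_c=\infty$, Lemma \ref{lemma1} applies for every $p\in[2,\infty]$; using $|z^{2/\alpha}|\approx\lambda_n^{2/\alpha}$ and $dist\bigl((\tfrac{z}{|z|})^{2/\alpha},[0,\infty)\bigr)\approx\lambda_n^{-2}$ it gives
\[\|T_0\|_{2\to p}\approx|z|^{\frac1\alpha(\frac12-\frac1p)-1}\,dist\bigl((\tfrac{z}{|z|})^{2/\alpha},[0,\infty)\bigr)^{(\frac12-\frac1p)-1}\approx\lambda_n^{\,1+(\frac1\alpha-2)(\frac12-\frac1p)}.\]
Combining with $\|Q\|_{p\to p}\ls1$ from \eqref{Q} (valid as $p\ge2$) and with \eqref{QR},
\[\|QT_0\|_{2\to p}\|w\|_2\ls\lambda_n^{\,-1+(\frac1\alpha-2)(\frac12-\frac1p)},\qquad\|QR\|_{2\to p}\|w\|_2\ls\lambda_n^{\,\frac1\alpha(\frac12-\frac1p)-3}.\]
Both exponents are increasing in $p$; a short computation (using $\frac12-\frac1{p_1}=\frac{\alpha}{1-2\alpha}$) shows the first vanishes exactly at $p=p_1=\frac{2(1-2\alpha)}{1-4\alpha}$ and the second is strictly negative there because $\alpha<\frac14$. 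Hence both products are $\ls1$ throughout $2\le p\le p_1$, so $\|u\|_{L^p(K)}\ls1$ and $\|e_{\lambda_n}\|_{L^p(K)}\le\|u\|_{L^p(K)}+\|\tilde e_{\lambda_n}\|_{L^p(K)}\ls1$ on that range; the interpolation of the first paragraph then completes the theorem.

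There is no conceptual obstacle beyond the $L^\infty$ case; the real work is the bookkeeping. The point to verify with care is that, for the choice $z=\lambda_n+\lambda_n^{-1}i$, every error term --- $QR$, $R_1$, and the commutator $[\beta,(-\Delta)^{\alpha/2}]u$ --- remains subordinate to $\lambda_n^{2}\|w\|_2$ uniformly over $2\le p\le p_1$, and that the precise $p$ at which the growth $\|T_0\|_{2\to p}\approx\lambda_n^{1+(\frac1\alpha-2)(\frac12-\frac1p)}$ is matched by the smallness $\|w\|_2\approx\lambda_n^{-2}$ is $p=p_1$ --- which is exactly the source of the critical exponent appearing in the statement. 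One should also record that $p_1>2$ for all $\alpha\in(0,\frac14)$, which is what makes $Q$ bounded on $L^p$ and keeps the interpolation endpoints correctly ordered.
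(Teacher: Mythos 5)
Your proposal is correct and follows essentially the same route as the paper's own (sketched) proof: the same choice $z=\lambda_n+\lambda_n^{-1}i$, the same use of the approximation results, \eqref{claim}, and the resolvent/heat-kernel decomposition to get $\|w\|_2\ls\lambda_n^{-2}$, the same computation showing $\|QT_0\|_{2\to p}\|w\|_2\ls 1$ precisely for $p\le p_1$ (your exponent $1+(\tfrac1\alpha-2)(\tfrac12-\tfrac1p)$ agrees with the paper's $\tfrac1\alpha(\tfrac12-\tfrac1p)+\tfrac2p$), and the same final interpolation with \eqref{Linftybd1}--\eqref{Linftybd2}. No gaps.
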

This theorem gives better bounds than \eqref{d12}. The idea is essentially the same as before. We only sketch the proof here. Let $p_o=\frac2{1-2\alpha}$. Fix $z=\lambda_n+\lambda_n^{-1}i$. Then
\[\|T_0\|_{2\to p}\ls |z|^{\frac{1}{\alpha}(\frac12-\frac1p)-1}dist((\tfrac{z}{|z|})^{2/\alpha},[0,\infty))^{-\frac12-\frac1p}\approx \lambda_n^{\frac1\alpha(\frac12-\frac1p)+\frac2p}\]
Then
\[\|QT_0\|_{2\to p}\ls \lambda_n^{\frac1\alpha(\frac12-\frac1p)+\frac2p}.\]
Moreover, by \eqref{QR}
\[\|QR\|_{2\to p}\ls \lambda_n^{\frac{1}{\alpha}(\frac12-\frac1p)-1}.\]
 Using \eqref{claim}, we get
\[\|u\|_{L^p(K)}\ls 1+(\lambda_n^{\frac1\alpha(\frac12-\frac1p)+\frac2p}+\lambda_n^{\frac{1}{\alpha}(\frac12-\frac1p)-1})\|w\|_2\ls 1+\lambda_n^{\frac1\alpha(\frac12-\frac1p)+\frac2p-2},\ 2\le p\le\infty\]
which is uniformly bounded when $p\le  p_1:=\frac{2(1-2\alpha)}{1-4\alpha}$. Interpolating the $L^{p_1}$ bound with the $L^\infty$ bound in \eqref{Linftybd1} (or \eqref{Linftybd2}), we obtain the $L^p$ bounds in the theorem.

\begin{remark}{\rm It is natural to ask whether the commutator estimate \eqref{claim} can be improved. If one can prove for $\alpha<1/4$
\[ \|[\beta,(-\Delta)^{\alpha/2}]u\|_2\ls \lambda_n^{-\frac1{2\alpha}}\]
then by setting $z=\lambda_n+\lambda_n^{1-\frac1{2\alpha}}i$ one can get
\[\|w\|_2\ls \lambda_n^{-\frac1{2\alpha}}\] which implies the uniform bound for $\|e_\lambda\|_{L^\infty(K)}$ when $0<\alpha<1/4$. But in the method described above, it seems difficult to improve the estimates $$\|[\beta,(-\Delta)^{\alpha/2}](\lambda_n^{-1}(-\Delta)^{\alpha/2}u)\|_2\ls \lambda_n^{-2},$$
$$\|[\beta,(-\Delta)^{\alpha/2}](T_2u)\|_2\ls \lambda_n^{-2},$$ due to the approximation error estimate \eqref{L2err} for $\|u\|_2=\|e_{\lambda_n}-\tilde e_{\lambda_n}\|_2$. Thus it suffices to improve \eqref{L2err} to
\begin{equation}\label{besterr}
  \|e_{\lambda_n}-\tilde e_{\lambda_n}\|_2\ls \frac1{\sqrt{n}}\approx \lambda_n^{-\frac1{2\alpha}}.
\end{equation}
Note that it has been proved for $\alpha\ge1/2$ in Proposition 1 in \cite{kwasnicki}, but unknown for $\alpha<1/2$.
}\end{remark}

\section*{Acknowledgments}
Y.S. is partly supported by the Simons Foundation.

\bibliography{biblio}
\bibliographystyle{alpha}
\medskip

\small

\begin{center}
 Department of Mathematics \\
Johns Hopkins University \\
3400 N. Charles Street \\
Baltimore, MD 21218\\
xhuang49@jhu.edu,\,\,\,sire@math.jhu.edu,\,\,\,czhang67@jhu.edu
\end{center}

\end{document}